\documentclass[reqno,english]{amsart}

\usepackage{amsmath,amssymb,verbatim,amsthm,mathrsfs,mathtools,hyperref}
\usepackage[capitalize]{cleveref}
\usepackage{tikz}
\usetikzlibrary{calc}
\usepackage{caption}

\newtheorem{thm}{Theorem}[section]
\newtheorem{prop}[thm]{Proposition}
\newtheorem{lem}[thm]{Lemma}

\newtheorem{fact}[thm]{Fact}

\newtheorem{quest}[thm]{Question}
\theoremstyle{definition}
\newtheorem{defn}[thm]{Definition}
 
\theoremstyle{remark}

\newcommand{\e}{\varepsilon}

\newcommand{\Rb}{\mathbb{R}}

\newcommand{\Nb}{\mathbb{N}}
\newcommand{\Zb}{\mathbb{Z}}

\newcommand{\Lc}{\mathcal{L}}
\newcommand{\Uc}{\mathcal{U}}

\newcommand{\Ds}{\mathscr{D}}

\newcommand{\abar}{\bar{a}}
\newcommand{\xbar}{\bar{x}}

\DeclareMathOperator{\tp}{tp}

\let\int\relax
\DeclareMathOperator{\int}{int}

\newcommand{\AND}{\text{\textbf{\itshape\&}}}

\DeclareMathOperator{\Th}{Th}

\def\Ind{\setbox0=\hbox{$x$}\kern\wd0\hbox to 0pt{\hss$\mid$\hss}
  \lower.9\ht0\hbox to 0pt{\hss$\smile$\hss}\kern\wd0}

\def\Notind{\setbox0=\hbox{$x$}\kern\wd0\hbox to 0pt{\mathchardef
    \nn=12854\hss$\nn$\kern1.4\wd0\hss}\hbox to
  0pt{\hss$\mid$\hss}\lower.9\ht0 \hbox to 0pt{\hss$\smile$\hss}\kern\wd0}

\def\nind{\mathop{\mathpalette\Notind{}}}

\DeclareMathOperator{\acl}{acl}
\DeclareMathOperator{\cl}{cl}

\newcommand{\res}{{\upharpoonright}}

\def\X{\mathbb{X}}
\def\Y{\mathbb{Y}}
\def\Wb{\mathbb{W}}
\def\Db{\mathbb{D}}

\makeatletter
\renewcommand\paragraph{\@startsection{paragraph}{4}{\z@}% 
  {2ex \@plus1ex \@minus.2ex}% 
  {-1em}% 
  {\normalfont\normalsize\textsc}} % 
\renewcommand\subparagraph{\@startsection{subparagraph}{5}{\parindent}% 
  {0ex \@plus1ex \@minus .2ex}% 
  {-1em}% 
  {\normalfont\normalsize\itshape}}
\makeatother

\newcommand{\To}{\Rightarrow}

\newcommand{\dinf}{d^{\inf}}

\begin{document}

\title{Some semilattices of definable sets in continuous logic}
\address{Department of Mathematics\\
  University of Maryland\\
  College Park, MD 20742, USA}
\author{James Hanson}
\email{jhanson9@umd.edu}
\date{\today}

\keywords{semilattices, definable sets, continuous logic, topometric spaces}
\subjclass[2020]{03C66}

\begin{abstract}
  In continuous first-order logic, the union of definable sets is definable but generally the intersection is not. This means that in any continuous theory, the collection of $\varnothing$-definable sets in one variable forms a join-semilattice under inclusion that may fail to be a lattice. We investigate the question of which semilattices arise as the collection of definable sets in a continuous theory. We show that for any non-trivial finite semilattice $L$ (or, equivalently, any finite lattice $L$), there is a superstable theory $T$ whose semilattice of definable sets is $L$. We then extend this construction to some infinite semilattices. In particular, we show that the following semilattices arise in continuous theories: $\alpha+1$ and $(\alpha+1)^\ast$ for any ordinal $\alpha$, a semilattice containing an exact pair above $\omega$, and the lattice of filters in $L$ for any countable meet-semilattice $L$. By previous work of the author, this establishes that these semilattices arise in stable theories \cite{2021arXiv210613261H}. The first two are done in languages of cardinality $\aleph_0 + |\alpha|$, and the latter two in countable languages.
\end{abstract}

\maketitle

\section*{Introduction}
\label{sec:intro}

Continuous first-order logic, introduced in its modern form in \cite{MTFMS}, is a generalization of ordinary first-order logic that deals with structures comprising complete metric spaces and uniformly continuous predicates and functions, called \emph{metric structures}. % 

In a metric structure $M$, a closed set $D \subseteq M$ is \emph{definable} if its distance predicate $\inf\{d(x,y) : y \in D\}$ is equivalent to a formula. (We take formulas to be closed under uniformly convergent limits.) These are precisely the sets that admit relative quantification in the sense that for any formula $\varphi(x,y)$, there is a formula equivalent to $\inf_{y \in D}\varphi(x,y)$. We will conflate a definable set $D \subseteq M$ with the corresponding closed set of types $\{\tp(a) : a \in D^N,~N \succeq M\}$ in $S_1(T)$, and we will abuse terminology by referring to such sets as \emph{definable} as well.

While definable sets are useful things to have, they are not always plentiful. There are non-trivial metric structures $M$ in which the only definable sets (without parameters) are $\varnothing$ and $M$. Similarly, while definable sets in discrete logic enjoy the structure of a Boolean algebra, only the union of two definable sets is guaranteed to be definable in continuous logic. The complement of a definable set is generally not even closed and the intersection of two definable sets may fail to be definable. This means that the partial order of definable subsets of a metric structure is generally only a bounded join-semilattice. In situations like this, logicians typically find irresistible the question of which semilattices arise in this manner. We will address some aspects of this question in this paper.

The easiest restriction to establish is on cardinality. The collection of definable subsets of $M$ is always closed in the Hausdorff metric. This means that it can only ever have the cardinality of a complete metric space. Basic set-theoretic topology then establishes that if $M$ is a metric structure whose language $\Lc$ has cardinality $\kappa$, then the collection $\Ds$ of definable subsets of $M$ has either $|\mathscr{D}| \leq \kappa$ or $|\mathscr{D}| = \kappa^{\aleph_0}$. If $\Lc$ is countable, then $\mathscr{D}$ is a Polish space and $(D,E) \mapsto D\cup E$ is a Borel function, whereby descriptive set theory comes to bear. That said, the author is not aware of any results regarding Polish semilattices that aren't special cases of results regarding Polish partial orders. 

A more specialized question one might ask is whether various model-theoretic properties impose restrictions on the semilattices of definable sets that might appear. Aside from discreteness itself, only a handful of such restrictions are known. In his thesis \cite[Ch.\ 2.4, 2.5, 5.5]{HansonThesis}, the author studied an abundance condition for definable sets and established conditions under which it occurs, which he referred to as \emph{dictionaricness}. While this condition seems to have been identified folklorically, at the time there were no published systematic studies of it. A type space $S_{\xbar}(A)$ is \emph{dictionaric} if it has a basis of definable neighborhoods (where a neighborhood is not required to be open). A theory is dictionaric if all of its type over any set of parameters are dictionaric. There are only three broad classes of theories that are known to be dictionaric: theories whose type spaces are all totally disconnected,\footnote{This occurs if and only if the theory is bi-interpretable with a (possibly many-sorted) discrete theory.} theories for which $S_n(\abar)$ is CB-analyzable for every $n < \omega$ and finite tuple $\abar$, and randomizations (of discrete or continuous theories, regardless of whether the original theory is dictionaric). The second class of theories includes $\omega$-stable theories, implying that all $\omega$-stable theories are dictionaric. % 

On the other hand, the author showed in \cite[Thm.\ 5.2]{2021arXiv210613261H} that every compact topometric space $(X,\tau,d)$ with $d$ open\footnote{$d$ is open if $U^{<r}\coloneqq \bigcup_{x \in U}B_{<r}(x)$ is open for every open $U$ and $r > 0$.} is isometrically homeomorphic (i.e., isomorphic in the sense of topometric spaces) to $S_1(T)$ for a strictly stable theory $T$. This means that stability alone imposes no restrictions on the semilattice of definable sets.

This leaves a gap though. $\omega$-stability imposes radical restrictions. Mere stability imposes none. What, if anything, does superstability impose? Many of the explicitly constructed counterexamples in \cite[Sec.\ C.1]{HansonThesis} are superstable, which bodes ill for the prospect that superstability entails anything in this regard. For instance, there is a weakly minimal theory $T$ with trivial geometry for which $S_1(T)$ is homeomorphic to $[0,1]$ but has only $\varnothing$ and $S_1(T)$ as definable sets.\footnote{$T$ can be taken to be the theory of $\Nb$ with a discrete metric and $\cos(x)$ as a predicate.} Nevertheless, it is entirely possible that there are subtle restrictions on the class of definable sets entailed by superstability.

In this paper we will give evidence that superstability does not entail any such restrictions. We will show that for every non-trivial finite semilattice $L$, there is a weakly minimal theory $T$ with trivial geometry such that the semilattice of definable subsets of $S_1(T)$ is isomorphic to $L$. Note that since finite semilattices are always complete, they are always lattices, so our result shows that the finite semilattices of definable sets are precisely the finite lattices.\footnote{If $T$ is the inconsistent theory, then a pedantic reading of definitions gives that $S_1(T)$ is $\varnothing$, which has the trivial lattice as its semilattice of definable sets.} This does mean that our result is limited in the sense that superstability might impose restrictions on the semilattice of definable sets that are weaker than the restrictions already imposed by finiteness.

\section{Finite semilattices of definable sets}
\label{sec:building-fin-lat}

The following facts will be useful to keep in mind during the construction. Topological operators such as the interior, $\int A$, are always computed in the compact logic topology. We take superscript operators to bind more tightly than prefix and infix operators, so $\int A^{<\e}$ is $\int(A^{<\e})$,  $\cl(A\cap B)^{<\e}$ is $\cl((A\cap B)^{<\e})$, and $A\cap B^{<\e}$ is $A \cap (B^{<\e})$.

\begin{fact}\label{fact:def-set-basic}
  For any type space $S_1(T)$, a closed set $D \subseteq S_1(T)$ is definable if and only if $D \subseteq \int D^{<r}$ for every $r > 0$.

  For any topometric space $X$ and set $Q \subseteq X$, the following are equivalent.
  \begin{enumerate}
  \item $Q \subseteq \int Q^{<r}$ for every $r > 0$.
  \item $Q \subseteq \int Q^{<r}$ for arbitrarily small $r > 0$.
  \end{enumerate}
  If the metric on $X$ is open, then 1 and 2 are also equivalent to 3.
  \begin{enumerate}
\setcounter{enumi}{2}
  \item $Q^{<r}$ is open for every $r > 0$.
  \end{enumerate}
\end{fact}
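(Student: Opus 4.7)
The plan for the first assertion is to reduce to the classical characterization of definability in continuous logic: a closed set $D \subseteq S_1(T)$ is definable iff the distance predicate $p \mapsto d(p,D)$ is continuous (equivalently, equals a formula). The forward direction is immediate: if $d(\cdot, D) = \varphi$ for a continuous formula $\varphi$, then $D^{<r} = \varphi^{-1}[0,r)$ is open and contains $D$, so $D \subseteq \int D^{<r}$. For the reverse direction I would follow the standard approach: use $D \subseteq \int D^{<1/n}$ to extract a decreasing sequence of logic-open neighborhoods $V_n \supseteq D$ with $V_n \subseteq D^{<1/n}$, replace each $V_n$ by a formula $\varphi_n$ vanishing on $D$ and bounded below by a positive constant off $V_n$ (via density of formulas in continuous functions on $S_1(T)$), and assemble the $\varphi_n$ into a uniformly Cauchy sequence whose limit is $d(\cdot, D)$.

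For the second assertion, each equivalence is a brief topometric computation. The implication (1)$\Rightarrow$(2) is trivial. For (2)$\Rightarrow$(1), monotonicity suffices: given $r > 0$, pick $s < r$ with $Q \subseteq \int Q^{<s}$, and since $Q^{<s} \subseteq Q^{<r}$ we get $\int Q^{<s} \subseteq \int Q^{<r}$. The implication (3)$\Rightarrow$(1) is immediate because $Q \subseteq Q^{<r} = \int Q^{<r}$ when $Q^{<r}$ is open.

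The substantive step is (1)$\Rightarrow$(3) when the metric is open. Given $r > 0$, I would show $Q^{<r}$ is open by covering it with open sets of the form $U_\e^{<r-\e}$. For each $\e \in (0,r)$, set $U_\e \coloneqq \int Q^{<\e}$, which by (1) is open and contains $Q$; openness of the metric then makes $U_\e^{<r-\e}$ open, and the triangle inequality gives $U_\e^{<r-\e} \subseteq Q^{<r}$ (if $d(x,u) < r - \e$ and $d(u,q) < \e$ with $u \in U_\e \subseteq Q^{<\e}$ and $q \in Q$, then $d(x,q) < r$). Conversely, every $p \in Q^{<r}$ has a witness $q \in Q$ with $d(p,q) < r$; choosing $\e \in (0, r - d(p,q))$ gives $q \in Q \subseteq U_\e$ together with $d(p,q) < r - \e$, so $p \in U_\e^{<r-\e}$. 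Hence $Q^{<r} = \bigcup_{\e \in (0,r)} U_\e^{<r-\e}$ is open.

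The main obstacle is the reverse direction in part 1, which is the step that genuinely uses continuous-logic machinery (density of formulas in the relevant spaces of continuous functions on $S_1(T)$); everything else is routine topometric bookkeeping.
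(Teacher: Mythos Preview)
Your proposal is correct and, for the equivalences (1)$\Leftrightarrow$(2) and (3)$\Rightarrow$(1), matches the paper verbatim. Your argument for (1)$\Rightarrow$(3) under an open metric is also essentially the paper's: the paper fixes $x\in Q^{<r}$, takes $s$ near $d(x,Q)$, and observes $x\in (\int Q^{<r-s})^{<s}\subseteq Q^{<r}$, which is your $U_\e^{<r-\e}$ with $\e=r-s$; you have simply packaged the same triangle-inequality computation as a union over $\e$ rather than pointwise.

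The one place you diverge is the first assertion. The paper does not prove it at all---it just cites \cite[Prop.~9.19]{MTFMS}---whereas you sketch a direct construction of the distance predicate from a nested sequence of open neighborhoods. Your sketch is viable, though the ``assemble into a uniformly Cauchy sequence'' step deserves a word of care (one typically shows $d(\cdot,D)$ is continuous by combining automatic lower semicontinuity with upper semicontinuity coming from the hypothesis, rather than building the formula by hand). Since type spaces always have open metrics, a cleaner route once you have (1)$\Leftrightarrow$(3) is simply: $D$ definable $\iff$ $d(\cdot,D)$ continuous $\iff$ $D^{<r}$ open for all $r$ $\iff$ condition (1). Either way, your argument and the paper's citation arrive at the same place.
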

\begin{proof}
  The statement regarding definable sets is equivalent to \cite[Prop.\ 9.19]{MTFMS}.
  
  1 clearly implies 2. Assuming 2, then for any $r > 0$, we can find positive $s < r$ such that $Q \subseteq \int Q^{<s}$, but $\int Q^{<s} \subseteq Q^{<s} \subseteq Q^{<r}$, whence $Q \subseteq \int Q^{<r}$.

  3 clearly implies 1 and 2, so assume the metric is open. For any $x \in Q^{<r}$, let $s = d(x,Q) < r$. We now have that $x \in Q^{<s} \subseteq (\int Q^{<r-s})^{<s} \subseteq Q^{<r}$ by the triangle inequality. Since $(\int Q^{<r-s})^{<s}$ is open, $x \in \int Q^{<r}$. Since we can do this for every $x \in Q^{<r}$, $Q^{<r}$ is open.
\end{proof}

Given \cref{fact:def-set-basic}, we will use the following definition.
\begin{defn}\label{defn:def-set-topo}
  In any topometric space $(X,\tau,d)$, a closed set $D \subseteq X$ is \emph{definable} if $D \subseteq \int D^{<r}$ for every $r > 0$.
\end{defn}

In \cref{sec:some-infin-lat}, we will use \cref{defn:def-set-topo} even when $X$ is not compact.

Perhaps generalizing the term `definable set' to arbitrary topometric spaces like this is ill advised, but at the moment it doesn't seem that there are any applications of \cref{defn:def-set-topo} outside of the context of type spaces in continuous logic.

\subsection{Circuitry}
\label{sec:circuitry}

We will start our construction by building the type space $S_1(T)$ as an explicit topometric space. We will then argue that what we have built actually is $S_1(T)$ for some weakly minimal $T$ with trivial geometry. The construction proceeds by building something reminiscent of a logical circuit consisting of `wires' and `gates.' Unfortunately the metaphor is somewhat backwards in that it will make sense to regard a wire as `on' if it is \emph{disjoint} from the definable set in question. 

\begin{defn}\label{defn:crisp-embeddedness}
  Given a topometric space $(X,\tau,d)$, a set $A \subseteq X$ is \emph{crisply embedded in $X$} if $d(a,x) = 1$ for any $a \in A$ and $x \in X \setminus A$. If $\{x\}$ is crisply embedded in $X$, we may also say that the point $x$ is \emph{crisply embedded}.

  A point $a \in X$ is \emph{metrically $\e$-isolated} if $d(a,x) \geq \e$ for any $x \in X \setminus \{a\}$.
\end{defn}

Throughout the paper all metrics will be $[0,1]$-valued. Note that $a$ is crisply embedded if and only if it is metrically $1$-isolated.

\begin{defn}\label{defn:soldering}
  Given topometric spaces $X$ and $Y$, the \emph{coproduct of $X$ and $Y$}, written $X\oplus Y$ is the topometric space with underlying topological space $X \sqcup Y$ where the metric is extended so that $d(x,y) = 1$ for any $x \in X$ and $y \in Y$.

  Given a topometric space $X$ and two crisply embedded points $x$ and $y$, the topometric space produced by \emph{soldering $x$ and $y$ together} is the topometric space whose underlying topological space is $X$ with $x$ and $y$ topologically glued and in which the metric is defined so that $d(z,w)$ is unchanged for any $z$ and $w$ in $X$ with $\{z,w\} \neq \{x,y\}$. Given a finite set of crisply embedded points $X_0 \subseteq X$, we define \emph{soldering together the points of $X_0$} similarly.

  Given two topometric spaces $X$ and $Y$ with crisply embedded $x \in X$ and $y \in Y$, the topometric space produced by \emph{soldering $x$ and $y$ together} is the topometric space produced by soldering $x$ and $y$ together in $X \oplus Y$.
\end{defn}

It is easy to verify that the objects described in \cref{defn:soldering} are in fact topometric spaces.

A fact that we will be frequently use implicitly is this: If $X$ is a metric space and $D,E \subseteq X$, then for any $r > 0$, $(D\cap E)^{<r}\cap E = \bigcup_{x \in D\cap E}B_{<r}^{E}(x)$, where $B_{<r}^E(x)$ is the open ball of radius $r$ around $x$ in the metric space $(E,d)$. In other words, $(D\cap E)^{<r}\cap E$ is $(D\cap E)^{<r}$ `computed in $E$.'  

\begin{lem}\label{lem:soldering}
Let $X$ and $Y$ be topometric spaces. 
\begin{enumerate}
\item $D \subseteq X \oplus Y$ is definable if and only if $D \cap X$ is definable in $X$ and $D\cap Y$ is definable in $Y$.
\item $X \oplus Y$ has an open metric if and only if $X$ and $Y$ have open metrics.
\end{enumerate}

Let  $Z$ be topometric spaces, let $z_0$ and $z_1$ be crisply embedded points in $Z$, let $W$ be $Z$ with $z_0$ and $z_1$ soldered together, let $w \in W$ be the point corresponding to $z_0$ and $z_1$, and let $\pi : Z \to W$ be the quotient map.
\begin{enumerate}
  \setcounter{enumi}{2}
\item For any closed $D \subseteq Z$, $\pi[D] \subseteq W$ is definable if and only if either
  \begin{itemize}
  \item $\{z_0,z_1\}\cap D = \varnothing$ and $D$ is definable or
  \item $\{z_0,z_1\}\cap D \neq \varnothing$ and $D \cup \{z_0,z_1\}$ is definable.
  \end{itemize}
\item $W$ has an open metric if and only if $Z$ has an open metric.
\end{enumerate}

\end{lem}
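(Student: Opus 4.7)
The plan splits naturally: parts (1)--(2) for the coproduct, which are routine, and parts (3)--(4) for soldering, which require more care.

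\textbf{Coproduct.} In $X \oplus Y$, both $X$ and $Y$ are clopen and the cross-distance is identically $1$. So for any $A \subseteq X \oplus Y$ and $r \in (0, 1]$, the neighborhood $A^{<r}$ decomposes as the disjoint clopen union $(A \cap X)^{<r} \sqcup (A \cap Y)^{<r}$ computed inside each summand; interiors and topological openness likewise distribute. For $r > 1$, $A^{<r}$ is the entire space when $A$ is nonempty, so the relevant conditions hold trivially. Combined with the reduction to arbitrarily small $r$ in \cref{fact:def-set-basic}, both (1) and (2) follow.

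\textbf{Soldering.} Two observations drive (3) and (4). First, the identified point $w$ is itself crisply embedded in $W$: for any $u \in W \setminus \{w\}$, its preimage lies in $Z \setminus \{z_0, z_1\}$, so by the crispness of $z_0$ and $z_1$, $d(u, w) = d(u, z_0) = d(u, z_1) = 1$. Second, by lower semicontinuity of the metrics, $\{z_0\}, \{z_1\}, \{w\}$ are closed singletons, so $Z \setminus \{z_0, z_1\}$ is open in $Z$ and $W \setminus \{w\}$ is open in $W$, and $\pi$ restricts to a topometric isomorphism between them.

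For (3), I prove the unified reformulation: \emph{$\pi[D]$ is definable in $W$ iff $D' \coloneqq \pi^{-1}(\pi[D])$ is definable in $Z$}. Since $D'$ equals $D$ when $D \cap \{z_0, z_1\} = \varnothing$ and equals $D \cup \{z_0, z_1\}$ otherwise, this subsumes both cases of the lemma; also $D'$ is closed, so $\pi[D] = \pi[D']$ is closed in $W$. To prove the reformulation, fix $r < 1$ using \cref{fact:def-set-basic}. Crispness then decomposes $(D')^{<r}$ in $Z$ disjointly into the $r$-neighborhood of $D' \setminus \{z_0, z_1\}$ and the set $D' \cap \{z_0, z_1\}$, and $\pi[D']^{<r}$ in $W$ decomposes analogously, with $\pi^{-1}(\pi[D']^{<r}) = (D')^{<r}$. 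Check the definability condition pointwise: on $D' \setminus \{z_0, z_1\}$ it transfers through the topometric isomorphism of open complements; at $z_0, z_1 \in D'$ (the saturated case), the joint condition $z_0, z_1 \in \int (D')^{<r}$ is equivalent to $w \in \int \pi[D']^{<r}$ via the correspondence between saturated $Z$-open sets containing $\{z_0, z_1\}$ and $W$-open sets containing $w$ (one direction uses that the union of separate $Z$-open neighborhoods of $z_0$ and $z_1$ is $\pi$-saturated, so its $\pi$-image is $W$-open; the other direction uses $\pi^{-1}$ of a $W$-open neighborhood of $w$).

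Part (4) runs in parallel: for $r < 1$, openness of $U^{<r}$ for open $U$ transfers between $W$ and $Z$ via the same saturated-open-set correspondence, decomposing cases by whether $U$ contains $w$ (equivalently, whether the corresponding $Z$-open contains $\{z_0, z_1\}$); for $r > 1$, $r$-neighborhoods of nonempty sets are the whole ambient space, hence open. The main technical challenge lies in correctly relating the interior of $(D')^{<r}$ at the crisp points $z_0, z_1$ to the interior of $\pi[D']^{<r}$ at $w$, and the analogous openness point for (4); once the saturated-open-set correspondence is combined with the crispness-driven decomposition, the rest is bookkeeping.
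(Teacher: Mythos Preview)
Your argument is correct and follows the same line as the paper's: the identity $A^{<r} = (A\cap X)^{<r}\cup (A\cap Y)^{<r}$ for $r\leq 1$ handles (1)--(2), and for (3)--(4) both you and the paper exploit crispness of $w$ together with the identification of $W\setminus\{w\}$ with $Z\setminus\{z_0,z_1\}$; your unified reformulation of (3) via $D'=\pi^{-1}(\pi[D])$ is just a clean repackaging of the paper's two-case split.

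One small gap in your sketch of (4): for the direction ``$W$ has open metric $\Rightarrow$ $Z$ has open metric,'' an arbitrary open $V\subseteq Z$ need not be $\pi$-saturated---it may contain exactly one of $z_0,z_1$---so your saturated-open-set correspondence does not apply to it directly. The paper handles this by writing, for $r\leq 1$,
\[
V^{<r} \;=\; (V\cap\{z_0,z_1\})\;\cup\;\pi^{-1}\bigl[\pi[V\setminus\{z_0,z_1\}]^{<r}\bigr]
\]
and observing that $V$ itself is already an open neighborhood of whichever of $z_0,z_1$ it contains, while the second piece is open by the already-established case $V\subseteq Z\setminus\{z_0,z_1\}$. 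This is easy to patch, but your case split as stated (``whether the corresponding $Z$-open contains $\{z_0,z_1\}$'') misses it.
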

\begin{proof}
  1 and 2 follow immediately from the fact that for any positive $r \leq 1$ and any $A \subseteq X \oplus Y$, $A^{<r} = (X\cap A)^{<r} \cup (Y\cap A)^{<r}$.

  For 3, suppose $\pi[D]$ is definable. If $w \notin \pi[D]$, then we have that $\pi[D]$ is definable as a subset of $W\setminus \{w\}$, which is open. Since $\pi \res (Z\setminus \{z_0,z_1\})$ is an isometric homeomorphism, this is enough to establish that $D$ is definable in $Z \setminus \{z_0,z_1\}$ and therefore also in $Z$. Every step in this argument is reversible, so we also have that if $\{z_0,z_1\}\cap D = \varnothing$ and $D$ is definable, then $\pi[D]$ is definable.

  If $w \in \pi[D]$, then $\pi^{-1}[D] = D \cup \{z_0,z_1\}$. For every $r > 0$, we now have that $\pi^{-1}[D^{<r}] = (D\cup \{z_0,z_1\})^{<r}$. Therefore $D \cup \{z_0,z_1\}$ is definable. Again, this argument is reversible, so we have that if $D\cup\{z_0,z_1\}$ is definable, then $\pi[D\cup \{z_0,z_1\}]$ is definable.

  For 4, if $Z$ has an open metric, then for any $U \subseteq W$ and any $r > 0$, we clearly have that $U^{<r} = \pi[\pi^{-1}[U]^{<r}]$ is open. On the other hand, if $W$ has an open metric, then for any $U \subseteq Z\setminus \{z_0,z_1\}$ and any $r > 0$, we have that $U^{<r} = \pi^{-1}[\pi[U]^{<r}]$. If $U$ contains one of $z_0$ and $z_1$, then for any positive $r \leq 1$, we have that $U^{<r} = (U\cap \{z_0,z_1\}) \cup \pi^{-1}[\pi[U\setminus \{z_0,z_1\}]^{<r}]$. Since $U$ is a neighborhood of the points in $U \cap \{z_0,z_1\}$, this set is always open. Furthermore, for any $r > 1$, we have $U^{<r} = Z$. Therefore $Z$ has an open metric.
\end{proof}

Note that \cref{lem:soldering} implies that if $x$ is crisply embedded in $X$ and $y$ is crisply embedded in $Y$ and $W$ is the result of soldering $x$ and $y$ together, then
\begin{itemize}
\item for any closed $D \subseteq W$, $D$ is definable if and only if $D \cap X$ is definable in $X$ and $D \cap Y$ is definable in $Y$ and
\item $W$ has an open metric if and only if $X$ and $Y$ have open metrics.
\end{itemize}

We may write ordered tuples of numbers with angle brackets to avoid confusion with open intervals.

\begin{defn}
  The \emph{AND gate space} is the topometric space $(\AND,\tau,d)$ where $\AND$ is the subset of $\Rb^2$ given by
  \[
    ([-2,1]\times \{-1,1\}) \cup \{\langle x,\pm x \rangle : -1 \leq x \leq 1\} \cup ([0,2]\times \{0\})
  \]
  (see \cref{fig:AND-gate}), $\tau$ is the subspace topology, and $d$ is the unique metric satisfying
  \begin{itemize}
  \item if $\langle x,y \rangle \in ([-2,1]\times \{-1,1\})\cup([1,2]\times \{0\})$ and $\langle z,w \rangle \neq \langle x,y \rangle$ (see \cref{fig:disc-met-region}), then $d(\langle x,y \rangle,\langle z,w \rangle) = 1$,
  \item if $x \neq z$, then $d(\langle x,y \rangle,\langle z,w \rangle) = 1$, and
  \item if $\langle x,y \rangle$ and $\langle z,w \rangle$ are both in the set $\{\langle x, \pm x \rangle : -1 \leq x \leq 1\} \cup ([0,1]\times \{0\})$, $x =z$, and $y \neq w$, then $d(\langle x,y \rangle,\langle z,w \rangle) = \max(|y|,|w|)$.
  \end{itemize}

  The points $\langle -2,1 \rangle$ and $\langle -2,-1 \rangle$ are the \emph{input vertices} of $\AND$, and the point $\langle 2,0 \rangle$ is the \emph{output vertex}.
\end{defn}
Note that the vertices of $\AND$ are crisply embedded.

\begin{figure}
  \captionsetup{width=0.5\textwidth}
  \centering
  \begin{minipage}{0.49\textwidth}
    \centering
    \begin{tikzpicture}
      \draw[line width=5pt,draw=none,line cap=round,rounded corners=1] (-2,1) -- (1,1);     
      \draw[line width=5pt,draw=none,line cap=round,rounded corners=1] (-2,-1) -- (1,-1);
      \draw[line width=5pt,draw=none,line cap=round,rounded corners=1] (1,0) -- (2,0);
      \draw[very thick,rounded corners=1] (-2,1) node[left]{In}-- (-1,1) -- (1,1) -- (-1,-1);
      \draw[very thick,rounded corners=1] (-2,-1) node[left]{In} -- (-1,-1) -- (1,-1) -- (-1,1);
      \draw[very thick,rounded corners=1] (0,0) -- (1,0) -- (2,0) node[right]{Out}; % 
    \end{tikzpicture}
    \caption{$\AND$, the AND gate space}
    \label{fig:AND-gate}
  \end{minipage}
  \begin{minipage}{0.49\textwidth}
    \centering
    \begin{tikzpicture}
      \draw[line width=5pt,opacity=0.5,line cap=round,rounded corners] (-2,1) -- (1,1);     
      \draw[line width=5pt,opacity=0.5,line cap=round,rounded corners=1] (-2,-1) -- (1,-1);
      \draw[line width=5pt,opacity=0.5,line cap=round,rounded corners=1] (1,0) -- (2,0);
      \draw[very thick,rounded corners=1] (-2,1)  node[left]{\phantom{In}}-- (-1,1)  -- (1,1) -- (-1,-1); % 
      \draw[very thick,rounded corners=1] (-2,-1)  node[left]{\phantom{In}} -- (-1,-1) -- (1,-1) -- (-1,1); % 
      \draw[very thick,rounded corners=1] (0,0) -- (1,0) -- (2,0) node[right]{\phantom{Out}}; % 
    \end{tikzpicture}
    \caption{Region with discrete metric}
    \label{fig:disc-met-region}
  \end{minipage}
\end{figure}

\begin{lem}\label{lem:wires}
  Fix a topometric space $X$. For any $\e > 0$, if $U \subseteq X$ is a connected open set such that every $x \in U$ is metrically $\e$-isolated, then for any definable set $D \subseteq X$, either $U \subseteq D$ or $U \cap D = \varnothing$.
\end{lem}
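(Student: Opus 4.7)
The plan is a connectedness argument: I will show that both $U \cap D$ and $U \setminus D$ are open in the subspace $U$, so connectedness of $U$ forces one of them to be empty. The direction $U \setminus D$ being open in $U$ is immediate, since $D$ is closed in $X$ by the definition of a definable set (\cref{defn:def-set-topo}). The substantive step is to show that $U \cap D$ is open in $U$, and this is where the metric $\e$-isolation hypothesis interacts with the definability of $D$.

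For the substantive step, fix $x \in U \cap D$. By \cref{defn:def-set-topo}, $D \subseteq \int D^{<\e}$, so there is an open neighborhood $V$ of $x$ with $V \subseteq D^{<\e}$. I claim $V \cap U \subseteq D$. Given $y \in V \cap U$, the inclusion $V \subseteq D^{<\e}$ yields some $z \in D$ with $d(y,z) < \e$; but $y \in U$ is metrically $\e$-isolated in $X$, which by \cref{defn:crisp-embeddedness} means $d(y,w) \geq \e$ for all $w \in X \setminus \{y\}$. Hence $z = y$, so $y \in D$. Thus $V \cap U$ is an open neighborhood of $x$ in $U$ contained in $U \cap D$, which completes the proof that $U \cap D$ is open in $U$.

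There is no real obstacle: the only slightly subtle point is the upgrade from ``$y$ lies within $\e$ of $D$'' to ``$y \in D$'', which uses the $\e$-isolation of $y$ to collapse the $\e$-ball around $y$ down to the single point $y$.
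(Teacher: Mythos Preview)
Your proof is correct and follows essentially the same approach as the paper: both arguments show $U \cap D$ is relatively clopen in $U$ by using the metric $\e$-isolation to collapse $D^{<r}$ back to $D$ inside $U$, and then invoke connectedness. The paper compresses the neighborhood step into the set equality $U \cap D^{<\frac{1}{2}\e} = U \cap D$ and a sandwich $U \cap D \subseteq U \cap \int D^{<\frac{1}{2}\e} \subseteq U \cap D$, whereas you unpack the same idea pointwise with $\e$ in place of $\frac{1}{2}\e$; the choice of radius is immaterial.
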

\begin{proof}
  Fix a connected open set $U$ and an $\e > 0$ such that every $x \in U$ is metrically $\e$-isolated. Fix a definable set $D$. We have that $U \cap D^{<\frac{1}{2}\e} = U \cap D$, so
  \[
    U \cap D \subseteq U \cap \int D^{<\frac{1}{2}\e} \subseteq U \cap D^{<\frac{1}{2}\e} = U \cap D
  \]
  and $U \cap D$ is open and therefore relatively clopen in $U$. Therefore either $U \cap D = \varnothing$ or $U \cap D = U$, as required. 
\end{proof}

The name of the AND gate space is justified by the following proposition.

\begin{prop}\label{prop:AND-function}
  The only non-empty definable proper subsets of $\AND$ are
  \begin{enumerate}
  \item \label{top-lobe}$\{\langle x,y \rangle \in \AND: x > 0\} \cup \{\langle 0,0 \rangle\}$,
  \item \label{bottom-lobe}$\{\langle x,y \rangle \in \AND: x < 0\} \cup \{\langle 0,0 \rangle\}$,
  \item the union of \ref{top-lobe} and \ref{bottom-lobe}, 
  \item the union of \ref{top-lobe} and $(0,2]\times \{0\}$, and
  \item the union of \ref{bottom-lobe} and $(0,2]\times \{0\}$.
  \end{enumerate}

  In particular, a definable subset $D\subseteq \AND$ is uniquely determined by its intersection with $\{\langle -2,1 \rangle, \langle -2,-1 \rangle,\langle 2,0 \rangle\}$ and the only restriction is that if $\langle -2,1 \rangle \notin D$ and $\langle -2,-1 \rangle \notin D$, then $\langle 2,0 \rangle \notin D$.
\end{prop}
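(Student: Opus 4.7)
My plan is to show that a definable $D \subseteq \AND$ is determined by the three-bit vector recording which of $\langle -2, 1\rangle$, $\langle -2, -1\rangle$, $\langle 2, 0\rangle$ lies in $D$, and that the only forbidden configuration is the one where only the output vertex is in $D$. First, I would apply \cref{lem:wires} to the connected open sets of crisply embedded points inside each wire's discrete region. The two input wires and the right half of the output wire each have such interior components; by the lemma each is either in $D$ or disjoint from $D$, and closedness of $D$ (together with definability at any crisply embedded ``inner'' endpoint) propagates this to the full wire, including its vertex. So each wire carries a well-defined binary state that coincides with whether its vertex lies in $D$.

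Next, I would establish the bridging principle linking each wire to its adjacent continuous region: upper wire $\subseteq D$ iff UL $\subseteq D$ iff UR $\subseteq D$ (where UL, UR denote the half-diagonals from the origin to $\langle -1,1\rangle$ and $\langle 1,1\rangle$), symmetrically for the lower wire (with LL, LR), and output wire $\subseteq D$ iff OL $\subseteq D$ (where OL $=[0,1]\times\{0\}$). For the forward direction, take UR as a representative: $\langle 1,1\rangle$ is crisply embedded and in $D$, so definability demands $\langle 1,1\rangle \in \int D^{<r}$ for every $r>0$; the topological neighborhood must include an arc of UR approaching $\langle 1,1\rangle$, whose points have same-$x$-coordinate partners on LR and OL at metric distance close to $1$, forcing them (for small $r$) to lie in $D$ themselves. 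Thus $D \cap \mathrm{UR}$ has nonempty interior in UR; the same definability argument at any interior $D$-point of UR shows $D \cap \mathrm{UR}$ is also open in UR, hence clopen, hence all of UR by connectedness. The reverse direction follows by applying \cref{lem:wires} at the crisply embedded wire endpoint of the half-diagonal, and the OL-to-output-wire implication works analogously via $\langle 1,0\rangle$.

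Third, I would analyze definability at the crisply embedded origin $\langle 0, 0\rangle$, where five branches UL, UR, LL, LR, OL meet. Definability demands that for every $r>0$ a topological neighborhood lie within metric distance $r$ of $D$; the same-$x$-coordinate structure shows the left pair $\{UL, LL\}$ and the right triple $\{UR, LR, OL\}$ are metrically close only within themselves. So the origin condition splits in two: at least one of UL, LL must be in $D$ near the origin (hence, by step 2's clopen/propagation argument, entirely in $D$), and at least one of UR, LR, OL must be entirely in $D$. Via step 2 these say ``at least one input wire in $D$'' and ``at least one of the three wires in $D$''; the first implies the second, so the origin is in $D$ if and only if at least one input wire is in $D$.

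Finally, the advertised restriction emerges: output wire $\subseteq D$ forces OL $\subseteq D$, hence the origin is in $D$, hence some input wire lies in $D$. This excludes only the configuration with both input vertices out and the output vertex in, leaving seven admissible configurations, corresponding to $\varnothing$, $\AND$, and the five listed sets. Each determines $D$ uniquely by steps 1 and 2, and definability at every point is then a direct check: trivial at non-origin points of each branch (all of which lie in open arcs entirely inside $D$ or entirely outside), and via the same-$x$-coordinate certification at the origin in the cases that need it. I expect step 3 to be the main obstacle, since the metric must be unpacked on all five branches meeting the origin simultaneously, carefully separating the ``left'' and ``right'' conditions and tying them back to the wire states via the propagation of step 2.
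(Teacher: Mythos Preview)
Your proposal is correct and follows essentially the same strategy as the paper: both use \cref{lem:wires} to show each edge/branch is all-in or all-out of $D$, then analyze the crisply embedded corner points $\langle \pm 1,\pm 1\rangle$ (your ``bridging principle'') to link the wires to the diagonals. The only organizational difference is that you carry out an explicit case analysis at the origin to derive the input/output restriction, whereas the paper first concludes that $D$ must be a union of $D_1$, $D_2$, and $[0,2]\times\{0\}$ and then rules out $[0,2]\times\{0\}$ alone by observing its $\frac{1}{2}$-neighborhood is not open---which is the same origin obstruction packaged differently.
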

\begin{proof}
  In order to verify that the listed sets are definable, it is sufficient by symmetry and unions to check it only for 1 and 4. Let $D_1$ be the set described in 1. For any positive $r \leq 1$, we have that $D^{<r}_1$ is $D_1 \cup \{\langle x,-|x| \rangle: |x| < r\} \cup \{\langle x,0 \rangle : x < r \}$, which is an open set. Therefore $D_1$ is definable. If $D_4$ is the set described in 4, then $D^{<r}_4$ is $D^{<r}_1 \cup D_4$, which is also open. Therefore $D_4$ is definable.

  Now suppose that $D \subseteq \AND$ is definable. $\AND$ is a topological graph. By an \emph{edge} of $\AND$, we mean a maximal open subset homeomorphic to $(0,1)$. There are $9$ edges in $\AND$ corresponding to the graph theoretic edges. Each edge $U$ of $\AND$ can be written as a union $\bigcup_{n<\omega} U_n$ of connected open sets such that any $x \in U_n$ is metrically $2^{-n-1}$-isolated. Therefore by \cref{lem:wires} and since each $U$ is connected, we have that either $U \subseteq D$ or $U \cap D = \varnothing$.

  Since $D$ is closed, it must contain the closure of any edge it contains. Now suppose that $D$ contains one of the four points $\langle \pm 1,\pm 1 \rangle$. Call this point $\langle x,y \rangle$ Suppose that $\langle x,y \rangle \notin \int D$. We can then find an $r > 0$ small enough that for some open neighborhood $V \ni \langle x,y \rangle$, $D^{<r}\cap V = D\cap V$, which again is a contradiction.

  What we have established now is enough to show that $D$ must be a (possibly empty) union of $D_1$, $D_2$ (the definable set in 2), and $[0,2]\times \{0\}$. If $D$ is $\varnothing$ or $\AND$ or is on the list in the statement of the proposition, then we are done. The only other possibility is that $D = [0,2]\times \{0\}$, but if this is the case then $D^{<\frac{1}{2}} = D \cup \{\langle z,\pm z \rangle: 0 \leq z < \frac{1}{2}\}$, which is not open. So $D$ cannot be this set and we are done. The `In particular' statement follows immediately.
\end{proof}

\begin{figure}
  \centering
  \begin{tikzpicture}[scale=0.75]
    \begin{scope}[xshift=-165,yshift=0]
      \draw[very thick,rounded corners=1] (-2,1) -- (-1,1) -- (1,1) -- (-1,-1);
      \draw[very thick,rounded corners=1] (-2,-1) -- (-1,-1) -- (1,-1) -- (-1,1);
      \draw[very thick,rounded corners=1] (0,0) -- (1,0) -- (2,0); % 
    \end{scope}
    \begin{scope}[xshift=-55,yshift=0]
      \draw[line width=5pt,opacity=0.5,line cap=round,rounded corners=1] (-2,1) -- (1,1) -- (0,0) -- (-1,1);     
      \draw[very thick,rounded corners=1] (-2,1) -- (-1,1) -- (1,1) -- (-1,-1);
      \draw[very thick,rounded corners=1] (-2,-1) -- (-1,-1) -- (1,-1) -- (-1,1);
      \draw[very thick,rounded corners=1] (0,0) -- (1,0) -- (2,0); % 
    \end{scope}
    \begin{scope}[xshift=55,yshift=0]
      \draw[line width=5pt,opacity=0.5,line cap=round,rounded corners=1] (-2,-1) -- (1,-1) -- (0,0) -- (-1,-1);     
      \draw[line width=5pt,opacity=0,line cap=round,rounded corners=1] (-2,1) -- (1,1);     
      \draw[line width=5pt,opacity=0,line cap=round,rounded corners=1] (-2,-1) -- (1,-1);
      \draw[line width=5pt,opacity=0,line cap=round,rounded corners=1] (1,0) -- (2,0);
      \draw[very thick,rounded corners=1] (-2,1) -- (-1,1) -- (1,1) -- (-1,-1);
      \draw[very thick,rounded corners=1] (-2,-1) -- (-1,-1) -- (1,-1) -- (-1,1);
      \draw[very thick,rounded corners=1] (0,0) -- (1,0) -- (2,0); % 
    \end{scope}
    \begin{scope}[xshift=165,yshift=0]
      \draw[line width=5pt,opacity=0.5,line cap=round,rounded corners=1] (-2,1) -- (1,1) -- (0,0) -- (-1,1) -- (0,0) -- (1,-1) -- (-2,-1) -- (-1,-1) -- (0,0) -- (-1,-1) -- (-2,-1);     
      \draw[very thick,rounded corners=1] (-2,1) -- (-1,1) -- (1,1) -- (-1,-1);
      \draw[very thick,rounded corners=1] (-2,-1) -- (-1,-1) -- (1,-1) -- (-1,1);
      \draw[very thick,rounded corners=1] (0,0) -- (1,0) -- (2,0); % 
    \end{scope}
    \begin{scope}[xshift=-125,yshift=-90]
      \draw[line width=5pt,opacity=0.5,line cap=round,rounded corners=1] (-2,1) -- (1,1) -- (0,0) -- (-1,1) -- (0,0) -- (2,0);     
      \draw[very thick,rounded corners=1] (-2,1) -- (-1,1) -- (1,1) -- (-1,-1);
      \draw[very thick,rounded corners=1] (-2,-1) -- (-1,-1) -- (1,-1) -- (-1,1);
      \draw[very thick,rounded corners=1] (0,0) -- (1,0) -- (2,0); % 
    \end{scope}
    \begin{scope}[xshift=0,yshift=-90]
      \draw[line width=5pt,opacity=0.5,line cap=round,rounded corners=1] (-2,-1) -- (1,-1) -- (0,0) -- (-1,-1) -- (0,0) -- (2,0);     
      \draw[line width=5pt,opacity=0,line cap=round,rounded corners=1] (-2,1) -- (1,1);     
      \draw[line width=5pt,opacity=0,line cap=round,rounded corners=1] (-2,-1) -- (1,-1);
      \draw[line width=5pt,opacity=0,line cap=round,rounded corners=1] (1,0) -- (2,0);
      \draw[very thick,rounded corners=1] (-2,1) -- (-1,1) -- (1,1) -- (-1,-1);
      \draw[very thick,rounded corners=1] (-2,-1) -- (-1,-1) -- (1,-1) -- (-1,1);
      \draw[very thick,rounded corners=1] (0,0) -- (1,0) -- (2,0); % 
    \end{scope}
    \begin{scope}[xshift=125,yshift=-90]
      \draw[line width=5pt,opacity=0.5,line cap=round,rounded corners=1] (-2,1) -- (1,1) -- (0,0) -- (-1,1) -- (0,0) -- (1,-1) -- (-2,-1) -- (-1,-1) -- (0,0) -- (-1,-1) -- (-2,-1) -- (-1,-1) -- (0,0) -- (2,0);     
      \draw[very thick,rounded corners=1] (-2,1) -- (-1,1) -- (1,1) -- (-1,-1);
      \draw[very thick,rounded corners=1] (-2,-1) -- (-1,-1) -- (1,-1) -- (-1,1);
      \draw[very thick,rounded corners=1] (0,0) -- (1,0) -- (2,0); % 
    \end{scope}
  \end{tikzpicture}
  \caption{The definable subsets of $\AND$} % 
  \label{fig:non-triv-def-sets}
\end{figure}

So we see that $\AND$ functions like an AND gate
in the following sense: Given a definable set $D \subseteq \AND$, we think of a vertex as being `on' if it is not contained in the definable set in question. We then have by \cref{prop:AND-function} that if the input vertices are on, the output vertex must be on as well, but there are no other restrictions on the configuration of the gate.

Strictly speaking, real-world AND gates usually don't have specified behavior for states analogous to 1, 2, or 3 in \cref{prop:AND-function}, since normally the output is meant to be thought of as a function of the inputs. This means that there are two ways to interpret what configurations should be possible. Here we have interpreted the operation of an AND gate in an `if, then' manner, where the output is on if the inputs are both on. The other way would be to interpret it in an `if and only if' manner, where the output is on if and only if the inputs are both on. It is actually easier to build a topometric space that accomplishes this, but in the end we would need to implement something that behaves like $\AND$ and the resulting construction is ultimately more complicated.

\subsection{Building type spaces}
\label{sec:enc-fin-lat-typ-spa}

Now we will use $\AND$ to build type spaces with arbitrary finite lattices as their semilattices of definable sets.

\begin{defn}\label{defn:lattice-space}
  For any bounded lattice $L$, write $L^-$ for the set $L \setminus \{1^L\}$ (where $1^L$ is the top element of $L$).

  For a finite lattice $L$, we write $\X(L)$ for the topometric space constructed in the following manner: For each triple $(a,b,c) \in (L^-)^3$ satisfying $a \wedge b \leq c$, take a copy of $\AND$ with the two input vertices labeled $a$ and $b$ and the output vertex labeled $c$. For each $a \in L^-$, solder together all vertices labeled $a$. $\X(L)$ is the resulting space. We write $x_a$ for the point in $\X(L)$ corresponding to the vertices labeled $a$, and we write $N(L)$ for $\{x_a : a \in L^-\}$.
\end{defn}

Note that \cref{defn:lattice-space} will include many unnecessary copies of $\AND$, such as copies corresponding to triples of the form $(a,a,a)$. It is only really necessary to include copies corresponding to some presentation of the lattice. $N_5^-$, for instance, has $42$ ordered triples $( a,b,c )$ satisfying $a\wedge b \leq c$, but only $4$ of these are needed to produce a topometric space whose semilattice of definable sets is isomorphic to $N_5$, as depicted in \cref{fig:N5-space}.  

This example also establishes that even if a type space is `planar' (as in, embeddable in $\Rb^2$) and has finitely many definable sets, the lattice of definable sets might fail to be modular.

\begin{figure}
  \centering
  \begin{tikzpicture}
    \begin{scope}[shift={(-3.75,-2.25)}]
      \draw[very thick] (0,0) node[circle,fill=black,scale=0.6]{} -- (1.75,1.25)  node[circle,fill=black,scale=0.6]{} -- (1.75,3.25)  node[circle,fill=black,scale=0.6]{} -- (0,4.5)  node[circle,fill=black,scale=0.6]{} -- (-2,2.25)  node[circle,fill=black,scale=0.6]{} -- cycle;
    \end{scope}
    \draw[very thick,-stealth] (-0.9,0) -- (0.75,0);

    \begin{scope}[shift={(3.75,-2.25)}]
      \coordinate (root) at (0,0);
      \coordinate (LRN) at (1.75,1.25); % 
      \coordinate (URN) at (1.75,3.25); % 
      \coordinate (LN) at (-2,2.25); % 
      
      \draw[draw=none] (LN) -- (0,0) coordinate[pos=0.42] (LL) -- (LRN) coordinate[pos=0.6] (LR) -- (URN) coordinate[pos=0.55] (UR);

      \draw[very thick] (LL) -- (LN) node[circle,fill=black,scale=0.6]{};

      \draw[very thick] (LR) -- (LRN) node[circle,fill=black,scale=0.6]{};

      \draw[very thick] (UR) -- (URN) node[circle,fill=black,scale=0.6]{};

      \draw (0,0) node[circle,fill=black,scale=0.6]{};

      \coordinate (mid) at ($ (UR) + (-1.75,0.6)$);
      \draw[very thick] (mid) -- (0,0);

      \begin{scope}[shift={(LR)},rotate=35.54]
        \draw[very thick] (-0.3,0.3) -- (0,0) -- (0.3,-0.3) -- (-0.3,-0.3) -- (0.3,0.3) -- cycle;
        \draw[very thick] (-0.3,0.3) .. controls ++(-0.5,0) and ++(0,0) .. (root);
        \draw[very thick] (-0.3,-0.3) .. controls ++(-0.5,0) and ++(0,0) .. (root);
      \end{scope}

      \begin{scope}[shift={(UR)},rotate=90]
        \draw[very thick] (-0.3,0.3) -- (0,0) -- (0.3,-0.3) -- (-0.3,-0.3) -- (0.3,0.3) -- cycle;
        \draw[very thick] (-0.3,0.3) .. controls ++(-0.4,0) and ++(0,0) .. (LRN);
        \draw[very thick] (-0.3,-0.3) .. controls ++(-0.4,0) and ++(0,0) .. (LRN);
      \end{scope}

      \begin{scope}[shift={(LL)},rotate=131.6]
        \draw[very thick] (-0.3,0.3) -- (0,0) -- (0.3,-0.3) -- (-0.3,-0.3) -- (0.3,0.3) -- cycle;
        \draw[very thick] (-0.3,0.3) .. controls ++(-0.5,0) and ++(0,0) .. (root);
        \draw[very thick] (-0.3,-0.3) .. controls ++(-0.5,0) and ++(0,0) .. (root);
      \end{scope}

      \begin{scope}[shift={(mid)},rotate=-90]
        \draw[very thick] (-0.3,0.3) -- (0,0) -- (0.3,-0.3) -- (-0.3,-0.3) -- (0.3,0.3) -- cycle;
        \draw[very thick] (-0.3,0.3) .. controls ++(-1.5,0) and ($ (URN) + (-0.5,-0.1) $) .. (URN);
        \draw[very thick] (-0.3,-0.3) .. controls ++(-1.5,0) and ($ (LN) + (-1.5,0.25) $) .. (LN);
      \end{scope}
    \end{scope}
  \end{tikzpicture}
\caption{$\X(N_5)$ (with unnecessary copies of $\AND$ removed)}
  \label{fig:N5-space}
\end{figure}
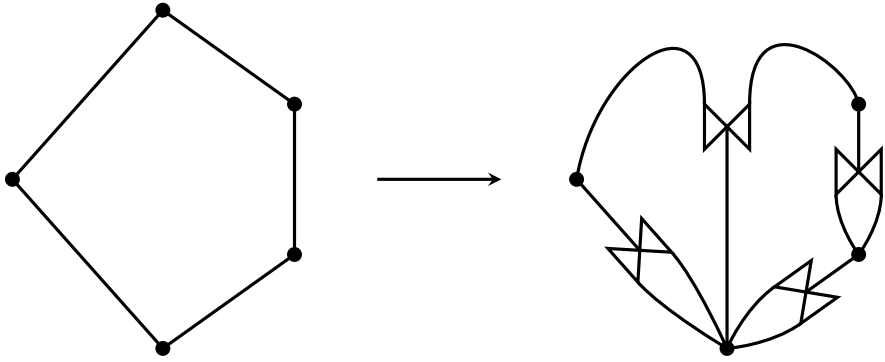

\begin{lem}\label{lem:lattice-circuit}
  Fix a finite lattice $L$ and a definable set $D$.
  \begin{enumerate}
  \item Any definable set $D \subseteq \X(L)$ is uniquely determined by $D \cap N(L)$.
  \item For any $a,b,c \in L^-$ with $a \wedge b \leq c$, if $a\notin D$ and $b \notin D$, then $c \notin D$. In particular, the set $\{a \in L^- : x_a \notin D\}\cup\{1^L\}$ is a filter of the lattice $L$.
  \item For any non-empty filter $F \subseteq L$, there is a definable set $D\subseteq \X(L)$ such that $F = \{a \in L^- : x_a \notin D\}\cup \{1^L\}$. 
  \end{enumerate}
\end{lem}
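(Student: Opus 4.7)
The plan is to reduce everything to the local picture in a single copy of $\AND$ by using \cref{lem:soldering}. Since $\X(L)$ is assembled from finitely many copies of $\AND$ by first taking their coproduct and then iteratively soldering the vertices that share a label in $L^-$, parts (1), (2), (3), (4) of \cref{lem:soldering} together imply that a closed set $D \subseteq \X(L)$ is definable if and only if, for each triple $(a,b,c) \in (L^-)^3$ with $a \wedge b \leq c$, the intersection $D \cap \AND_{(a,b,c)}$ is definable in that copy of $\AND$. The extra $\cup\{z_0,z_1\}$ clauses in \cref{lem:soldering}(3) cause no difficulty because any vertex to be soldered at stage $n$ is by construction identified with some $x_a \in N(L)$, and membership of $x_a$ in $D$ forces both copies of the vertex to belong to $D$ in the unsoldered picture.

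With this reduction in hand, (1) is immediate: by \cref{prop:AND-function}, $D \cap \AND_{(a,b,c)}$ is determined by its intersection with the three labeled vertices of that copy, and these vertices are precisely $x_a, x_b, x_c \in N(L)$. For (2), fix any triple $(a,b,c) \in (L^-)^3$ with $a\wedge b \leq c$; applying \cref{prop:AND-function} to the definable set $D \cap \AND_{(a,b,c)}$ yields that $x_a,x_b \notin D$ forces $x_c \notin D$. To see that $F \coloneqq \{a \in L^- : x_a \notin D\} \cup \{1^L\}$ is a filter, I verify upward closure via triples of the form $(a,a,b)$ for $a \leq b$ with $a,b \in L^-$ (cases with $1^L$ are immediate), and closure under meet via the triple $(a,b,a\wedge b)$, noting that $a,b \in L^-$ implies $a \wedge b \in L^-$ since $a \wedge b \leq a \neq 1^L$.

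For (3), given a non-empty filter $F \subseteq L$, I build $D$ one copy at a time. For each triple $(a,b,c) \in (L^-)^3$ with $a \wedge b \leq c$, I want $D \cap \AND_{(a,b,c)}$ to be the unique definable subset of $\AND_{(a,b,c)}$ whose intersection with the three labeled vertices is $\{x_u : u \in \{a,b,c\} \setminus F\}$. The AND constraint from \cref{prop:AND-function} demands that if $a,b \in F$ then $c \in F$; this holds since $a,b \in F$ gives $a \wedge b \in F$, and $a \wedge b \leq c$ combined with upward closure of $F$ gives $c \in F$. Taking the union of these (finitely many) pieces produces a closed $D \subseteq \X(L)$ that restricts to a definable set on each $\AND_{(a,b,c)}$ and therefore is itself definable by the reduction. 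By construction, $\{a \in L^- : x_a \notin D\} = F \setminus \{1^L\}$, as required.

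The only step that requires care is the bookkeeping at the start: making sure that the characterization of definability in $\X(L)$ by local definability on each $\AND_{(a,b,c)}$ survives the iterated soldering. Everything else is a direct transcription of \cref{prop:AND-function} into the combinatorics of $L$, and the equivalence between the AND constraint on the three vertices of one gate and the defining properties of a filter is exactly what makes the gate-based construction work.
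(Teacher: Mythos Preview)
Your proof is correct and follows essentially the same approach as the paper: reduce to individual copies of $\AND$ via \cref{lem:soldering}, then invoke \cref{prop:AND-function}. You supply more detail than the paper does---in particular the explicit verification of the filter axioms in (2) via the triples $(a,a,b)$ and $(a,b,a\wedge b)$, and the bookkeeping for iterated soldering---but the underlying argument is the same.
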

\begin{proof}
  1 and 2 follow from \cref{lem:soldering} and \cref{prop:AND-function}.

  For 3, given a non-empty filter $F$, let $D \subseteq \X(L)$ be the unique set satisfying $D \cap N(L) = \{x_a : a \in F^-\}$ such that for each copy $A$ of $\AND$ in $\X(L)$, $D\cap A$ is definable in $A$. This set is definable by \cref{lem:soldering} and \cref{prop:AND-function}.
\end{proof}

\begin{prop}\label{prop:XL-lattice-iso}
  For any finite lattice $L$, the semilattice of definable sets in $\X(L)$ is isomorphic to $L$.
\end{prop}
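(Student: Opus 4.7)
The plan is to assemble \cref{lem:lattice-circuit} into an explicit isomorphism by routing through the lattice of filters of $L$. First I would define a map $\Phi : \Ds(\X(L)) \to \mathrm{Filt}(L)$ by $\Phi(D) = \{a \in L^- : x_a \notin D\} \cup \{1^L\}$, where $\mathrm{Filt}(L)$ denotes the collection of non-empty filters of $L$ ordered by inclusion. The three parts of \cref{lem:lattice-circuit} immediately yield that $\Phi$ is well-defined (the image is always a filter), injective (a definable set is determined by its intersection with $N(L)$), and surjective onto $\mathrm{Filt}(L)$ (every non-empty filter is realized, including $\{1^L\}$, which corresponds to $D = \X(L)$). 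The map $\Phi$ is inclusion-reversing by construction, and satisfies $\Phi(D \cup E) = \Phi(D) \cap \Phi(E)$ because $x_a \notin D \cup E$ iff $x_a \notin D$ and $x_a \notin E$.

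Next I would use finiteness of $L$ to identify $\mathrm{Filt}(L)$ with $L$ itself. Since $L$ is finite, every filter $F \subseteq L$ is closed under the meet of all its elements, so it has a minimum $\min F \in F$, and $F = \{b \in L : b \geq \min F\}$. Thus $F \mapsto \min F$ is an order-reversing bijection $\mathrm{Filt}(L) \to L$, and intersection of filters corresponds to join in $L$, since $\{b : b \geq a_1\} \cap \{b : b \geq a_2\} = \{b : b \geq a_1 \vee a_2\}$.

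Composing, the map $D \mapsto \min \Phi(D)$ is an order-preserving bijection from $\Ds(\X(L))$ to $L$ which sends $D \cup E$ to $\min \Phi(D) \vee \min \Phi(E)$, and so is a join-semilattice isomorphism (hence also a lattice isomorphism, since both sides are finite). There is no substantive obstacle: the real work lies in \cref{prop:AND-function} and \cref{lem:lattice-circuit}, and this proposition is merely the bookkeeping that packages those results into the stated conclusion.
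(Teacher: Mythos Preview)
Your proposal is correct and takes essentially the same approach as the paper: both route through the lattice of filters on $L$, invoke \cref{lem:lattice-circuit} for well-definedness, injectivity, and surjectivity, and use finiteness of $L$ to identify filters with their minima. The only cosmetic difference is direction---the paper builds the map $a \mapsto D_a$ from $L$ to definable sets, whereas you build its inverse $D \mapsto \min\Phi(D)$.
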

\begin{proof}
  For any $a \in L$, let $F_a = \{b \in L : b \geq a\}$ and let $D_a$ be the corresponding definable set in $\X(L)$. Clearly $D_a = D_b$ if and only if $a=b$, since $F_a^- = F_b^-$ if and only if $a=b$, so the map $a \mapsto D_a$ is an injection. Furthermore, $F_{a \vee b} = F_a \cap F_b$ and the definable set corresponding to $F_a \cap F_b$ is $D_a \cup D_b$, so the map $a \mapsto D_a$ preserves joins. $F_{1^L}$ is clearly $\X(L)$ and $F_{0^L}$ is clearly $\varnothing$. Therefore $a \mapsto D_a$ is a bounded semilattice homomorphism. Finally, since $L$ is finite, every non-empty filter is of the form $F_a$, so we have that $a\mapsto D_a$ is a surjection and hence a lattice isomorphism.
\end{proof}

Note that for each $a \in L$, $D_a$ is the unique maximal definable set \emph{not} containing $x_a$.\footnote{$D_{1^L} = \X(L)$ as $x_{1^L}$ does not exist and is therefore not contained in any definable set.} In \cref{fig:N5-space}, the left-hand element of $N_5$ maps to the definable set containing the two right-hand copies of $\AND$ and the right-hand side of the center copy of $\AND$, for instance.

\subsection{Weak minimality}
\label{sec:superstab}

At this point \cite[Thm.\ 5.2]{2021arXiv210613261H} is enough to conclude that the topometric space given in \cref{defn:lattice-space} is actually the type space of a stable theory,\footnote{Provided that we verify that the metric on $\AND$ is open, which is straightforward enough.} but given the special form of the type space involved, we can do better.

There is a common pattern among the example given here and many of the examples constructed in \cite[Sec.\ C.1]{HansonThesis}, which is that theory corresponding to the type space in question has the type space \emph{itself} as a model in the following sense.

\begin{defn}\label{defn:assoc-struct}
  For any compact topometric space $(X,\tau,d)$, we write $\Lc_X$ for the metric language containing a predicate symbol $P_f$ for each continuous $f : X \to [0,1]$, where the modulus of uniform continuity $\alpha_{P_f}$ of $P_f$ is chosen so that $f$ is $\alpha_{P_f}$-uniformly continuous on $X$. (Furthermore, if $f$ is Lipschitz and $r$ is the optimal Lipschitz constant for $f$, we take $\alpha_{P_f}(x)$ to be $rx$.)

  We write $M_X$ for the $\Lc_X$-structure whose underlying metric space is $(X,d)$ and in which $P_f^{M_x}(a) = f(a)$ for all $P_f \in \Lc_X$ and $a \in M$. We write $T_X$ for $\Th(M_x)$.
\end{defn}

It follow from \cite[Lem.\ 1.15, Prop.\ 1.17]{BenYaacov2008} that any continuous function from a compact topometric space to $\Rb$ is automatically uniformly continuous (see also \cite[Prop.\ 2.1.2(v)]{HansonThesis} for a direct proof of the relevant special case). Therefore $\Lc_X$ is always well defined.

For any compact topometric space $X$, we have a natural projection map $\pi_X : S_1(T_X) \to X$. (This follows from the fact that a point $x$ in $X$ is uniquely determined by the its quantifier-free $1$-type in $M_X$.) For most $X$, this will fail to be a homeomorphism, but in some special circumstances it is. % 

\begin{defn}
  We say that a compact topometric space $X$ is \emph{autological} if $\pi_X : S_1(T) \to X$ is an isometric homeomorphism (i.e., an isomorphism of topometric spaces).
\end{defn}

Although we find autologicality quite amusing, it seems unlikely that it plays any broad role.

To complete our result, we will show that for any finite non-trivial lattice $L$, $\X(L)$ is autological and $T_{\X(L)}$ is weakly minimal with trivial geometry (implying that it is superstable).

\begin{prop}
  A compact topometric space $X$ is autological if and only if every type in $S_1(T_X)$ is realized in $M_X$.
\end{prop}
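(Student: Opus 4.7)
The statement is an equivalence; the forward direction is near-tautological and the reverse requires more care.

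\emph{Forward direction.} Assume $\pi_X$ is a bijection. For any $p \in S_1(T_X)$, set $x = \pi_X(p)$. The type $\tp^{M_X}(x) \in S_1(T_X)$ has the same quantifier-free part as $p$---namely the one determined by $x$---so $\pi_X(\tp^{M_X}(x)) = x = \pi_X(p)$, and injectivity of $\pi_X$ forces $p = \tp^{M_X}(x)$, exhibiting $p$ as realized by $x \in M_X$.

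\emph{Reverse direction, topological part.} Suppose every type is realized in $M_X$. Then $\pi_X$ is surjective (via $x \mapsto \tp^{M_X}(x)$) and injective: if $\pi_X(p_1) = \pi_X(p_2) = x$ and $p_i$ is realized by $x_i \in M_X$, then $x_i = \pi_X(\tp(x_i)) = \pi_X(p_i) = x$, forcing $x_1 = x_2$ and hence $p_1 = p_2$. Continuity of $\pi_X$ is automatic: the topology on $X$ is generated by sets of the form $\{z \in X : f(z) < r\}$ with $f : X \to [0,1]$ continuous, whose preimages under $\pi_X$ are the open conditions $[P_f < r] \subseteq S_1(T_X)$. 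A continuous bijection from a compact space to a Hausdorff space is automatically a topological homeomorphism.

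\emph{Reverse direction, isometry.} Fix $p, q \in S_1(T_X)$ with $x = \pi_X(p)$, $y = \pi_X(q)$. The inequality $\dinf(p,q) \leq d_X(x,y)$ is immediate because $(x,y) \in M_X^2$ realizes $(p,q)$. For the reverse, given any $a, b$ realizing $p, q$ in some $N \succeq M_X$ and any continuous $f : X \to [0,1]$, the modulus of $P_f$ yields
\[
  |f(x) - f(y)| = |P_f^N(a) - P_f^N(b)| \leq \alpha_{P_f}(d^N(a,b)).
\]
For a 1-Lipschitz $f$ realizing $|f(x) - f(y)| = d_X(x,y)$---canonically $f(z) = d_X(z, y) \wedge 1$---this would give $d^N(a,b) \geq d_X(x,y)$, and hence $\dinf(p,q) \geq d_X(x,y)$.

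\emph{Main obstacle.} The candidate test function $d_X(\cdot, y)$ is only guaranteed to be lower semicontinuous on a general compact topometric space, so it may not belong to $\Lc_X$. I would address this either by observing that the offending pathology is incompatible with autologicality, or by replacing $d_X(\cdot, y)$ with a supremum of genuine continuous $1$-Lipschitz approximants from below; compactness of $X$ and lower semicontinuity of $d_X(\cdot, y)$ should let this supremum recover $d_X(x,y)$ in the limit, so the modulus bound still produces $d^N(a,b) \geq d_X(x,y)$.
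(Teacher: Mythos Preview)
Your approach is essentially the same as the paper's: same forward direction, same bijection-plus-compactness argument for the homeomorphism, and the same strategy of using Lipschitz predicates $P_f$ to push the isometry inequality through. The only real divergence is at the point you flag as the \emph{main obstacle}. The paper resolves it by citing \cite[Thm.\ 1.6]{BYTopo2010}, which says exactly that in a compact topometric space, for any $r < d(x,y)$ there is a \emph{continuous} $\tfrac{1}{r}$-Lipschitz function $f:X\to[0,1]$ with $f(x)=0$ and $f(y)=1$; plugging $P_f$ in gives $d(p,q)\geq r$ for all such $r$, hence $d(p,q)\geq d_X(x,y)$.

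Two remarks on your proposed fixes. Your first option---``the offending pathology is incompatible with autologicality''---is circular here, since autologicality is what you are trying to establish. Your second option, approximating $d_X(\cdot,y)$ from below by continuous $1$-Lipschitz functions, is the right idea and is essentially the content of Ben~Yaacov's theorem; but it is a genuine fact about compact topometric spaces (lower semicontinuity and compactness are what make it work), not something that falls out for free, so you should either cite it or carry out the approximation argument explicitly rather than leave it at ``should let this supremum recover $d_X(x,y)$.''
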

\begin{proof}
  If $X$ is autological, then for any $p \in S_1(T_X)$, there is an $x \in X$ such that $\tp(x) = p$, which is precisely the required statement.

  Conversely, suppose that every type in $S_1(T_X)$ is realized in $M_X$. Since $\pi_X(\tp(x)) = x$, every type in $S_1(T_X)$ is realized by at most one element of $M_X$. Since every type is realized, they must all be realized by precisely one element of $M_X$. Therefore $\pi_X: S_1(T_X) \to X$ is a bijection. Since $S_1(T_X)$ and $X$ are compact Hausdorff spaces, this implies that $\pi_X$ is a homeomorphism.

  Finally, we need to establish that $\pi_X$ is isometric. We clearly have that for any $p,q \in S_1(T)$, $d(p,q) \leq d(\pi_X(p),\pi_X(q))$ (since $\tp(\pi_X(r)) = r$ for all $r \in S_1(T)$). On the other hand by \cite[Thm.\ 1.6]{BYTopo2010}, we have for any positive $r < d(\pi_X(p),\pi_X(q))$ that there is a $\frac{1}{r}$-Lipschitz function $f: X \to [0,1]$ such that $f(\pi_X(p)) = 0$ and $f(\pi_X(q)) = 1$. Therefore $P_f^p = 0$ and $P_f^q = 1$, implying that $d(p,q) > r$. Since we can do this for any positive $r < d(\pi_X(p),\pi_X(q))$, we have that $d(p,q) \geq d(\pi_X(p),\pi_X(q))$. Therefore $d(p,q) = d(\pi_X(p),\pi_X(q))$, as required.
\end{proof}

\begin{prop}\label{prop:XL-auto}
  For any finite non-trivial lattice $L$, $\X(L)$ is autological.
\end{prop}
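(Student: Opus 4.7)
The plan is to apply the preceding proposition and reduce to showing that every $p \in S_1(T_{\X(L)})$ is realized in $M_{\X(L)}$. The cleanest route is to prove that on $M_{\X(L)}$ every formula is equivalent to an atomic predicate; once this is known, $p$ is determined by the atomic values $r_f := P_f^p$, and a compactness argument produces an $x \in \X(L)$ with $f(x) = r_f$ for every $f$.

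First, I would show by induction on $\varphi$ that every $\Lc_{\X(L)}$-formula $\varphi(y_1,\dots,y_k)$ evaluates in $M_{\X(L)}$ to a $\tau$-continuous function $\X(L)^k \to [0,1]$. The atomic case is exactly the definition of $\Lc_{\X(L)}$, and the connective case is composition. For a quantifier step $\varphi(\bar{y}) = \sup_z \psi(\bar{y},z)$, the inductive hypothesis gives that $\psi^{M_{\X(L)}}$ is jointly $\tau$-continuous on $\X(L)^{k+1}$; since $\X(L)$ is $\tau$-compact, the maximum theorem yields that $\sup_z \psi^{M_{\X(L)}}(\bar{y},z)$ is $\tau$-continuous in $\bar{y}$, and $\inf_z$ is symmetric. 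In particular, for every one-variable formula $\varphi(y)$ the function $\tilde{\varphi} := \varphi^{M_{\X(L)}}$ lies in $C(\X(L),[0,1])$, so $P_{\tilde{\varphi}} \in \Lc_{\X(L)}$ and $P_{\tilde{\varphi}}$ is $T_{\X(L)}$-equivalent to $\varphi$.

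Next, fix $p \in S_1(T_{\X(L)})$ and set $r_f := P_f^p$ for each $f \in C(\X(L),[0,1])$. For any finite $f_1,\dots,f_n$ and any $a \models p$ in some $N \succeq M_{\X(L)}$, the formula $g(y) := \max_i |P_{f_i}(y) - r_i|$ has $g^N(a) = 0$, so the sentence $\inf_y g(y)$ evaluates to $0$ in $T_{\X(L)}$. Hence $\inf_{y \in \X(L)} g^{M_{\X(L)}}(y) = 0$, and because $g^{M_{\X(L)}}$ is $\tau$-continuous on the $\tau$-compact $\X(L)$ the infimum is attained. Thus the closed sets $Z_f := \{y \in \X(L) : f(y) = r_f\}$ satisfy the finite intersection property, and by $\tau$-compactness there is $x \in \bigcap_f Z_f$. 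For any formula $\varphi(y)$ this yields $\varphi^p = P_{\tilde{\varphi}}^p = \tilde{\varphi}(x) = \varphi^{M_{\X(L)}}(x)$, so $\tp(x) = p$.

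The main obstacle is the quantifier step in the continuity induction, which is precisely where $\tau$-compactness of $\X(L)$ is essential; I need joint (not just separate) $\tau$-continuity of $\psi^{M_{\X(L)}}$ for the maximum theorem to apply. This joint continuity is, however, straightforward: each atomic formula depends on only one variable, and joint continuity is preserved by continuous connectives and by the sup/inf step once the maximum-theorem conclusion is in hand.
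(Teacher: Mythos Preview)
Your argument has a genuine gap at the inductive base case. You assert that ``each atomic formula depends on only one variable,'' but the metric symbol $d(y_1,y_2)$ is a binary atomic formula in every continuous language, and in a topometric space $d$ is only required to be $\tau$-\emph{lower}-semi-continuous. In $\X(L)$ it is genuinely discontinuous: already in a single copy of $\AND$, the point $\langle 0,0\rangle$ is not $\tau$-isolated, yet $d(\langle 0,0\rangle,y)=1$ for every $y\neq\langle 0,0\rangle$ (any other point has first coordinate $\neq 0$, forcing distance $1$). Equivalently, the sequences $a_n=\langle 1/n,1/n\rangle$ and $b_n=\langle -1/n,1/n\rangle$ both $\tau$-converge to $\langle 0,0\rangle$ while $d(a_n,b_n)=1$ for all $n$. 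So $d^{M_{\X(L)}}$ is not jointly (or even separately) $\tau$-continuous, the induction hypothesis fails for $k=2$, and the maximum-theorem step cannot be launched. The eventual conclusion that every one-variable formula is $\tau$-continuous is in fact equivalent to what you are trying to prove, so this is not a detail that can be patched by a local fix; one really needs an argument that engages with the metric.

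The paper avoids this entirely by passing to a saturated ultrapower $M_{\X(L)}^{\Uc}$ and showing, for each $a$ in the ultrapower, that there is an automorphism of $M_{\X(L)}^{\Uc}$ interchanging $a$ with the point $\pi_{\X(L)}(\tp(a))\in M_{\X(L)}$. This uses the very special local structure of $\X(L)$: every point has at most two other points at distance $<1$ from it, and these finite ``clusters'' have a rigid shape, so one can explicitly swap the cluster containing $a$ with the corresponding cluster in $M_{\X(L)}$. Nothing like a $\tau$-continuity induction is needed.
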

\begin{proof}
  Find an ultrafilter $\Uc$ (on some index set $I$) such that the ultrapower $M_{\X(L)}^\Uc$ is $|\Lc_{\X(L)}|^+$-saturated. In particular, $M_{\X(L)}^\Uc$ realizes all types in $S_1(T_{\X(L)})$. Fix $a \in M_{\X(L)}^\Uc$. We need to argue that $a \equiv \pi_{\X(L)}(\tp(a))$.

  There are three kinds of points in $\X(L)$: points $x$ for which $d(x,y) = 1$ for all $y \neq x$, points $x$ for which there is precisely one $y$ for which $0<d(x,y) < 1$, and points $x$ for which there are precisely two points, $y$ and $z$, such that $0<d(x,y) < 1$ and $d(x,y)=d(x,y)=d(y,z)$. 

  The set of points of the first kind is closed and the sets of points of the second two kinds are open. Clearly if $a \in M_{\X(L)}^\Uc$ is a limit of points of the first kind, it will satisfy the same property in $M_{\X(L)}^{\Uc}$. Therefore the map that switches $a$ and $\pi_X(\tp(a))$ is an automorphism of $M_{\X(L)}^{\Uc}$ and we have that $a \equiv \pi_X(\tp(a))$. 

  Suppose that $a = (x_i)_{i\in I}/\Uc$ is some element of $M_{\X(L)}^\Uc$ where $x_i$ is a point of the second kind for a $\Uc$-large set of indices $i \in I$. The family $(x_i)_{i\in I}$ must eventually concentrate in a single copy of $\AND$, and in that copy it will be in the region $\{\langle x,\pm x \rangle : -1 < x < 0\}$. For each $i \in I$ for which $x_i$ is a point of the second kind, let $y_i$ be the unique point in $X$ such that $d(x_i,y_i) < 1$. Let $b = (y_i)_{i\in I}/\Uc$. There are three possibilities. Either $\lim_{i \to \Uc}d(x_i,y_i) = 0$, $\lim_{i \in \Uc}d(x_i,y_i) \in (0,1)$, or $\lim_{i \in \Uc}d(x_i,y_i) = 1$. In the first and third case, we have that $\pi_X(\tp(a))$ is a point of the first kind and once again the map that switches $a$ and $\pi_X(\tp(a))$ is an automorphism of $M^{\Uc}_{\X(L)}$. In the second case, we similarly have that the map that switches $a$ and $\pi_X(\tp(a))$ and switches $b$ and $\pi_X(\tp(b))$ is an automorphism of $M^{\Uc}_{\X(L)}$. Therefore in any case we have that $a \equiv \pi_X(\tp(a))$.

  The argument when $x_i$ is a point of the third kind for a $\Uc$-large set of indices is essentially the same.
\end{proof}

\begin{thm}\label{thm:fin-lat-type-space}
  For any finite non-trivial lattice $L$, there is a weakly minimal theory $T$ with trivial geometry such that the semilattice of definable subsets of $S_1(T)$ is isomorphic to $L$.
\end{thm}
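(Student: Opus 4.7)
The plan is to take $T = T_{\X(L)}$, the theory of the structure $M_{\X(L)}$ from \cref{defn:assoc-struct}. By \cref{prop:XL-auto}, $\X(L)$ is autological, so $\pi_{\X(L)} \colon S_1(T) \to \X(L)$ is an isomorphism of topometric spaces. Combined with \cref{prop:XL-lattice-iso}, this immediately gives that the semilattice of definable subsets of $S_1(T)$ is isomorphic to $L$, so all that remains is to show that $T$ is weakly minimal with trivial geometry.

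For this I would extend the ultrapower analysis of \cref{prop:XL-auto} to describe algebraic closure and forking in a monster model $\mathfrak{C} \succeq M_{\X(L)}$. That proof already classifies each element of $\mathfrak{C}$ into three kinds, according to its local metric neighborhood: crisply isolated from every other point; having exactly one point in $(0,1)$-distance; or sitting at the center of an $\AND$-gate with two such neighbors. The crucial observation is that any element $a$ of the second or third kind sits on an arc within a single copy of $\AND$, while every point outside that arc lies at distance exactly $1$ from $a$. From this one argues that $\acl(A)$ consists of $A$ together with the (finitely many, $\varnothing$-algebraic) vertices $x_b$ of $\X(L)$ and the at most two neighbors of each non-crisp element of $A$.

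Given this description, weak minimality is immediate: every non-algebraic $1$-type $p$ over $A$ is determined by the single copy of $\AND$ on which it concentrates (if any) and its distance profile to the elements of $A$ in that copy, and such a type has a unique non-forking extension to any superset, so $U(p) \le 1$. Triviality of the induced pregeometry follows because if $b \in \acl(Ac) \setminus \acl(A)$ then the classification forces $b$ to be a neighbor of $c$, whence $b \in \acl(c)$.

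The main obstacle is the rigorous verification of $\acl$ and non-forking in the continuous setting. The combinatorics of $\X(L)$, a finite union of $\AND$-gates soldered at crisply embedded vertices, make this tractable: the crisp embedding is precisely what prevents any metric leakage between distinct copies of $\AND$, so the entire dependence calculus reduces to the very explicit one-dimensional arc structure inside a single copy of $\AND$.
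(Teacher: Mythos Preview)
Your proposal follows essentially the same route as the paper: take $T=T_{\X(L)}$, use autologicality and \cref{prop:XL-lattice-iso} to get the semilattice, and then read off weak minimality and triviality from the explicit description of $\acl$ and forking furnished by the ultrapower analysis in \cref{prop:XL-auto}. The paper phrases the last step slightly differently---it directly exhibits the forking relation $B\nind_A C \Leftrightarrow (\exists b\in B)(\exists c\in C)\,d(b,A)=d(c,A)=1\wedge d(b,c)<1$ and observes that $d(b,c)<1$ implies $b\in\acl(c)$---but this is the same content as your description of $\acl$ and of non-forking extensions.

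One small correction: the soldered vertices $x_b$ are \emph{not} $\varnothing$-algebraic. Each $x_b$ is crisply embedded, so $\{x_b\}^{<r}=\{x_b\}$ for $r\le 1$, but $\{x_b\}$ is not open in $\X(L)$ (several edges meet there), hence $\{x_b\}$ is not a definable set and the type of $x_b$ has infinitely many realizations in a saturated model. This does not affect your argument---just delete the clause about the vertices from your description of $\acl(A)$, leaving $\acl(A)$ as $A$ together with the at most two metric neighbors of each non-crisp element of $A$, exactly matching the paper's ``$d(b,c)<1\Rightarrow b\in\acl(c)$.''
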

\begin{proof}
  By \cref{prop:XL-lattice-iso}, we know that there is a topometric space $\X(L)$ whose semilattice of definable sets is isomorphic to $L$. By \cref{prop:XL-auto}, we know that $S_1(T_X)$ is isometrically homeomorphic to $X$, so their semilattices of definable sets are isomorphic.

  The proof of \cref{prop:XL-auto} makes it clear that a stable forking relation can be defined on models of $T_X$ by saying that $B \nind_A C$ if and only if there are $b \in B$ and $c \in C$ such that $d(b,A) = 1$, $d(c,A) = 1$, and $d(b,c) < 1$. Furthermore, whenever $d(b,c) < 1$, we have that $b \in \acl(c)$. Therefore the only way for a $1$-type to fork is for it to become algebraic, which implies that the theory is weakly minimal.
\end{proof}

\section{Some infinite semilattices of definable sets}
\label{sec:some-infin-lat}

Using some of the technology from \cref{sec:building-fin-lat}, we are able to realize some particular infinite lattices as the semilattice of definable subsets of a type space. The idea is to build an infinitely large circuit out of copies of $\AND$ and then compactify in an appropriate way (e.g., \cref{fig:omega-p-1}), possibly continuing the construction further (e.g., \cref{fig:omega-p-omega-p-1}). Since we use \cite{2021arXiv210613261H}, the resulting theories are all stable, but superstability is unclear. The issue is that the resulting type spaces are not autological and so we cannot build the corresponding theory in the same way that we did in \cref{thm:fin-lat-type-space}. This naturally leaves a question.

\begin{quest}
  Are the type spaces constructed in Propositions~\ref{prop:ordinal-type-spaces} and \ref{prop:semi-not-lat} and \cref{thm:semilat-defbl-sets} the type spaces of superstable theories? If not, are there superstable theories with the same semilattices of definable sets?
\end{quest}

\subsection{Crisp one-point compactifications}
\label{sec:machinery}

\begin{defn}\label{defn:crisp-one-point}
  Given a topometric space $(X,\tau,d)$, the \emph{crisp one-point compactification of $X$} is the topometric space whose underlying topological space is the one-point compactification of $X$, $X\cup\{\infty\}$, and whose metric is $d$ extended so that $\infty$ is crisply embedded (i.e., $d(\infty,x) = 1$ for all $x \in X$).

  We denote the crisp one-point compactification of $(X,\tau,d)$ by $(X^\ast,\tau^\ast,d^\ast)$ if it exists.
\end{defn}

(Recall that we are taking all metrics to be $[0,1]$-valued.) Note that while the object described in \cref{defn:crisp-one-point} always exists, it can in general fail to be a topometric space. For instance, an infinite discrete space with a discrete $\{0,\frac{1}{2}\}$-valued metric has no crisp one-point compactification. 

\begin{lem}\label{lem:compactification-of-open-is-open}
   Fix a topometric space $(X,\tau,d)$. If $X$ has an open metric and $X^\ast$ exists, then $d^\ast$ is open as well.
\end{lem}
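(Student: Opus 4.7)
My plan is to verify the openness condition $U^{<r} \in \tau^\ast$ directly for arbitrary open $U \subseteq X^\ast$ and $r > 0$, splitting on the two structural cases allowed by the one-point compactification. Since all metrics are $[0,1]$-valued, the range $r > 1$ is immediate because $U^{<r} = X^\ast$, so I only need to treat $0 < r \leq 1$.

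\textbf{Case 1: $\infty \notin U$.} Then $U$ is an open subset of $X$ in the subspace topology (which agrees with $\tau$ on $X$). Because $d^\ast(\infty, x) = 1 \geq r$ for every $x \in X$, no open ball of radius $r$ around a point of $U$ can reach $\infty$, so $U^{<r}$ computed in $X^\ast$ coincides with $U^{<r}$ computed in $X$. The latter is open in $X$ by hypothesis, and $X$ is open in $X^\ast$, so $U^{<r}$ is open in $X^\ast$.

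\textbf{Case 2: $\infty \in U$.} Set $V = U \cap X$, which is open in $X$. By definition of the one-point compactification, $X^\ast \setminus U = X \setminus V$ is compact in $X$. The same ball-size bookkeeping as above (points of $X$ cannot reach $\infty$ within distance $\leq 1$, while $\infty \in U \subseteq U^{<r}$) gives $U^{<r} = V^{<r} \cup \{\infty\}$. By openness of $d$, $V^{<r}$ is open in $X$, so $X \setminus V^{<r}$ is closed in $X$; being contained in the compact set $X \setminus V$, it is itself compact. Hence $V^{<r} \cup \{\infty\}$ is a neighborhood of $\infty$ in $X^\ast$ by the very definition of the one-point compactification, and $U^{<r}$ is open.

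The only place the specific structure of $X^\ast$ really enters is the last step, where I need to pass from ``$X \setminus V$ is compact'' to ``$X \setminus V^{<r}$ is compact''; this is the step I'd flag as needing attention, but it goes through painlessly because enlarging $V$ to $V^{<r}$ only shrinks the complement, and that complement remains closed in $X$ thanks to the hypothesis that $d$ is open. No other ingredient beyond this and the $[0,1]$-valuedness of the metric is required.
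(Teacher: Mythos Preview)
Your proof is correct and follows essentially the same route as the paper's: you split on whether $\infty \in U$, identify $U^{<r}$ with $(U\cap X)^{<r}$ (possibly together with $\{\infty\}$) using the crisp embedding of $\infty$, and in the second case argue that the complement of $(U\cap X)^{<r}$ is a closed subset of the compact $X\setminus V$, hence compact. Your presentation is arguably a bit more explicit about why that complement is compact (you note it shrinks as $V$ grows to $V^{<r}$), but otherwise the arguments coincide.
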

\begin{proof}
  Recall that a subset of the topological one-point compactification $(X^\ast,\tau^\ast)$ is open if and only if it is an open subset of $X$ or is the complement (in $X^\ast = X \cup \{\infty\}$) of a closed compact subset of $X$. Since $X$ is a topometric space, it is automatically Hausdorff and thus all compact subsets of it are closed.

  If $U \subseteq X$ is open, then for any $r \leq 1$, we have that $U^{<r}\subseteq X$ and so $U^{<r}$ is an open set. For $r > 1$, $U^{<r} = X^\ast$ is also an open set.

  If $U \subseteq X^\ast$ is the complement of a compact subset $F$ of $X$, then for any $r \leq 1$, we have that $U^{<r} = (U\cap X)^{<r}\cup\{\infty\}$, $F \setminus U^{<r}$ is therefore the same as $F \setminus (U\cap X)^{<r}$, which is a closed subset of a compact set and therefore compact itself. Thus $U^{<r}$ is the complement of a closed compact set and so is an open subset of $X^\ast$. If $r \geq 1$, then again $U^{<r} = X^\ast$.
\end{proof}

\begin{prop}\label{prop:compactification-definable-set-characterization}
  Fix a topometric space $(X,\tau,d)$ such that $X^\ast$ exists. A closed set $D \subseteq X^\ast$ is definable if and only $D \cap X$ is definable in $X$ and either
  \begin{itemize}
  \item $D$ is compact and $D \subseteq X$ or
  \item for every $r > 0$, $X \setminus (\int_\tau (D\cap X)^{<r})$ is compact. 
  \end{itemize}
\end{prop}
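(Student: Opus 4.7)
The plan is to split into cases based on whether $\infty \in D$; since $X^\ast$ is compact Hausdorff (by assumption that $X^\ast$ exists as a topometric space) and $D$ is closed in $X^\ast$, the alternative $\infty \notin D$ coincides exactly with the first bullet (``$D$ is compact and $D \subseteq X$'').

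In the case $\infty \notin D$, I would use that $\infty$ is crisply embedded, so that for any $r \leq 1$ the $r$-ball around $\infty$ is $\{\infty\}$ and hence $D^{<r}$ computed in $X^\ast$ coincides with $D^{<r}$ computed in $X$. Since $X$ is open in $X^\ast$, interiors of subsets of $X$ in $\tau^\ast$ coincide with interiors in $\tau$, and so the condition $D \subseteq \int_{\tau^\ast} D^{<r}$ (which by \cref{fact:def-set-basic} need only be checked for arbitrarily small $r > 0$) reduces to the definability of $D = D \cap X$ in $X$.

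In the case $\infty \in D$ I would decompose $D \subseteq \int_{\tau^\ast} D^{<r}$ into two pieces: $D \cap X \subseteq \int_{\tau^\ast}(D^{<r})$ and $\infty \in \int_{\tau^\ast}(D^{<r})$. For $r \leq 1$ we have $D^{<r} \cap X = (D \cap X)^{<r}$, and because $X$ is open in $X^\ast$ one checks that $\int_{\tau^\ast}(D^{<r}) \cap X = \int_\tau((D \cap X)^{<r})$; the first piece is therefore equivalent to the definability of $D \cap X$ in $X$. For the second piece, I would use that open neighborhoods of $\infty$ in $X^\ast$ are precisely the sets $X^\ast \setminus K$ with $K \subseteq X$ compact, so that $\infty \in \int_{\tau^\ast}(D^{<r})$ iff there is some compact $K \subseteq X$ with $X \setminus K \subseteq (D \cap X)^{<r}$.

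The remaining step is to match this criterion with the stated compactness of $X \setminus \int_\tau((D \cap X)^{<r})$. For sufficiency, take $K := X \setminus \int_\tau((D \cap X)^{<r})$ itself, which is compact by hypothesis; then $X \setminus K = \int_\tau((D \cap X)^{<r}) \subseteq (D \cap X)^{<r}$ as required. For necessity, given any such $K$, the open set $X^\ast \setminus K$ lies in $\int_{\tau^\ast}(D^{<r})$, so $X \setminus K \subseteq \int_\tau((D \cap X)^{<r})$ and hence $X \setminus \int_\tau((D \cap X)^{<r}) \subseteq K$; since the left-hand side is closed in $X$ and sits inside the compact set $K$, it is itself compact. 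The one subtlety to watch is exactly this last point: the condition needs literal compactness, not merely relative compactness, and this works out because we are taking the complement of an open set rather than of an arbitrary set.
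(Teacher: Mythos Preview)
Your proposal is correct and follows essentially the same approach as the paper: split on whether $\infty \in D$, use that $\infty$ is crisply embedded so that $D^{<r}\cap X = (D\cap X)^{<r}$ for $r\le 1$, and translate the condition $\infty \in \int_{\tau^\ast}D^{<r}$ via the description of open neighborhoods of $\infty$ in the one-point compactification. Your write-up is in fact more careful than the paper's at one point: in the forward direction of the second bullet the paper simply asserts that $\int_\tau(D\cap X)^{<r}$ is co-compact ``by the definition of the topology on $X^\ast$,'' whereas you spell out why the complement, being closed in the Hausdorff space $X$ and contained in a compact $K$, is itself compact.
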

\begin{proof}
  Assume that $D \subseteq X^\ast$ is definable. For any positive $r \leq 1$, we have that $D^{<r}\cap X = (D\cap X)^{<r}$, so $D\cap X \subseteq (\int_{\tau^\ast}D^{<r}) \cap X \subseteq (D\cap X)^{<r}$ and therefore $D \cap X \subseteq \int_\tau (D\cap X)^{<r}$. Therefore $D\cap X$ is definable in $X$.

  If $\infty \notin D$, then $D$ must be compact, since $D$ is a closed subset of $X^\ast$ not containing $\infty$ (and since $X$ is a topometric space and therefore Hausdorff).
  
  If $\infty \in D$, then for every $r > 0$, we have that $\int_{\tau^\ast} D^{<r}$ is an open neighborhood of $\infty$. Therefore, by the definition of the topology on $X^\ast$, $\int_{\tau} (D\cap X)^{<r}$ is co-compact in $X$.

  Now assume that $D\cap X$ is definable.  If the first bullet point holds, then for any positive $r \leq 1$, we have that $D^{<r} \cap X = (D\cap X)^{<r}$, so $D \subseteq \int_\tau D^{<r} = \int_{\tau^\ast}D^{<r}$. Therefore $D$ is definable in $X^\ast$.

  If the second bullet point holds, then for any positive $r \leq 1$ and $x \in D$, we either have that $x \in X$, in which case $x \in \int_\tau(D\cap X)^{<r} \subseteq \int_{\tau^\ast}D^{<r}$, or $x = \infty$, in which case $\{\infty \}\cup \int_\tau (D\cap X)^{<r} \subseteq D^{<r}$ is an open neighborhood of $\infty$. So in either case, $x \in \int_{\tau^\ast} D^{<r}$. Therefore $D \subseteq \int_{\tau^\ast} D^{<r}$ and since we can do this for any sufficiently small $r > 0$, $D$ is definable in $X^\ast$.
\end{proof}

\subsection{Directed systems of topometric spaces}
\label{sec:directed-systems}

We need the following definition and technical lemmas to deal with the fact that direct limits (also called directed colimits) seem to be rather delicate in the category of topometric spaces.

\begin{defn}\label{defn:directed-system}
  Fix a directed set $I$, a family $(X_i,\tau_i,d_i)_{i\in I}$ of topometric spaces, and isometric topological embeddings $f_{ij}:X_i \to X_j$ for each pair $i\leq j \in I$. Suppose that this data forms a directed system. Let $(X_I,\tau_I,d_I)$ be the direct limit of this system (in the sense that $(X_I,\tau_I)= \lim_{i \in I}(X_i,\tau_i)$ and $(X_I,d_I) = \lim_{i \in I}(X_i,d_i)$) and let $f_{jI}:X_j \to X_I$ be the induced inclusion maps.

  We say that the directed system of topometric spaces $((X_i,\tau_i,d_i)_{i \in I},(f_{ij})_{i\leq j \in I})$ is \emph{crisp} if $f_{ij}[X_i]$ is crisply embedded in $X_j$ for any $i \leq j \in I$. We say that it is \emph{eventually open} if for any $x \in X_I$, there is a $j \in I$ such that $x \in \int_{\tau_I}f_{jI}[X_j]$.
\end{defn}

\begin{lem}\label{lem:directed-system-definable-set-characterization}
  Let $((X_i,\tau_i,d_i)_{i \in I},(f_{ij})_{i\leq j \in I})$ be a crisp directed system of topometric spaces satisfying that $\lim_{i \in I}(X_i,\tau_i,d_i) = (X_I,\tau_I,d_I)$ is a topometric space. Fix a closed set $D \subseteq X_I$.
  \begin{enumerate}
  \item If $D$ is definable in $X_I$, then $D \cap X_j$ is definable in $X_j$ for every $j \in I$.
  \item If $d_j$ is open and $D \cap X_j$ is definable in $X_j$ for every $j \in I$, then $D$ is definable in $X_I$.
  \end{enumerate}
\end{lem}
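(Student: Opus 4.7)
The proof plan rests on a single locality observation enabled by crispness. Since each $f_{ij}[X_i]$ is crisply embedded in $X_j$ for $i \leq j$, a short argument through the directed system shows that $X_j$ (identified with $f_{jI}[X_j]$) is crisply embedded in $X_I$: for any $y \in X_I \setminus X_j$, pick $k \in I$ with $y \in X_k$ and some $\ell \geq j, k$, then apply crispness of $X_j$ inside $X_\ell$ together with the isometry $d_I \res X_\ell = d_\ell$. Consequently, for any $E \subseteq X_I$ and any $r \leq 1$, one has $E^{<r} \cap X_j = (E \cap X_j)^{<r}$, where the right-hand side can be computed interchangeably in $X_I$ or in $X_j$. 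Since metrics are $[0,1]$-valued, the cases $r > 1$ are trivial throughout.

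For part 1, I fix $j \in I$, $r \in (0,1]$, and $y \in D \cap X_j$. By definability of $D$, I pick a $\tau_I$-open $V$ with $y \in V \subseteq D^{<r}$. Because the direct limit topology makes $f_{jI}$ continuous, $V \cap X_j = f_{jI}^{-1}[V]$ is $\tau_j$-open; by the locality observation, $V \cap X_j \subseteq D^{<r} \cap X_j = (D \cap X_j)^{<r}$. So $y \in \int_{\tau_j}(D \cap X_j)^{<r}$, which suffices by \cref{defn:def-set-topo}.

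For part 2, the plan is first to show that $d_I$ is open, and then invoke \cref{fact:def-set-basic} (equivalence of 1 and 3) to reduce the definability of $D$ in $X_I$ to the $\tau_I$-openness of $D^{<r}$ for every $r > 0$. Both openness claims follow from the same template: by the definition of the direct limit topology, a subset $W \subseteq X_I$ is $\tau_I$-open iff $W \cap X_\ell$ is $\tau_\ell$-open for every $\ell$. Taking $W = U^{<r}$ for a $\tau_I$-open $U$ and $r \leq 1$, locality gives $W \cap X_\ell = (U \cap X_\ell)^{<r}$, which is $\tau_\ell$-open since $U \cap X_\ell$ is $\tau_\ell$-open and $d_\ell$ is open; so $d_I$ is open. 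Taking instead $W = D^{<r}$, locality gives $W \cap X_\ell = (D \cap X_\ell)^{<r}$, which is $\tau_\ell$-open by \cref{fact:def-set-basic} applied in $X_\ell$ using the definability of $D \cap X_\ell$ and the openness of $d_\ell$; so $D^{<r}$ is $\tau_I$-open. I do not expect a serious obstacle — the only thing one has to be careful about is that locality genuinely fails without crispness (points outside $X_j$ could otherwise be near $D \cap X_j$ in $X_I$), and it is precisely this failure mode that crispness is designed to rule out, making the reductions between the $X_j$ and $X_I$ collapse cleanly.
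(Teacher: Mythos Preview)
Your proof is correct and follows essentially the same route as the paper: both arguments hinge on the locality identity $E^{<r}\cap X_j=(E\cap X_j)^{<r}$ for $r\le 1$ coming from crispness, together with the characterization of $\tau_I$-open sets via their traces on each $X_j$. One small remark: your detour in part~2 through proving that $d_I$ is open is unnecessary, since the implication ``$D^{<r}$ open for all $r$ $\Rightarrow$ $D$ definable'' in \cref{fact:def-set-basic} holds in any topometric space without assuming the metric is open; the paper simply observes $D^{<r}$ is $\tau_I$-open directly and stops there.
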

\begin{proof}
  We may assume without loss of generality that each $X_j$ is a subset of $X_I$ and $f_{jI}: X_j \to X_I$ is the identity map.
  
  For 1, fix $r \in (0,1]$ and consider $D^{<r}$. Since each $X_j$ is crisply embedded in $X_I$, we have that $D^{<r}\cap X_j = (D\cap X_j)^{<r}$. Let $U = \int_{\tau_I}D^{<r}$. By assumption, $D \subseteq U$. By the definition of the direct limit topology, $U \cap X_j$ is $\tau_j$-open. Therefore
  \[
    D\cap X_j \subseteq U \cap X_j \subseteq D^{<r}\cap X_j \subseteq (D\cap X_j)^{<r},
  \]
  whence $D \cap X_j \subseteq \int_{\tau_j}(D\cap X_j)^{<r}$. Since we can do this for any sufficiently small $r > 0$, $D\cap X_j$ is definable in $X_j$.

  For 2, it's clear that for any $r > 0$, $D^{<r} = \bigcup_{i \in I}(D\cap X_j)^{<r}$. Furthermore, just as before, we have that if $r \leq 1$, then $(D\cap X_j)^{<r} = D^{<r}\cap X_j$ for every $j \in I$. Therefore, since each $(D\cap X_j)^{<r}$ is $\tau_j$-open, we have that $D^{<r}$ is $\tau_I$-open. For $r \geq 1$, $D^{<r}$ is either $\varnothing$ or $X_I$, so $D^{<r}$ is open in every case, and $D$ is definable in $X_I$.
\end{proof}

\begin{lem}\label{lem:topo-technical}
  Suppose that $((X_i,\tau_i,d_i)_{i \in I},(f_{ij})_{i\leq j \in I})$ is a crisp and eventually open directed system of topometric spaces.
  \begin{enumerate}
  \item  $\lim_{i\in I}(X_i,\tau_i,d_i) = (X_I,\tau_I,d_I)$ is a topometric space.
  \item If $d_i$ is open for every $i \in I$, then $d_I$ is open.
  \item If $X_i$ is compact for every $i \in I$, then $(X_I,\tau_I,d_I)$ has a crisp one-point compactification.
  \end{enumerate}
\end{lem}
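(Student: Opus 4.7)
Before attacking the three parts, I would establish two preliminary observations that do all the heavy lifting. First, each $f_{jI}[X_j]$ is crisply embedded in $X_I$: given $x \in f_{jI}[X_j]$ and $y \in X_I \setminus f_{jI}[X_j]$, pick $k \geq j$ with $y \in f_{kI}[X_k]$; then $y \notin f_{jk}[X_j]$ (otherwise $y$ would lie in $f_{jI}[X_j]$), and crispness of $f_{jk}$ gives $d_I(x,y) = d_k(x,y) = 1$. Second, for each $j$, writing $W_j := \int_{\tau_I} f_{jI}[X_j]$, the subspace topology on $W_j$ inherited from $\tau_I$ agrees with the one inherited from $\tau_j$. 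One direction is immediate from the direct-limit definition of $\tau_I$; for the other, given a $\tau_j$-open $O \subseteq W_j$, one checks that $O \cap X_i$ is $\tau_i$-open for every $i$, using that $X_j$ is topologically embedded in $X_i$ for $i \geq j$, and that $W_j$ is already $\tau_I$-open when handling the remaining $i$.

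For (1), $d_I$ is a well-defined $[0,1]$-valued metric by directedness and isometry of the $f_{ij}$, so the only things to verify are that $d_I$ refines $\tau_I$ and is lower semi-continuous. For refinement, given a $\tau_I$-open $U \ni x$, use eventual openness to pick $j$ with $x \in W_j$; then $U \cap W_j$ is $\tau_j$-open by the preliminary, so contains a $d_j$-ball of some radius $r \leq 1$ about $x$, which by crispness coincides with the $d_I$-ball. For lower semi-continuity, given $(x_0,y_0)$ with $d_I(x_0,y_0) > r$, enlarge $j$ (by directedness and eventual openness) so that both $x_0, y_0 \in W_j$; lower semi-continuity of $d_j$ yields $\tau_j$-open $U \ni x_0$, $V \ni y_0$ with $d_j > r$ on $U \times V$, and intersecting with $W_j$ promotes these to $\tau_I$-open sets via the preliminary. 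Part (2) follows along the same lines: if $y \in U^{<r}$ for a $\tau_I$-open $U$ and $r \leq 1$, any witness $x$ lies in a common $W_j$ with $y$ by crispness, and openness of $d_j$ combined with the preliminary produces a $\tau_I$-open neighborhood of $y$ inside $U^{<r}$; the case $r > 1$ is trivial.

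For (3), I would first verify that $X_I$ is locally compact Hausdorff, so that its topological one-point compactification is compact Hausdorff. Hausdorffness follows by separating two points inside a common $W_j$ and promoting the resulting $\tau_j$-open sets to $\tau_I$-open ones via the preliminary; local compactness is immediate since each point lies in $W_j \subseteq f_{jI}[X_j]$ with $X_j$ compact. It then remains to check that the crisp extension $d^*$ is lower semi-continuous and refines $\tau^*$. Refinement is straightforward: inside $X_I$ we reuse (1), and around $\infty$ the $d^*$-ball of radius one reduces to $\{\infty\}$ and lies in every $\tau^*$-neighborhood. The new case of lower semi-continuity is $(\infty, y_0)$ with $y_0 \in X_I$: pick $j$ with $y_0 \in W_j$, and take $V = W_j$ together with $U = X_I^* \setminus f_{jI}[X_j]$. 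Both are $\tau^*$-open (the latter because $X_j$ is compact, hence $\tau_I$-closed in the Hausdorff space $X_I$), and crispness forces $d^* \equiv 1$ on $U \times V$.

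The main obstacle is the topological preliminary about $W_j$: without that bridge one cannot convert a $\tau_j$-open witness into a $\tau_I$-open one, and essentially every subsequent step (refinement, lower semi-continuity, openness of $d_I$, and Hausdorffness of $\tau_I$) stalls. Once the preliminary is in hand, the remainder becomes routine applications of crisp embedding, which confines every argument to a single $W_j$ as soon as the relevant radii are at most $1$, together with the properties already known for each $X_j$.
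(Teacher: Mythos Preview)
Your proposal is correct and follows essentially the same route as the paper: crispness of $f_{jI}[X_j]$ in $X_I$, refinement and lower semi-continuity handled inside a common $\int_{\tau_I}X_j$, openness of $d_I$ via openness of each $d_j$ restricted to these interiors, and the new case of lower semi-continuity at $\infty$ handled by separating with $\int_{\tau_I}X_j$ and its complement $X_I^\ast\setminus X_j$. Your explicit isolation of the preliminary that the subspace topologies on $W_j$ from $\tau_I$ and $\tau_j$ agree is a clean organizational choice that the paper uses implicitly each time it passes from a $\tau_j$-open set to a $\tau_I$-open one; your separate verification of local compactness and Hausdorffness in part~(3) is not strictly needed (lower semi-continuity of $d_I^\ast$ already forces Hausdorffness of $X_I^\ast$), but it does no harm.
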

\begin{proof}
  Without loss of generality we may identify each $X_i$ with its image $f_{iI}[X_i]\subseteq X_I$, so that the maps $f_{ij}$ and $f_{iI}$ are identity maps. It is immediate that $X_i$ is crisply embedded in $X_I$ for every $i \in I$.

  Recall that a set $U \subseteq X_I$ is $\tau_I$-open if and only if $U\cap X_i$ is $\tau_i$-open for every $i \in I$. Fix $x \in X_I$ and an open neighborhood $U \ni x$. Find $i \in I$ such that $x \in X_i$. Find an $\e > 0$ with $\e < 1$ such that $B_{<\e}(x)\cap X_i \subseteq U\cap X_i$ (which exists since $d_i$ refines the topology $\tau_i$). Since $X_i$ is crisply embedded in $X_I$, we have that $B_{<\e}(x)\cap X_i = B_{<\e}$. Therefore $B_{<\e}(x) \subseteq U$. Since we can do this for any $x$ and $U \ni x$, we have that $d_I$ refines $\tau_I$.

  Fix $x,y \in X_I$ and $r > 0$ such that $d_I(x,y) > r$. We may assume without loss of generality that $r < 1$.  Find $j \in I$ such that $x$ and $y$ are elements of $\int_{\tau_I}X_j$. (We can do this because of the fact that if $x \in \int_{\tau_I}X_j$ and $y \in \int_{\tau_I}x_k$, then for any $\ell \geq j,k$, $\{x,y\} \subseteq \int_{\tau_I}X_{\ell}$.) Since $X_j$ is a topometric space, there are neighborhoods $U \ni x$ and $V \ni y$ in $X_j$ such that for any $x' \in U$ and $y' \in V$, $ d_j(x',y') > r$. 

  We now have that $U \cap \int_{\tau_I}X_j \ni x$ and $V \cap \int_{\tau_I}X_j \ni y$ are neighborhoods in $X_I$ with the same property. Since we can do this for any $x,y \in X_I$, we have that $d_I$ is lower semi-continuous and so 1 holds (i.e., $X_I$ is a topometric space).

  For 2, fix an open set $U$. For each $x \in U$, find a $j(x) \in I$ such that $x \in \int_{\tau_I}X_{j(x)}$ and let $V_x = U \cap \int_{\tau_I}X_{j(x)}$. We clearly have that $U = \bigcup_{x \in U}V_x$. Fix $r > 0$. If $r > 1$, then $U^{<r} = X_I$ is an open set, so assume that $r \leq 1$. We have that $U^{<r} = \bigcup_{x \in U}V_x^{<r}$. Since $X_{j(x)}$ is crisply embedded in $X_I$, we have that $V_x^{<r} \subseteq X_{j(x)}$. Furthermore, since $d_{j(x)}$ is open, $V_x^{<r}$ is $\tau_{j(x)}$-open.

  We now need to argue that $V_x^{<r}$ is open. Fix $y \in V_x^{<r}$. Find $k \in I$ such that $k \geq j(x)$ and $y \in \int_{\tau_I}X_k$. We now have that $X_j$ is crisply embedded in $X_k$, so $V_x^{<r}$ computed in $X_k$ is the same set as $V_x^{<r}$ computed in $X_j$. Since $d_j$ is open, we have that $V_x^{<r}$ is open as a subset of $X_j$. So now we have that $V_x^{<r} \cap \int_{\tau_I}X_k$ is an open neighborhood of $x$ in $X_k$ (note that $\int_{\tau_I}X_k$ is open in $X_k$). Therefore it is an open neighborhood in $X_I$ as well. Since we can do this for any $y \in V_x^{<r}$, we have that $V_x^{<r}$ is open. Finally, since $V_x^{<r}$ is open for any $x \in U$, $U^{<r}$ is open as well.

  For 3, Since $X_I$ is a topometric space, it is Hausdorff and has that all of its compact sets are closed. As discussed after \cref{defn:crisp-one-point}, it is immediate that $d_I^\ast$ refines $\tau_I^\ast$, so all we need to verify is that $d_I^\ast$ is lower semi-continuous. If $x,y \in X_I$ and $d(x,y) > r > 0$, then since $X_I$ is a topometric space, there are open sets $U \ni x$ and $V \ni y$ such that $\inf\{d(x',y') : x' \in U,~y'\in V\} > r$. The only other case to check is $x \in X_I$ and $\infty$. Find $j \in I$ such that $x \in \int_{\tau_I}X_j$. Since $X_j$ is crisply embedded, we have that for any $x' \in \int_{\tau_I}X_j$ and any $y' \in X^\ast_I \setminus X_j$, $d(x',y') = 1$. Note that since $X_j$ is compact and $X_I$ is Hausdorff, $X_j$ is a closed compact set and so $X^\ast_I\setminus X_j$ is $\tau_I^\ast$-open. Since we can do this for any $x \in X_I$, we have that $d(x,y)$ is lower semi-continuous and hence $X_I$ has a crisp one-point compactification.
\end{proof}

\cref{lem:topo-technical} is far from optimal, but it is unclear how far the techniques in this section can go, so we have not put much effort into sharpening it.

We now have the tools we need to build certain special lattices of definable sets.

\subsection{Successor ordinals}
\label{sec:succ-ord}

Here we will build type spaces in which the semilattices of definable sets correspond to arbitrary successor ordinals. % 

\begin{prop}\label{prop:ordinal-type-spaces}
  For any ordinal $\alpha$, there is a stable theory $T$ in a language of cardinality $\aleph_0 + |\alpha|$ such that the semilattice of definable subsets of $S_1(T)$ is isomorphic to $\alpha+1$. Furthermore, the same is true of the reverse order $(\alpha+1)^\ast$.
\end{prop}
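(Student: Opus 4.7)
The plan is to construct, for each ordinal $\alpha$, a compact topometric space with open metric whose semilattice of definable subsets is isomorphic to $\alpha+1$ (respectively $(\alpha+1)^\ast$), and then invoke \cite[Thm.\ 5.2]{2021arXiv210613261H} to realize it as $S_1(T)$ for a (strictly) stable theory $T$. The construction proceeds by transfinite recursion on $\alpha$, using $\AND$ as the building block and the crisp one-point compactification at limit stages.

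For $\alpha+1$, I define $X_\beta$ for $\beta \leq \alpha$ together with a distinguished crisply embedded ``top vertex'' $x_\beta \in X_\beta$. Base: $X_1$ is a single point $x_0$. Successor: $X_{\beta+1}$ is obtained from $X_\beta \oplus \AND$ by soldering both input vertices of the new $\AND$ copy to $x_\beta$, with $x_{\beta+1}$ declared to be the new output vertex. Limit $\lambda$: take the directed limit of $(X_\beta)_{\beta<\lambda}$ under the natural inclusions and form its crisp one-point compactification $X_\lambda$, with $x_\lambda$ the added point. Using \cref{lem:topo-technical} I check that the directed system is crisp (each $X_\beta$ is crisply embedded in every $X_{\beta'}$ for $\beta \leq \beta'$, since every attachment is done by soldering crisp vertices) and eventually open (each point of the limit lies in the interior of some $X_\beta$, as successor steps attach new material only at the current top), so the limit is a topometric space admitting a crisp one-point compactification; \cref{lem:compactification-of-open-is-open} and \cref{lem:soldering}(2,4) propagate the open-metric property through the recursion.

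By transfinite induction on $\beta$ I show that the definable subsets of $X_\beta$ form a chain of order type $\beta+1$. At a successor step, \cref{lem:soldering}(3) together with \cref{prop:AND-function} (in particular its ``In particular'' clause, which forbids definable sets containing the output of an $\AND$ copy without at least one input) shows that attaching a new $\AND$ extends the chain by exactly one new definable set, namely the whole new space $X_{\beta+1}$. At a limit step, \cref{lem:directed-system-definable-set-characterization} identifies the definable sets of the uncompactified directed limit with compatible families of definable sets from each $X_\beta$, yielding a chain of order type $\lambda$. Then the co-compactness condition of \cref{prop:compactification-definable-set-characterization} forces the only additional definable set after compactification to be $X_\lambda$ itself, extending the chain to order type $\lambda+1$.

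For $(\alpha+1)^\ast$, I run the symmetric construction: at each successor step, solder the \emph{output} vertex of the new $\AND$ copy to $x_\beta$ and declare $x_{\beta+1}$ to be one of the (merged together) input vertices. This reverses the implication enforced by the gate, so the vertex subsets of definable sets are now upward-closed tails ($x_\beta \in D \To x_{\beta+1}\in D$) rather than initial segments, and after the crisp one-point compactification at each limit the chain acquires a top element above an $\omega^\ast$-type descending sequence---the characteristic shape of $(\alpha+1)^\ast$. In both cases the construction uses at most $\aleph_0 + |\alpha|$ copies of $\AND$, each contributing countably many metric predicates, so the theory produced by \cite[Thm.\ 5.2]{2021arXiv210613261H} is in a language of cardinality $\aleph_0 + |\alpha|$. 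The hardest part will be the limit-step bookkeeping: verifying that the uncompactified directed limit carries precisely the expected chain of definable sets (with no spurious families arising from incompatible limit behavior) and then that the crisp one-point compactification adds exactly one new definable set in each case. Both rely on the co-compactness clause of \cref{prop:compactification-definable-set-characterization} ruling out proper intermediate definable sets containing $x_\lambda$.
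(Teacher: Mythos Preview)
Your approach is essentially the paper's: build $X_\alpha$ by transfinite recursion using copies of $\AND$ with the two inputs soldered together (what the paper calls $\AND^\dagger$), attach at successor stages and take crisp one-point compactifications at limit stages, then invoke \cite[Thm.\ 5.2]{2021arXiv210613261H}; for $(\alpha+1)^\ast$ reverse the orientation of the gate. The lemmas you cite are exactly the ones the paper uses.

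One piece of bookkeeping needs correcting before the induction goes through. At a successor step the chain does grow by one, but the ``new'' definable set is \emph{not} $X_{\beta+1}$ sitting atop the old chain. The set $X_\beta$ itself is \emph{not} definable in $X_{\beta+1}$: it contains the merged input vertex $x_\beta$ of the new gate, so by \cref{lem:soldering}(3) any definable set containing $x_\beta$ must meet the new $\AND^\dagger$ in a nonempty definable subset. What actually happens is that $X_\beta$ is replaced by $D_\beta \coloneqq X_\beta \cup E_\beta$ (where $E_\beta$ is the unique nontrivial definable subset of $\AND^\dagger$, which contains the input but not the output), and $X_{\beta+1}$ is added on top. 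Similarly, at a limit stage the uncompactified directed limit already has a chain of order type $\lambda+1$ (the whole space is definable); compactification does not add a set but rather replaces the now non-closed $X_{<\lambda}$ by $X_\lambda$. Once you track $D_\beta$ rather than $X_\beta$ as the $\beta$th element of the chain, the paper's argument and yours coincide.
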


\begin{proof}
  Let $\AND^\dagger$ be $\AND$ with its input vertices soldered together. Refer to the soldered point as $g \in \AND^\dagger$. Let $E$ be the unique non-empty definable proper subset of $\AND^\dagger$ (i.e., the set corresponding to $\{\langle x,y \rangle : y \neq 0\}\cup \{\langle 0,0 \rangle\}$ in $\AND$).

  Let $(X_0,\tau_0,d_0)$ be the one-point topometric space, and let $x_0$ be the unique element of $X_0$. Let $f_{00}:X_0\to X_0$ be the identity map. Note that $d_0$ is open.

  For each successor ordinal $\alpha+1$, given a point $x_\alpha \in X_\alpha$ and the directed system of compact topometric spaces $(X_\beta,\tau_\beta,d_\beta)_{\beta\leq \alpha}$ (and $(f_{\beta\gamma})_{\gamma\leq \gamma \leq \alpha}$ where $X_\beta$ has an open metric for every $\beta \leq \alpha$), let $(X_{\alpha+1},\tau_{\alpha+1},d_{\alpha+1})$ be $X_{\alpha}$ and a new copy of $\AND^\dagger$ soldered together at $x_\alpha \in X_\alpha$ and $g \in \AND^\dagger$. Write $\AND^\dagger_\alpha$ for this copy as a subset of $X_{\alpha+1}$, and write $E_\alpha$ for $\AND^\dagger_\alpha$'s copy of $E$. Let $f_{\alpha,\alpha+1} : X_\alpha \to X_{\alpha+1}$ be the natural inclusion map of $X_\alpha$ into $X_{\alpha+1}$. For each $\beta \leq \alpha$, let $f_{\beta,\alpha+1} : X_\beta \to X_{\alpha+1}$ be $f_{\alpha,\alpha+1}\circ f_{\beta\alpha}$. Note that by \cref{lem:soldering} parts 2 and 4, the metric $d_{\alpha+1}$ is open as well. Furthermore, $X_{\alpha+1}$ is clearly compact.

  For a limit ordinal $\lambda$, given $(X_\beta)_{\beta < \lambda}$ and $(f_{\beta\gamma})_{\beta \leq \gamma < \lambda}$, we need to argue that this is a crisp and eventually open directed system of topometric spaces. Crispness is obvious from \cref{defn:soldering}. Furthermore, we clearly have that for any $\beta\leq \gamma < \lambda$, $f_{\beta\gamma}[X_{\beta+1}\setminus \{x_{\beta+1}\}]$ is an open set containing $X_{\beta}$, so the system is eventually open. Therefore by \cref{lem:topo-technical}, $\lim_{\beta<\lambda}(X_\beta,\tau_\beta,d_\beta)$ is a topometric space with an open metric and a crisp one-point compactification. Let $(X_\lambda,\tau_\lambda,d_\lambda)$ be the crisp one-point compactification. By \cref{lem:compactification-of-open-is-open}, $d_\lambda$ is an open metric. For any $\beta < \lambda$, let $f_{\beta\lambda}: X_\beta \to X_\lambda$ be the natural inclusion map produced by composing the inclusion from $X_\beta$ into $\lim_{\gamma<\lambda}X_\gamma$ and the inclusion from $\lim_{\gamma < \lambda}X_\gamma$ into $X_\lambda$. 

\begin{figure}
  \centering
  \begin{tikzpicture}
    \draw (7,1) node[circle,fill=black,scale=0.6]{};
    \draw (7,-1) node[circle,fill=black,scale=0.6]{};
    \foreach \n in {1,2,...,10}{
      \begin{scope}[shift={({10*(1-sqrt(1/\n))},{1})},scale={10*(sqrt(1/\n) - sqrt(1/(\n+1)))/1.7}]
        \draw[very thick] (0.7,0.3) -- (1,0) -- (1.3,-0.3) -- (0.7,-0.3) -- (1.3,0.3) -- cycle;
        \draw[very thick,line cap=round] (0.7,0.3) .. controls ++(-0.5,0) and ++(0,0) .. (0,0);
        \draw[very thick,line cap=round] (0.7,-0.3) .. controls ++(-0.5,0) and ++(0,0) .. (0,0);
        \draw[very thick] (1,0) -- (1.7,0); % 
    \end{scope}
    \begin{scope}[shift={({10*(1-sqrt(1/(\n+1))},{-1})},scale={-10*(sqrt(1/\n) - sqrt(1/(\n+1)))/1.7}]
      \draw[very thick] (0.7,0.3) -- (1,0) -- (1.3,-0.3) -- (0.7,-0.3) -- (1.3,0.3) -- cycle;
      \draw[very thick,line cap=round] (0.7,0.3) .. controls ++(-0.5,0) and ++(0,0) .. (0,0);
      \draw[very thick,line cap=round] (0.7,-0.3) .. controls ++(-0.5,0) and ++(0,0) .. (0,0);
      \draw[very thick] (1,0) -- (1.7,0); % 
    \end{scope}
    }
  \end{tikzpicture}
  \caption{$\omega + 1$ and $(\omega+1)^\ast$}
  \label{fig:omega-p-1}
\end{figure}
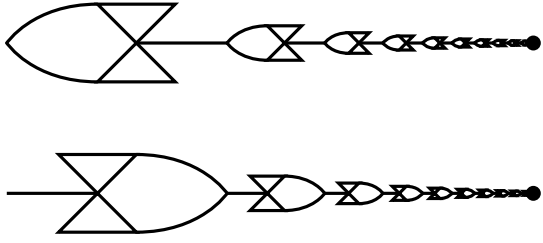

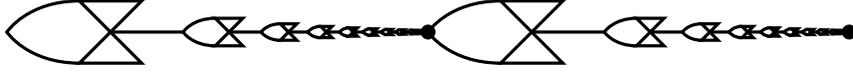
\begin{figure}
  \centering
  \begin{tikzpicture}[scale=0.8]
    \draw (7,1) node[circle,fill=black,scale=0.6]{};
    \foreach \n in {1,2,...,10}{
      \begin{scope}[shift={({10*(1-sqrt(1/\n))},{1})},scale={10*(sqrt(1/\n) - sqrt(1/(\n+1)))/1.7}]
        \draw[very thick] (0.7,0.3) -- (1,0) -- (1.3,-0.3) -- (0.7,-0.3) -- (1.3,0.3) -- cycle;
        \draw[very thick,line cap=round] (0.7,0.3) .. controls ++(-0.5,0) and ++(0,0) .. (0,0);
        \draw[very thick,line cap=round] (0.7,-0.3) .. controls ++(-0.5,0) and ++(0,0) .. (0,0);
        \draw[very thick] (1,0) -- (1.7,0); % 
    \end{scope}
  }
  \begin{scope}[shift={(7,0)}]
    \draw (7,1) node[circle,fill=black,scale=0.6]{};
    \foreach \n in {1,2,...,10}{
      \begin{scope}[shift={({10*(1-sqrt(1/\n))},{1})},scale={10*(sqrt(1/\n) - sqrt(1/(\n+1)))/1.7}]
        \draw[very thick] (0.7,0.3) -- (1,0) -- (1.3,-0.3) -- (0.7,-0.3) -- (1.3,0.3) -- cycle;
        \draw[very thick,line cap=round] (0.7,0.3) .. controls ++(-0.5,0) and ++(0,0) .. (0,0);
        \draw[very thick,line cap=round] (0.7,-0.3) .. controls ++(-0.5,0) and ++(0,0) .. (0,0);
        \draw[very thick] (1,0) -- (1.7,0); % 
      \end{scope}
    }
  \end{scope}

  \end{tikzpicture}
  \caption{$\omega + \omega + 1$}
  \label{fig:omega-p-omega-p-1}
\end{figure}
  
  Now for any ordinal $\alpha$, consider $(X_\alpha,\tau_\alpha,d_\alpha)$. We have by induction that this is a compact topometric space with an open metric. (Note that if a directed system of topometric spaces has a last element, then it is trivially eventually open, so \cref{lem:topo-technical} applies even at successor stages.) We need to argue that the partial order of definable subsets of $X_\alpha$ is order-isomorphic to $\alpha+1$. For $\beta < \alpha$, we will regard $X_\beta$ as a subset of $X_\alpha$. For any $\beta<\alpha$, consider the set $D_\beta \coloneqq X_\beta \cup E_\beta$. For $\gamma \leq \beta$, $D_\beta \cap X_\gamma = X_\gamma$, which is definable in $X_\gamma$. For $\gamma = \beta+1$, $D_\beta \cap X_\gamma$ is definable by \cref{lem:soldering}. We now need to argue that $D_\beta \cap X_\gamma = D_\beta$ is definable in $X_\gamma$ for each $\gamma > \beta+1$ by induction. If $D_\beta \cap X_\gamma$ is definable in $X_\gamma$, then $D_\beta \cap X_{\gamma+1} = D_\beta$ is definable in $X_{\gamma+1}$ by \cref{lem:soldering}. For a limit $\lambda > \beta+1$, we have that $D_\beta$ is definable in $\lim_{\gamma < \lambda}X_\gamma = \bigcup_{\gamma < \lambda}X_\gamma$ by \cref{lem:directed-system-definable-set-characterization}. Therefore $D_\beta$ is definable in $X_\lambda$ by \cref{prop:compactification-definable-set-characterization}. Therefore, by induction, we have that $D_\beta$ is definable in $X_\alpha$. 

  It is clear that if $\beta < \gamma < \alpha$, then $D_\beta \subset D_\gamma$. Furthermore $D_\beta \notin \{\varnothing,X_\alpha\}$ for every $\beta < \alpha$. Therefore the family of definable sets $\{\varnothing\}\cup\{D_\beta : \beta < \alpha\} \cup \{X_\alpha\}$ has order type $1+\alpha+1 = \alpha+1$. 

  Now finally, we just need to argue that these are the only definable subsets of $X_\alpha$. Let $D \subseteq X_\alpha$ be a non-empty definable set that is not $X_\alpha$. Let $\beta < \alpha$ be the smallest such that $\AND^\dagger_\beta \not \subseteq D$. By Lemmas~\ref{lem:soldering} and \ref{lem:directed-system-definable-set-characterization}, it must be the case that either $D \cap \AND^\dagger_\beta$ is empty or $D\cap \AND^\dagger_\beta = E_\beta$. Since $\AND^\dagger_\gamma \subseteq D$ for every $\gamma < \beta$ and since $D$ is closed, it must be the case that $D$ contains $x_\beta \in \AND^\dagger_\beta$, so the first case cannot happen and it must be that $D\cap \AND^\dagger_\beta = E_\beta$. Now, for the sake of contradictions, assume that there is a $\gamma \in (\beta,\alpha)$ such that $D \cap \AND^\dagger_\gamma \neq \varnothing$. Let $\gamma$ be the least such. By Lemmas~\ref{defn:soldering} and \ref{lem:directed-system-definable-set-characterization}, it must be the case that $x_\gamma \in D$. We know that $x_{\beta+1} \notin D$, so it must be the case that $\gamma > \beta+1$. If $\gamma$ is a successor ordinal, then we must have that $D \cap \AND^\dagger_{\gamma - 1} \neq \varnothing$, which is a contradiction. Therefore we must have that $\gamma$ is a limit ordinal. But now $D\cap X_\gamma$ contains $x_\gamma$ (the point at infinity in $X_\gamma$) as an isolated point, which is impossible by \cref{prop:compactification-definable-set-characterization}. Therefore no such $\gamma$ can exist. Therefore $D = D_\beta$. Since we can do this for every non-empty, proper definable subset $D$, we have that the semilattice of definable subsets of $X_\alpha$ is order-isomorphic to $\alpha+1$. The result then follows by \cite[Thm.\ 5.2]{2021arXiv210613261H}. 

  For the reverse order, perform the above construction with the orientation of $\AND^\dagger$ reversed. (See \cref{fig:omega-p-1}.) We will write $X_\alpha^\ast$, $x_\alpha^\ast$, $\AND^{\dagger\ast}_\alpha$, and $E_\alpha^\ast$ for the corresponding objects in this construction. (In particular, note that for any $\alpha$, $x_\alpha^\ast$ is the element of $\AND^{\dagger\ast}_\alpha$ corresponding to the point $\langle 2,0 \rangle$ in $\AND$.) Now, fix an $\alpha$ and for any $\beta < \alpha$, write $D^\ast_{\beta}$ for the set $(X^\ast_\alpha \setminus X^\ast_\beta)\cup E^\ast_\beta$. By essentially the same argument as before, we have that $D^\ast_\beta$ is a definable subset of $\alpha$ for any $\beta < \alpha$. Furthermore each $D^\ast_\beta$ is neither empty nor all of $X_\alpha$ and for any $\beta < \gamma < \alpha$, $D_\beta \supset D_\gamma$, so we have that there is a family of definable sets of order type $1+\alpha^\ast + 1 = (\alpha +1)^\ast$ in $X^\ast_\alpha$.

  Again we need to argue that every definable $D \subseteq X^\ast_\alpha$ is either $\varnothing$, $X^\ast_\alpha$, or $D^\ast_\beta$ for some $\beta < \alpha$. Suppose that $D$ is a non-empty definable proper subset of $X^\ast_\alpha$. Let $\beta < \alpha$ be the smallest such that $D \cap \AND^{\dagger \ast}_\beta \neq \varnothing$. If $\beta$ is a successor ordinal, it is immediate that $D \cap \AND^\ast_\beta = E^\ast_\beta$ (otherwise $D$ would contain $x^\ast_\beta$ and so $D \cap \AND^{\delta\ast}_{\beta -1}$ would be non-empty). Suppose that $\beta$ is a limit ordinal and that $\AND^{\dagger\ast}_\beta \cap D \neq E^\ast_\beta$. It must be the case that $x^\ast_\beta \in D$, but this implies that $D \cap X^\ast_\beta = \{x^\ast_\beta\}$, which is not a definable subset of $X^\ast_\beta$. This contradicts \cref{prop:compactification-definable-set-characterization}. Therefore we must have that $D \cap \AND^{\dagger \ast}_\beta = E^\ast_\beta$. Now suppose for the sake of contradiction that there is a $\gamma \in (\beta,\alpha)$ such that $\AND^{\dagger \ast}_\gamma \not \subseteq D$. Let $\gamma$ be the least such. Since it is the least, we must have that $x^\ast_\gamma \in D$, but by \cref{lem:soldering} and  Propositions~\ref{prop:AND-function} and \ref{prop:compactification-definable-set-characterization}, this implies that $\AND^{\dagger\ast}_\gamma \subseteq D$, which is a contradiction. Therefore no such $\gamma$ can exist and we have that $D = D^\ast_\beta$, as required. Since we can do this for any non-empty definable proper subset $D\subset X^\ast_\alpha$, we have that the semilattice of definable sets in $X^\ast_\alpha$ is order-isomorphic to $(\alpha+1)^\ast$. The result again follows by \cite[Thm.\ 5.2]{2021arXiv210613261H}. 

  To get the cardinality bound on the language of the theory $T$, note that a basic inductive argument shows that for any $\alpha$, $X_\alpha$ has a base of cardinality at most $\aleph_0+|\alpha|$. This implies that there is a reduct $T_0$ of $T$ in a language of cardinality at most $\aleph_0+|\alpha|$ such that $S_1(T_0)$ and $S_1(T)$ are isometrically homeomorphic. The argument for the reverse order case is the same.  
\end{proof}

It is possible to solder the type spaces in \cref{fig:omega-p-1} together in such a way that the resulting semilattice of definable sets is isomorphic to $1+\Zb + 1$. We will not write this out explicitly however, as we prove a more general statement in \cref{sec:comp-lat}.

\subsection{A semilattice that is not a lattice}
\label{sec:semi-not-lat}

In this section we will give an example of a type space in which the semilattice of definable sets is not a lattice, which, while an expected phenomenon, is seemingly a bit hard to come by.

Recall that in a semilattice $L$, an \emph{exact pair} above an ideal $I$ is a pair of elements $a$ and $b$ such that $I = \{x \in L : x \leq a\wedge x \leq b\}$. If $I$ has no largest element, this state of affairs entails that $\{a,b\}$ does not have a greatest lower bound.

Let $X_\omega$ and $x_\omega$ be as in the proof of \cref{prop:ordinal-type-spaces}. Solder three copies of $\AND$ to $x_\omega$ in the configuration seen in \cref{fig:exact-pair} and call the resulting space $Y$. It is straightforward to verify that the lattice of definable subsets of $(Y\setminus X_\omega)\cup \{x_\omega\}$ is isomorphic to the two-element Boolean algebra. Let $A$ and $B$ be the two non-empty proper definable subsets of $(Y\setminus X_\omega)\cup \{x_\omega\}$.

\begin{prop}\label{prop:semi-not-lat}
  The non-empty definable proper subsets of $Y$ are precisely $D_\alpha$ for $\alpha < \omega$ (as defined in the proof of \cref{prop:ordinal-type-spaces}), $X_\omega\cup A$, and $X_\omega \cup B$. In particular, $X_\omega \cup A$ and $X_\omega \cup B$ form an exact pair above the ideal $\{\varnothing\}\cup\{D_\alpha : \alpha < \omega\}$ and so have no meet. 
\end{prop}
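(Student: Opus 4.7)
The plan is to mirror the classification argument from the proof of \cref{prop:ordinal-type-spaces}, carefully accounting for the three additional copies of $\AND$ soldered at $x_\omega$. First I would verify that each set in the list ($D_\alpha$ for $\alpha<\omega$, $X_\omega\cup A$, and $X_\omega\cup B$) is definable in $Y$; then I would classify all non-empty proper definable subsets of $Y$ by a case split on whether or not they contain $x_\omega$; and finally I would read off the exact-pair conclusion.

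For definability, each $D_\alpha$ ($\alpha<\omega$) is a definable subset of $X_\omega$ avoiding $x_\omega$, so the first bullet in part~3 of \cref{lem:soldering} applies at each of the three solderings and $D_\alpha$ remains definable in $Y$. For $X_\omega\cup A$ (and symmetrically $X_\omega\cup B$), its intersection with $X_\omega$ is all of $X_\omega$ (definable in $X_\omega$), while its intersection with the three-gate subspace $(Y\setminus X_\omega)\cup\{x_\omega\}$ is $A$ (definable there by hypothesis). Combining via the second bullet in part~3 of \cref{lem:soldering} at each soldering step yields definability of $X_\omega\cup A$ in $Y$.

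For the converse, suppose $D\subseteq Y$ is a non-empty proper definable set. If $x_\omega\notin D$, then $D\cap X_\omega$ is a definable subset of $X_\omega$ missing $x_\omega$, so by \cref{prop:ordinal-type-spaces} it is either $\varnothing$ or some $D_\alpha$; meanwhile $D$ intersected with each of the three soldered $\AND$-copies is a definable subset of $\AND$ missing the soldered vertex, which by \cref{prop:AND-function} together with inspection of the configuration in \cref{fig:exact-pair} is forced to be $\varnothing$. Hence $D=\varnothing$ or $D=D_\alpha$. If instead $x_\omega\in D$, then $D\cap X_\omega$ is a definable subset of $X_\omega$ containing $x_\omega$, which by \cref{prop:ordinal-type-spaces} must be all of $X_\omega$; and $D\cap((Y\setminus X_\omega)\cup\{x_\omega\})$ is a definable subset of that subspace containing $x_\omega$, hence (by the assumed classification of definable subsets of the three-gate subspace) equals $A$, $B$, or the full subspace, giving $D\in\{X_\omega\cup A,\,X_\omega\cup B,\,Y\}$.

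For the exact-pair conclusion: the classification shows that the definable subsets of $Y$ contained in both $X_\omega\cup A$ and $X_\omega\cup B$ are precisely $\varnothing$ together with the $D_\alpha$ for $\alpha<\omega$. In particular, $X_\omega$ itself is not a definable subset of $Y$, since its restriction to any of the three $\AND$-copies would be the singleton at the soldered vertex, which is not among the definable subsets listed in \cref{prop:AND-function}. Because the chain $\{D_\alpha:\alpha<\omega\}$ has no largest element, $\{X_\omega\cup A,X_\omega\cup B\}$ has no meet, and they form the claimed exact pair above the ideal $\{\varnothing\}\cup\{D_\alpha:\alpha<\omega\}$. The main obstacle I anticipate is the non-$x_\omega$ case of the converse: one must verify that the particular three-gate configuration of \cref{fig:exact-pair} leaves no room for a definable subset of $Y$ confined to the $\AND$-copies but missing $x_\omega$, which depends on the tight combinatorics of \cref{prop:AND-function} (the restriction that the output vertex cannot be excluded when both input vertices are excluded) applied to the chosen wiring.
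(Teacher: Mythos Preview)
Your proposal is correct and follows essentially the same approach as the paper: both classify the definable subsets of $X_\omega$ via \cref{prop:ordinal-type-spaces}, classify those of the three-gate subspace $(Y\setminus X_\omega)\cup\{x_\omega\}$ (the paper phrases the key fact as ``any non-empty definable subset contains $x_\omega$,'' which is exactly your $x_\omega\notin D$ case), and then combine via \cref{lem:soldering}. Your explicit case split on $x_\omega\in D$ is just a reorganization of the paper's argument, and the obstacle you flag at the end is precisely the step the paper dismisses as ``straightforward to verify.''
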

\begin{proof}
  By the same reasoning as in the proof of \cref{prop:ordinal-type-spaces}, we can establish that $D_\alpha$ is definable in $Y$ for any $\alpha < \omega$. Furthermore, it follows from \cref{lem:soldering} that $X_\omega \cup A$ and $X_\omega \cup B$ are definable.

  It follows from the proof of \cref{prop:ordinal-type-spaces} that the only sets $D \subseteq X_\omega$ that are definable in $X_\omega$ are $\varnothing$, $X_\omega$, and the sets $\{D_\alpha: \alpha < \omega\}$. 
  
  It is straightforward to verify that if $D\subseteq (Y\setminus X_\omega)\cup \{x_\omega\}$ is non-empty and definable in $(Y\setminus X_\omega)\cup \{x_\omega\}$, then $x_\omega \in D$.

  Therefore, by \cref{lem:soldering}, we have that the definable subsets of $Y$ are precisely $\varnothing$, $Y$, $\{D_\alpha: \alpha < \omega\}$, $X_\omega \cup A$, and $X_\omega\cup B$, and the result follows.
\end{proof}

We have included this proposition not because it resolves an outstanding question about stable theories in continuous logic,\footnote{Even in the strongly minimal theory of $\Rb$ with the metric $d(x,y) = \min(|x-y|,1)$, the semilattice of definable sets fails to be a lattice. It is possible to concoct two definable sets $D_0$ and $D_1$ in $S_1(\Rb)$ satisfying that $D_0 \cap D_1$ is $\Zb$ together with the strongly minimal type. This is not definable but every finite subset of $\Zb$ is, which establishes that $D_0$ and $D_1$ have no meet.} but more because it shows that the techniques presented here can be used to build semilattices that are not lattices. Note that every other semilattice of definable sets presented in this paper is actually a complete lattice.

\begin{figure}
  \centering
  \begin{tikzpicture}
      \draw (7,0) node[circle,fill=black,scale=0.55]{};
      \foreach \n in {1,2,...,10}{
        \begin{scope}[shift={({10*(1-sqrt(1/\n))},{0})},scale={10*(sqrt(1/\n) - sqrt(1/(\n+1)))/1.7}]
          \draw[very thick] (0.7,0.3) -- (1,0) -- (1.3,-0.3) -- (0.7,-0.3) -- (1.3,0.3) -- cycle;
          \draw[very thick,line cap=round] (0.7,0.3) .. controls ++(-0.5,0) and ++(0,0) .. (0,0);
          \draw[very thick,line cap=round] (0.7,-0.3) .. controls ++(-0.5,0) and ++(0,0) .. (0,0);
          \draw[very thick] (1,0) -- (1.7,0); % 
        \end{scope}
      }
      \foreach \s in {-0.7,0.7}{
        \begin{scope}[shift={(7,0)}]
          \begin{scope}[xscale=0.7,yscale=\s,rotate=-90]
            \coordinate (root) at (0,0);
            \coordinate (LRN) at (1.75,1.25); % 
            \coordinate (URN) at (1.75,3.25); % 
            \coordinate (LN) at (-2,2.25); % 
            
            \draw[draw=none] (LN) -- (0,0) coordinate[pos=0.5] (LL) -- (LRN) coordinate[pos=0.6] (LR) -- (URN) coordinate[pos=0.55] (UR);

            \coordinate (mid) at (0,1.9);
            \draw[very thick] (mid) -- (0,0);
            
            \begin{scope}[shift={(LL)},rotate=135,scale=1.1]
              \draw[very thick] (-0.3,0.3) -- (0,0) -- (0.3,-0.3) -- (-0.3,-0.3) -- (0.3,0.3) -- cycle;
              \draw[very thick] (-0.3,0.3) .. controls ++(-0.3,0) and ($ (root) + (0.2,-0.1) $) .. (root);
              \draw[very thick] (-0.3,-0.3) .. controls ++(-0.5,0) and ($ (root) + (0.11,0) $) .. (root);
            \end{scope}
            
            \begin{scope}[shift={(mid)},rotate=-90,xscale=1.1,yscale=0.9]
              \draw[very thick] (-0.3,0.3) -- (0,0) -- (0.3,-0.3) -- (-0.3,-0.3) -- (0.3,0.3) -- cycle;
              \draw[very thick] (-0.3,-0.3) .. controls ++(-0.6,-0.6) and ($ (LN) + (0,-0.1) $) .. (LL);
            \end{scope}
          \end{scope}
        \end{scope}
      }
  \end{tikzpicture}
  \caption{An exact pair above $\omega$}
  \label{fig:exact-pair}
\end{figure}
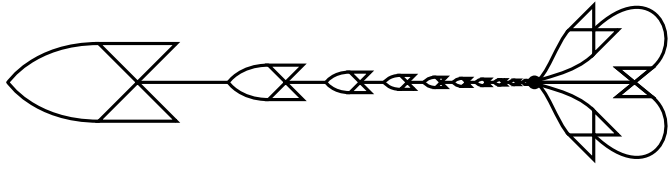

\subsection{Lattices of filters on countable meet-semilattices}% 
\label{sec:comp-lat}

Here we will show that for any countable meet-semilattice $(L,\wedge)$, there is a type space $S_1(T)$ whose join-semilattice of definable sets is isomorphic to the lattice of filters on $L$ (i.e., upwards-closed sets closed under meets).\footnote{In fact, it will be the case that arbitrary unions of definable subsets of $S_1(T)$ are themselves definable, which is not a direct consequence of the semilattice of definable sets being complete. Consider, for example, a discrete theory $T$ with the property that $S_1(T)$ is extremally disconnected. The Boolean algebra of clopen subsets of $S_1(T)$ is complete, but not all unions of clopen sets are clopen, because not all open sets are closed.} This is in some sense an extension of \cref{thm:fin-lat-type-space} (and in particular \cref{lem:lattice-circuit} part 2), but as with the rest of the results in this section, we are only able to build a stable theory.

The lattices of filters in countable meet-semilattices are the same thing as the complete lattices with countable meet-dense subsets.\footnote{A set $A \subseteq L$ is \emph{meet-dense} if every element of $L$ is the meet of some (possibly infinite or empty) subset of $A$.} This includes any countable complete lattice, such as $1+\Zb  +1$ and the Rieger--Nishimura lattice (i.e., the free Heyting algebra over one generator), and many partial orders familiar from analysis, such as $([0,1],\leq)$ and the Boolean algebra of measurable subsets of $[0,1]$ modulo Lebesgue measure $0$.

There are five steps to the construction.
\begin{enumerate}
\item We will build a non-compact, locally compact topometric space $\Y_0(L)$ (with an open metric) whose semilattice of definable sets is the required lattice. This space will be locally compact and have a continuous function $\nu$ to $\Rb_{\geq 0}$ with the property that for every $r > 0$, $\nu^{-1}[[0,r]]$ is compact. Every element $a$ of $L$ will correspond to an open subset $U_a$ of $\Y_0(L)$, which will satisfy $a < b \To U_a \supset U_b$. This will be conceptually similar to the construction in \cref{sec:building-fin-lat}, but we will need to stretch out the nodes corresponding to each element to avoid soldering infinitely many wires to a single point. A filter $F$ will map to the definable set $\Y_0(L) \setminus \bigcap_{a \in F}U_a$.
\item We add two additional copies $\Y_1(L)$ and $\Y_2(L)$ of $\Y_0(L)$ which take turns getting arbitrarily close to $\Y_0(L)$ in the limit as $\nu(x) \to \infty$. While one copy is close to $\Y_0(L)$, the other will retreat to a distance of $1$, so that we can safely solder points together in it without spoiling the openness of the metric.
\item We periodically solder points together in each of the two extra copies of $\Y_0(L)$ (in positions where they have retreated to a distance of $1$) in order to `short circuit' the behavior of definable sets in $\Y_1(L)$ and $\Y_2(L)$. In particular, for any definable set $D$ and either $i \in \{1,2\}$, either $\Y_i(L) \subseteq D$ or $\Y_i(L) \cap D = \varnothing$. 
\item We solder points in $\Y_1(L)$ and $\Y_2(L)$ to some point in $\Y_0(L)$ corresponding to $L$'s bottom, $0^L$. This will ensure that any non-empty definable subset of the space contains all of $\Y_1(L)$ and $\Y_2(L)$. 
\item We take the crisp one-point compactification of the space, adding the point $\infty$. Any non-empty definable set will necessarily contain $\infty$. We then argue that the semilattice of definable sets is unchanged.
\end{enumerate}

In particular, we should note that this does not give us a general method of embedding a non-compact, locally compact topometric space into a compact topometric space while preserving the semilattice of definable sets, as our method will rely heavily on the special form of $\Y_0(L)$.

\subsubsection{Step 1: The space $\Y_0(L)$}
\label{sec:Y-0-L}

\begin{figure}
  \centering
  \begin{tikzpicture}[yscale=0.5]
    \foreach \i in {0,...,5}{
      \draw (\i,\i) node[left,scale=0.75,shift={(-0.1,0)}]{$\Xi(\i)$} -- (10,\i);
    }

    \foreach \i in {0,1,2,3}{
      \foreach \j in {-0.2,0,0.2}{
        \draw ({6+(\i*4/3)+\j*(1-(\i/3))},{6 + \j}) node[circle,fill=black,scale=0.15]{};
      }
    }

    \foreach \i in {0,...,5}{
      \foreach \j in {-0.2,0,0.2}{
        \draw ({10.5+(\j*0.5)},{\i}) node[circle,fill=black,scale=0.15]{};
      }
    }

    \begin{scope}[yscale=-1,shift={(0,-1)}]
    \draw (1,1) coordinate (s) node[circle,fill=black,scale=0.15]{} -- ++(-0.125,-0.25) -- ++(0,-0.5) -- ++(0.25,0.5) coordinate[pos=0.5] (d) coordinate[pos=1] (e) -- (s);% 
    \draw (e) -- ++(0,-0.5) -- ++(-0.25,0.5);
    \draw (1,0) node[circle,fill=black,scale=0.15]{} -- (d);
    \draw (1.125,0.5) node[right,scale=0.75]{$\AND(0,0,1)$};
  \end{scope}

  \begin{scope}[yscale=-1,shift={(6,-1)}]
    \draw (1,1) coordinate (s) node[circle,fill=black,scale=0.15]{} -- ++(-0.125,-0.25) -- ++(0,-0.5) -- ++(0.25,0.5) coordinate[pos=0.5] (d) coordinate[pos=1] (e) -- (s);% 
    \draw (e) -- ++(0,-0.5) -- ++(-0.25,0.5);
    \draw (d) -- ++(0,-0.4);
    \foreach \i in {0,...,3}{
      \draw ({1},{0.1-\i}) arc[start angle = 90,delta angle = -180,x radius = 0.05,y radius = 0.1] -- ++(0,-0.8);
    }
    \draw (1,-3.9) arc[start angle = 90,delta angle = -180,x radius = 0.05,y radius = 0.1] -- ++(0,-0.4);
    \draw ({1-0.125},{0.5}) node[left,scale=0.75]{$\AND(0,0,7)$};
  \end{scope}

    \draw (3,3) node[circle,fill=black,scale=0.15]{} -- (3,2.5) -- (3.35,2.5) -- ++(0,-0.4) arc[start angle = 90,delta angle = -180,x radius = 0.05,y radius = 0.1]  -- ++(0,-0.4) node[right,scale=0.75]{$\AND(1,2,3)$} -- (3.125,1.5);
    \draw (3,1) node[circle,fill=black,scale=0.15]{} -- (3,1.25) -- (3.25,1.75) -- (3,1.75);
    \draw (3,2) node[circle,fill=black,scale=0.15]{} -- (3,1.75) -- (3.25,1.25) -- (3,1.25);

    \draw (5,5) node[circle,fill=black,scale=0.15]{} -- (5,4.5) -- ++(0.35,0) -- ++(0,-0.4) arc[start angle = 90,delta angle = -180,x radius = 0.05,y radius = 0.1] -- ++(0,-0.15) coordinate (a) -- ++(-0.25,-0.5) coordinate[pos=0.5] (c) -- ++(0.25,0) coordinate (b);
    \draw[draw=none] (a) -- (b) node[midway,right,scale=0.75]{$\AND(5,2,4)$};
    \draw (5,2) node[circle,fill=black,scale=0.15]{} -- ++(0,0.5) -- ++(0.35,0) -- ++(0,0.4) arc[start angle = -90,delta angle = 180,x radius = 0.05,y radius = 0.1] -- (b) -- ++(-0.25,0.5) -- (a);
    \draw (c) -- (5,3.5) -- (5,4) node[circle,fill=black,scale=0.15]{};

    \draw (8.35,5.5) -- ++(0,-0.4) arc[start angle = 90,delta angle = -180,x radius = 0.05,y radius = 0.1] -- ++(0,-0.8) arc[start angle = 90,delta angle = -180,x radius = 0.05,y radius = 0.1] -- ++(0,-0.15) coordinate (aa) -- ++(-0.25,-0.5) coordinate[pos=0.5] (cc) -- ++(0.25,0) coordinate (bb);
    \draw[draw=none] (aa) -- (bb) node[midway,right,scale=0.75]{$\AND(1,8,4)$};
    \draw (8,1) node[circle,fill=black,scale=0.15]{} -- ++(0,0.5) -- ++(0.35,0) -- ++(0,0.4) arc[start angle = -90,delta angle = 180,x radius = 0.05,y radius = 0.1] -- ++(0,0.8) arc[start angle = -90,delta angle = 180,x radius = 0.05,y radius = 0.1] -- (bb) -- ++(-0.25,0.5) -- (aa);
    \draw (cc) -- (8,3.5) -- (8,4) node[circle,fill=black,scale=0.15]{};

  \end{tikzpicture}
  \caption{$\Y_0(L)$ with some copies of $\AND$ shown}
  \label{fig:Y-0-L}
\end{figure}

\begin{defn}
  We write $\Xi$ for the set $\{\langle x,y \rangle \in \Rb \times \Nb : x \geq y\}$, which we regard as a topometric space with the induced topology and a discrete metric.

  Let $(L,\wedge)$ be a countable meet-semilattice with a given enumeration $(\ell_n)_{n<\omega}$. We will always assume that $\ell_0 = 0^L$ (i.e., the bottom element of $L$). We write $\Y_0(L)$ for the topometric space consisting of $\Xi$ together with, for each triple $\langle a,b,c\rangle \in \Nb^3$ satisfying $\ell_a \wedge \ell_b \leq \ell_c$, a copy of $\AND$ with the input vertices soldered to $\langle \max(a,b,c),a \rangle$ and $\langle \max(a,b,c),b \rangle$ and the output vertex soldered to $\langle \max(a,b,c),c \rangle$.

  We write $\Xi(a)$ for the set $\{\langle x,a \rangle : x \geq a\}$, and we write $\AND(a,b,c)$ for the copy of $\AND$ in $\Y_0(L)$ corresponding to $(a,b,c)$, provided that it exists. (See \cref{fig:Y-0-L}.)

  For any filter $F \subseteq L$, we write $\Db_0(F)$ for the subset of $\Y_0(L)$ that is the union of
  \begin{itemize}
  \item $\Xi(a)$ for each $a$ with $\ell_a \notin F$ and
  \item for each triple $\langle a_0,a_1,a_2 \rangle$ with $\ell_{a_0} \wedge \ell_{a_1} \leq \ell_{a_2}$, the unique subset of $\AND(a_0,a_1,a_2)$ that is definable in $\AND(a_0,a_1,a_2)$ and that, for each $i<3$, contains the vertex soldered to $\Xi(a_i)$ if and only if $\ell_{a_i} \notin F$.
  \end{itemize}
  
  We write $\nu_L$ for the function from $\Y_0(L)$ to $\Rb_{\geq 0}$ that takes each element $\langle x,y \rangle$ of $\Xi$ to $x$ and each element of each $\AND(a,b,c)$ to $\max(a,b,c)$.
\end{defn}

\begin{defn}\label{defn:crisp-slicing}
  Given a topometric space $X$, a \emph{crisp slicing of $X$} is a continuous function $\nu: X \to \Rb_{\geq 0}$ such that for any $x,y \in X$, if $\nu(x) \neq \nu(y)$, then $d(x,y) = 1$.
\end{defn}

\begin{lem}\label{lem:Y-0-basic}
  Fix a countable meet-semilattice $L$.
  \begin{enumerate}
  \item $\Y_0(L)$ is well defined and is a topometric space with an open metric.
  \item $\Y_0(L)$ is locally compact. In particular, $\nu^{-1}_L[[0,r]]$ is compact for any $r \geq 0$.
  \item For any filter $F \subseteq L$, $\Db_0(F)$ is well defined and closed.
  \item The function $\nu_L:\Y_0(L) \to \Rb_{\geq 0}$ is continuous.
  \item $\nu_L$ is a crisp slicing of $\Y_0(L)$. % 
  \end{enumerate}
\end{lem}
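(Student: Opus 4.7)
The plan is to build $\Y_0(L)$ as the direct limit of an increasing chain of compact ``initial segments'' $W_n$ for $n\in\Nb$, and to read off all five items from this presentation together with \cref{lem:soldering} and \cref{lem:topo-technical}. The space $W_n$ is to consist of the truncated rays $[a,n]\times\{a\}$ for $a\leq n$ together with the copies $\AND(a_0,a_1,a_2)$ of the gate space with $\max(a_0,a_1,a_2)\leq n$, glued together at the prescribed vertex identifications.

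For the construction of $W_n$, first note that each truncated ray $[a,n]\times\{a\}$ with its Euclidean topology and the $\{0,1\}$-valued discrete metric is a compact topometric space with open metric: the discrete metric trivially refines any Hausdorff topology, lower semi-continuity follows from Hausdorffness, and $U^{<r}$ is always either $U$ or the whole ray. Form the coproduct of the finitely many truncated rays and finitely many copies of $\AND$ with $\max\leq n$; by \cref{lem:soldering} parts 1, 2, and 4 this is a compact topometric space with open metric in which every vertex of every copy of $\AND$ and every point of every truncated ray is crisply embedded. For each integer point $\langle m,a\rangle$ with $m\leq n$, the point together with the finitely many vertices of $\AND$'s attached to it forms a finite set of crisply embedded points, and soldering each such set in turn (noting that iteratively soldering pairs of crisply embedded points yields a crisply embedded point at each stage) produces $W_n$ as a compact topometric space with open metric. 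The natural inclusion $W_n\hookrightarrow W_{n+1}$ is a crisp isometric embedding: any $p\in W_n$ and $q\in W_{n+1}\setminus W_n$ sit in disjoint pieces of the coproduct used to build $W_{n+1}$, so $d(p,q)=1$. Moreover, every point $p$ in the colimit has a basic neighborhood---a small interval in its ambient ray together with small $\AND$-neighborhoods at any attached vertices---contained in $W_n$ for any integer $n>\nu_L(p)$, so the system is eventually open. By \cref{lem:topo-technical} the direct limit $\Y_0(L):=\bigcup_n W_n$ is a topometric space with open metric, establishing item (1). Item (2) follows at once: each point has a relatively compact neighborhood inside some $W_n$, and $\nu_L^{-1}[[0,r]]$ is closed in $W_{\lceil r\rceil}$.

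For item (3), the consistency condition of \cref{prop:AND-function} requires that if both input vertices are omitted from an $\AND$-definable subset then so is the output; here this translates to: if $\ell_{a_0},\ell_{a_1}\in F$ then $\ell_{a_2}\in F$. This follows from $\ell_{a_0}\wedge\ell_{a_1}\in F$ (closure of $F$ under meets) together with $\ell_{a_2}\geq\ell_{a_0}\wedge\ell_{a_1}$ (upward closure of $F$), so $\Db_0(F)$ is well-defined. For closedness, by the direct limit topology it suffices to check that $\Db_0(F)\cap W_n$ is closed in each compact $W_n$, which is clear since the intersection is a finite union of truncated rays and closed definable subsets of finitely many copies of $\AND$. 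Item (4) is similar: $\nu_L$ is continuous on each compact $W_n$ because it is continuous on each piece (linear on each ray, constant on each copy of $\AND$) and the values agree at soldered points, whence continuity on $\Y_0(L)$ follows from the direct limit topology. Item (5) is immediate from the crispness established in item (1): if $\nu_L(p)\neq\nu_L(q)$, pick an integer $n$ strictly between them; then $p\in W_n$ and $q\notin W_n$, and crispness of the embedding $W_n\hookrightarrow W_m$ for sufficiently large $m$ forces $d(p,q)=1$.

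The main obstacle is item (1), where the mismatch between the continuum topology on each ray and the discrete metric must be managed carefully via the iterated soldering; once $W_n$ is set up as a compact topometric space with open metric and the system is verified to be crisp and eventually open, the remaining items fall out quickly from the machinery already developed.
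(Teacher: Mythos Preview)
Your approach is essentially the paper's: exhibit $\Y_0(L)$ as the direct limit of a crisp, eventually open system and invoke \cref{lem:soldering} and \cref{lem:topo-technical}. The paper uses the filtration $Y_k = \bigcup_{n<k}\Xi(n)\cup\bigcup\{\AND(a,b,c):a,b,c<k\}$ (full rays, truncated by index), whereas you truncate by $\nu$-value to get compact $W_n$'s; both work, and yours makes item~(2) slightly cleaner.

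There are two small errors in your justifications, though the conclusions survive. First, your crispness argument (``$p\in W_n$ and $q\in W_{n+1}\setminus W_n$ sit in disjoint pieces of the coproduct'') is false as stated: $p=\langle n,0\rangle$ and $q=\langle n+\tfrac12,0\rangle$ lie in the \emph{same} truncated ray $[0,n+1]\times\{0\}$ used to build $W_{n+1}$. Crispness still holds, but only because the metric on $\Xi$ is already discrete. Second, your argument for item~(5) breaks when no integer lies strictly between $\nu_L(p)$ and $\nu_L(q)$ (e.g.\ $\nu_L(p)=2.3$, $\nu_L(q)=2.7$); you cannot then separate $p$ and $q$ by some $W_n$. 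The paper just calls item~(5) ``immediate,'' and indeed it is: the only pairs at distance $<1$ in $\Y_0(L)$ are pairs lying in a single copy of $\AND$, and $\nu_L$ is constant on each such copy.
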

\begin{proof}
  1 follows from Lemmas~\ref{defn:soldering} and \ref{lem:topo-technical}, where we think of $\Y_0(L)$ as the direct limit of the crisp directed system $(\bigcup_{n<k}\Xi(\ell_n)\cup \bigcup\{\AND(a,b,c) : a,b,c<k,~\ell_a\wedge \ell_b \leq \ell_c\})_{k<\omega}$ with the natural inclusion maps.

  2 follows from the fact that for any $r \geq 0$ only finitely many $\Xi(a)$'s and copies of $\AND$ have points $x$ with $\nu(x) \leq r$.

  For 3, we first need to verify that for each $\AND(a,b,c)$, the prescribed definable-in-$\AND(a,b,c)$ set actually exists. Fix $\langle a,b,c \rangle$ with $\ell_a \wedge \ell_b \leq \ell_c$. By \cref{prop:AND-function}, the only restriction on definable subsets of $\AND(a,b,c)$ is that if they do not contain the vertices corresponding to $a$ and $b$, then they must not contain the vertex corresponding to $c$. So suppose that $\ell_a \in F$ and $\ell_b \in F$ (so that the vertices corresponding to $a$ and $b$ in $\AND(a,b,c)$ need to be not contained in $\Db_0(F)$). Then, since $F$ is a filter, $\ell_a \wedge \ell_b \in F$ and so $\ell_c \in F$ as well. Therefore the vertex corresponding to $c$ in $\AND(a,b,c)$ is not contained in $\Db_0(F)$, and the required definable set exists. The resulting set is closed since it is a locally finite union of closed sets.

  4 and 5 are immediate.
\end{proof}

\begin{prop}\label{prop:Y-0-L-defbl-set-char}
  For any countable meet-semilattice $L$ and filter $F \subseteq L$, $\Db_0(F)$ is a definable subset of $\Y_0(L)$. Furthermore, the map $F \mapsto \Db_0(F)$ is a complete lattice isomorphism from the lattice of filters in $L$ (with join taken to be intersection) to the join-semilattice of definable subsets of $\Y_0(L)$.

  Furthermore, the join of any collection of definable subsets of $\Y_0(L)$ is its set-theoretic union.
\end{prop}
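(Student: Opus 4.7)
The plan is to establish definability of every $\Db_0(F)$, then recover a filter $F_D$ from any definable $D$ via a dichotomy on each ``wire'' $\Xi(\ell_n)$, and finally assemble the lattice-theoretic content.

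First, for definability of $\Db_0(F)$, I would present $\Y_0(L)$ as the direct limit of the compact topometric spaces $\Y_0(L)[k] := \nu_L^{-1}([0,k+\tfrac{1}{2}])$ along the natural inclusions. Since $\nu_L$ is a crisp slicing by \cref{lem:Y-0-basic}, the system is crisp, and it is eventually open because $\nu_L^{-1}([0,k+\tfrac{1}{2}))$ is open and exhausts $\Y_0(L)$. Each $\Y_0(L)[k]$ is a finite soldering at crisply embedded vertex points of truncated wires $\Xi(\ell_n)|_k$ (for $n\le k$, with a discrete metric) and finitely many copies of $\AND(a,b,c)$ with $\max(a,b,c)\le k$; each such piece carries an open metric (trivially for discrete metrics and as noted in the paper for $\AND$), so iterating \cref{lem:soldering} gives that $\Y_0(L)[k]$ does as well. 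The intersection $\Db_0(F)\cap\Y_0(L)[k]$ restricts to $E(a,b,c)$ on each $\AND(a,b,c)$ (definable by construction) and to either the whole piece or $\varnothing$ on each $\Xi(\ell_n)|_k$ (automatically definable in a discrete-metric space), so iterating \cref{lem:soldering}(3) yields definability of $\Db_0(F)\cap\Y_0(L)[k]$ in $\Y_0(L)[k]$. Then \cref{lem:directed-system-definable-set-characterization}(2) lifts this to definability in $\Y_0(L)$.

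For surjectivity, fix a definable $D\subseteq\Y_0(L)$ and establish the dichotomy $D\cap\Xi(\ell_n)\in\{\varnothing,\Xi(\ell_n)\}$. Each non-integer open interval $(m,m+1)\times\{n\}\subseteq\Xi(\ell_n)$ is a connected open set of metrically $1$-isolated points, so by \cref{lem:wires} $D$ either contains or misses it entirely. If $D$ contains a vertex point $\langle m,n\rangle$, then for $r<1$ the condition $\langle m,n\rangle\in\int D^{<r}$ yields an open neighborhood inside $D^{<r}$ meeting adjacent non-vertex points of $\Xi(\ell_n)$; since those points are $1$-isolated, lying in $D^{<r}$ forces them to lie in $D$. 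Combining this with closedness propagates containment along the whole wire. Set $F_D := \{\ell_n : D\cap\Xi(\ell_n)=\varnothing\}$; upward closure of $F_D$ follows by applying \cref{prop:AND-function} to the triple $(n,n,m)$ whenever $\ell_n\le\ell_m$, and closure under meets follows from the triple $(a,b,c)$ with $\ell_c=\ell_a\wedge\ell_b$. To see $D=\Db_0(F_D)$ also on each $\AND(a,b,c)$, note that for $r<1$ and any $p\in\AND(a,b,c)$ the full $r$-ball $B_{<r}(p)$ lies in $\AND(a,b,c)$ (non-vertex points are $1$-isolated outside, vertex points are crisply embedded in $\AND$), so $D^{<r}\cap\AND(a,b,c)=(D\cap\AND(a,b,c))^{<r}$ is open; thus $D\cap\AND(a,b,c)$ is definable in $\AND(a,b,c)$, hence determined by its vertex intersections via \cref{prop:AND-function}, which match those of $E(a,b,c)$ by the dichotomy.

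Injectivity of $F\mapsto\Db_0(F)$ is immediate since $F$ is recovered as $F_{\Db_0(F)}$. For completeness and the ``furthermore'' clause, fix any family $(F_i)_{i\in I}$ with join $F=\bigcap_i F_i$ in the filter lattice: both $\Db_0(F)$ and $\bigcup_i\Db_0(F_i)$ contain $\Xi(\ell_n)$ iff some $F_i$ omits $\ell_n$, and on each $\AND(a,b,c)$, since $\AND$ has only finitely many definable subsets which are closed under set-theoretic union (\cref{prop:AND-function}), the set $\bigcup_i(\Db_0(F_i)\cap\AND(a,b,c))$ is definable in $\AND$ with vertex intersection matching $F$, agreeing with $\Db_0(F)\cap\AND(a,b,c)$. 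So $\Db_0(F)=\bigcup_i\Db_0(F_i)$, giving simultaneously the complete lattice isomorphism and, via surjectivity, that arbitrary unions of definable subsets are themselves definable. The main obstacle is the wire dichotomy: \cref{lem:wires} handles only the non-integer portions of $\Xi(\ell_n)$, and one must carefully propagate containment across the integer vertex points where many $\AND$-gates attach, relying on the definability-forced open neighborhood inside $D^{<r}$ together with the $1$-isolation of the immediately adjacent non-vertex wire points.
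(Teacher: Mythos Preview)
Your proposal is correct and follows essentially the same route as the paper: realize $\Y_0(L)$ as a crisp eventually open directed system, apply \cref{lem:soldering} and \cref{lem:directed-system-definable-set-characterization} for definability of $\Db_0(F)$, invoke \cref{lem:wires} and \cref{prop:AND-function} to recover a filter $F_D$ from a definable $D$, and check $\Db_0(\bigcap_i F_i)=\bigcup_i \Db_0(F_i)$ directly.

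The one place where you diverge is in the wire dichotomy, and here the paper is slicker: rather than decomposing $\Xi(a)$ into open intervals and then propagating across the integer vertex points, the paper observes that all of $\Xi(a)$ sits inside a single connected open neighborhood $U$ in which every point is crisply embedded (take $\Xi(a)$ together with short initial segments of the wires of each attached $\AND$-copy; near a vertex, those segments lie in the discrete-metric region of $\AND$). A single application of \cref{lem:wires} to this $U$ gives the dichotomy immediately, and the ``main obstacle'' you flag disappears. Your interval-by-interval propagation is correct but unnecessary. Conversely, your explicit verification that $D\cap\AND(a,b,c)$ is definable in $\AND(a,b,c)$ (via $B_{<r}(p)\subseteq\AND(a,b,c)$ for $r<1$) spells out a step the paper compresses into ``It is also easy to see that $\Db_0(F(D))=D$.''
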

\begin{proof}
  Fix a filter $F$ in $L$. Let $(Y_k)_{k<\omega}$ be the directed system described in the proof of \cref{lem:Y-0-basic}. Since this system is crisp 
  \cref{lem:Y-0-basic}, we just need to verify that $\Db_0(F)\cap Y_k$ is definable for each $k < \omega$, but this is immediate from \cref{lem:soldering} and the fact that $\Xi(a)$ is clopen in $\Xi$.

  By definition, it is clear that $F\mapsto \Db_0(F)$ is order preserving and injective, so to establish that it is a complete lattice isomorphisms, we just need to verify that it is surjective.

  Fix a definable set $D \subseteq \Y_0(L)$. Each $\Xi(a)$ has a connected open neighborhood $U$ such that each $x \in U$ is crisply embedded. Therefore, by \cref{lem:wires}, we have that for each $\ell_a \in L$, either $\Xi(a) \subseteq D$ or $\Xi(a) \cap D = \varnothing$. Let $F(D) = \{\ell_a \in L : \Xi(a) \cap D = \varnothing\}$. We just need to argue that $F(D)$ is a filter and $D = \Db_0(F(D))$. To see that $F(D)$ is a filter, suppose that $\ell_a$ and $\ell_b$ are in $F(D)$. For any $\ell_c \in L$ with $\ell_a \wedge \ell_b \leq \ell_c$, we must have that $D\cap \AND(a,b,c)$ contains neither of its input vertices. Therefore by \cref{lem:soldering} and \cref{prop:AND-function}, $D \cap \AND(a,b,c) = \varnothing$, whereby $D \cap \Xi(c) = \varnothing$ and $\ell_c \in F(D)$. Since we can do this for any $\ell_a,\ell_b\in L$, we have that $F(D)$ is a filter. It is also easy to see that $\Db_0(F(D)) = D$.

  The `Furthermore' statement follows from the fact that clearly for any family $(F_i)_{i\in I}$ of filters in $L$, $\Db_0\left(\bigcap_{i \in I}F_i\right) = \bigcup_{i \in I}\Db_0(F_i)$.
\end{proof}

\subsubsection{Step 2: Taking turns}
\label{sec:taking-turns}

Now we will develop some machinery that we will use in this step of the construction.

\begin{defn}

  Given a topometric space $(X,\tau,d)$ with a crisp slicing $\nu$ and two continuous functions $f_1,f_2 : \Rb_{\geq 0} \to (0,1]$, we write $\Wb(X,\nu,f_1,f_2)$ for the topometric space $X \times 3$ (where $3 = \{0,1,2\}$ has the discrete topology and $X \times 3$ is given the product topology) with the unique metric $d_{\Wb}$ satisfying that
  \begin{itemize}
  \item $d_{\Wb}$ extends $d$ on each copy of $X$,
  \item the function $\nu$ extended to $X\times 3$ by $\nu((x,i)) = \nu(x)$ is a crisp slicing, 
  \item for any $(x,0)$ and $(y,1)$ with $\nu(x)=\nu(y)$, $d_{\Wb}(x,y) = \max(d(x,y),f_1(\nu(x)))$,
  \item for any $(x,0)$ and $(y,2)$ with $\nu(x)=\nu(y)$, $d_{\Wb}(x,y) = \max(d(x,y),f_2(\nu(x)))$, and
  \item for any $(x,1)$ and $(y,2)$ with $\nu(x)=\nu(y)$, $d_{\Wb}(x,y) = \max(d(x,y),\min(f_1(\nu(x))+f_2(\nu(x)),1))$.
  \end{itemize}
\end{defn}

Given any two sets $A$ and $B$ in a metric space $(X,d)$, we'll write $\dinf(A,B)$ for the quantity $\inf\{d(a,b) : a \in A,~b \in B\}$. Recall that the lower semi-continuity condition in the definition of topometric space is equivalent to the following: For any $x,y \in X$ with $d(x,y) > r$, there are open neighborhoods $U \ni x$ and $V \ni y$ such that $\dinf(U,V) > r$.

\begin{lem}
  Let $X$ be a topometric space with a crisp slicing $\nu$ and let $f_1,f_2 : \Rb_{\geq 0} \to (0,1]$ be continuous functions.
  \begin{enumerate}
  \item The metric $d_\Wb$ is well defined.
  \item $\Wb(X,\nu,f_1,f_2)$ is a topometric space.
  \item For both $i \in \{1,2\}$, if $x \in X \times \{i\}$ and $f_i(\nu(x)) = 1$, then $x$ is crisply embedded in $X\times \{i\}$ if and only if it is crisply embedded in $\Wb(X,\nu,f_1,f_2)$.
  \item A closed subset $D \subseteq \Wb(X,\nu,f_1,f_2)$ is definable if and only if $D \cap (X\times \{i\})$ is definable in $X\times \{i\}$ for each $i<3$.
  \item $d_\Wb$ is an open metric if and only if $d$ is an open metric.
  \end{enumerate}
\end{lem}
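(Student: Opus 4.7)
The plan is to handle the five parts in order, leaning throughout on the observation that each slice $X\times\{i\}$ is clopen in $\Wb$ (since the factor $3$ has the discrete topology), and on the lower semi-continuity tool ``$d(x,y) > r$ implies $\dinf(U,V) > r$ for some open $U\ni x$, $V\ni y$.''

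For part 1, symmetry and the bound by $1$ are immediate from the distance formulas, so the work is the triangle inequality. Any pair of points with different $\nu$-values has distance $1$, so the crisp-slicing clause reduces the non-trivial cases to three points all at the same $\nu$-value. A short case analysis on which of the three slices they occupy reduces the inequality to the triangle inequality of $d$ on $X$ paired with the elementary numerical facts $f_i \leq \min(f_1+f_2,1) \leq f_1+f_2$. For part 2, I would verify lower semi-continuity in the neighborhood form. Same-slice pairs inherit it from $X$. Pairs with $\nu(x)\neq\nu(y)$ also inherit from $X$ through the crisp slicing. The remaining case—different slices at the same $\nu$-value—has $d_\Wb = \max(d(x,y), g(\nu(x)))$ for the appropriate continuous $g \in \{f_1, f_2, \min(f_1+f_2,1)\}$; here I combine lower semi-continuity of $d$ on $X$ with continuity of $g\circ\nu$ to build the witnessing neighborhoods. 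Part 3 then follows by directly evaluating the distance formulas when $f_i(\nu(x))=1$: all three cross-slice distances to $(x,i)$ collapse to $1$.

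Part 4 is where I expect the real work, and its converse is the subtlest step. For the easy direction, given $(y,j)\in D$, definability of $D_j \coloneqq D\cap(X\times\{j\})$ in the slice yields an open $U\subseteq X$ with $y\in U$ and $U\times\{j\}\subseteq D_j^{<r}\times\{j\}$; since $U\times\{j\}$ is also $\Wb$-open and $D_j^{<r}\times\{j\}\subseteq D^{<r}$, we get $(y,j) \in \int_\Wb D^{<r}$. For the converse, I fix $(y,j)\in D_j$ and use definability of $D$ in $\Wb$ to get an open $\Wb$-neighborhood of $(y,j)$ inside $D^{<s}$ for arbitrarily small $s>0$. Using $f_i(\nu(y))>0$ and continuity of $f_i\circ\nu$, I shrink this neighborhood, inside the clopen slice $X\times\{j\}$, to a set on which $f_i(\nu(z)) > s$ for both $i\in\{1,2\}$. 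The distance formula then forces the cross-slice contributions to $D^{<s}$ to be vacuous on this shrunk set, so every $z$ in it satisfies $z\in D_j^{<s}$; this yields $y\in\int_X D_j^{<s}$, and part 2 of \cref{fact:def-set-basic} lets me conclude that $D_j$ is definable in $X\times\{j\}$.

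For part 5, the forward direction tests $d_\Wb$ on sets of the form $U\times\{i\}$ for open $U\subseteq X$ and intersects $(U\times\{i\})^{<r}$ with the clopen slice $X\times\{i\}$ to recover $U^{<r}$ in $X$, which must therefore be open. For the converse, I decompose an open $V\subseteq\Wb$ as $\bigsqcup_{i<3} V_i\times\{i\}$ and, for each $j$, expand $V^{<r}\cap(X\times\{j\})$ via the distance formulas: using the crisp slicing to restrict $x$ and $y$ to the same $\nu$-fiber whenever $r \leq 1$, it is a finite union of sets of the form $V_i^{<r}$ intersected with preimages $(g\circ\nu)^{-1}[[0,r))$ for the relevant continuous $g$. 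Each $V_i^{<r}$ is open in $X$ because $d$ is open, and each preimage is open by continuity, so $V^{<r}\cap(X\times\{j\})$ is open in $X\times\{j\}$ for every $j$, whence $V^{<r}$ is open in $\Wb$ (the case $r>1$ is trivial). The main obstacle, as indicated, is the converse in part 4: everything else amounts to unwinding the distance formulas combined with continuity of $\nu$, $f_1$, and $f_2$.
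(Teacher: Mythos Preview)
Your argument is correct and follows essentially the same route as the paper for parts 1, 2, 3, and 5; in particular your set-decomposition in part 5 is just a repackaging of the paper's pointwise verification that each $(x,i)\in U^{<r}$ lies in $\int U^{<r}$.

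The one substantive difference is part 4. The paper disposes of it in a single line (``4 follows from the fact that definability is a local property''), whereas you supply the actual mechanism: for the direction $D$ definable $\Rightarrow$ $D_j$ definable, you use continuity of $f_i\circ\nu$ and strict positivity of $f_i$ to shrink to a neighborhood on which cross-slice distances exceed $s$, forcing any witness for $D^{<s}$ to live in the same slice. This detail is not idle---the bare slogan ``definability restricts to open subspaces'' is not obviously true in an arbitrary topometric space, since $(D\cap Y)^{<r}$ computed in $Y$ can be strictly smaller than $D^{<r}\cap Y$. Your argument is exactly the verification that the paper's one-liner needs in this particular setting, so you should regard your version of part 4 as filling a gap the paper leaves implicit rather than as a genuinely different approach.
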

\begin{proof}
  For 1, since $\nu$ is a crisp slicing and the metric is $[0,1]$-valued, we only need to check the triangle inequality for triples $(x,y,z)$ with $\nu(x)=\nu(y)=\nu(z)$, but this follows easily from the fact that on $\nu^{-1}(r)$, the metric $d_{\Wb}$ is the metric $\max(d,d_3)$, where $d_3$ is the metric on $3$ satisfying $d_3(0,1) = f_1(\nu(x))$, $d_3(0,2) = f_2(\nu(x))$, and $d_3(1,2) = \min(f_1(\nu(x)) + f_2(\nu(x)),1)$. Since $f_1$ and $f_2$ are always positive, this is a metric.

  For 2, the fact that $d_\Wb$ refines the topology is obvious, so we only need to check lower semi-continuity. Fix $(x,i)$ and $(y,j)$ in $\Wb(X,\nu,f_1,f_2)$. If $i = j$, then for any $r < d(x,y)$, we can obviously find open neighborhoods $U \ni x$ and $V \ni y$ with $\dinf(U,V) > r$, since $X$ is a topometric space. So assume that $i \neq j$. If $\nu(x) \neq \nu(y)$, then we can assume that $\nu(x) < s < t < \nu(y)$. We then have $x \in \nu^{-1}[[0,s)]$, $y \in \nu^{-1}[(t,\infty)]$, and $\dinf_\Wb(\nu^{-1}[[0,s)],\nu^{-1}[(t,\infty)]) = 1 > r$, so we have the required neighborhoods. So assume that $\nu(x) = \nu(y)$.

  Let $g(u)$ be $f_1(u)$ if $\{i,j\} = \{0,1\}$, $f_2(u)$ if $\{i,j\} = \{0,2\}$, and $\min(f_1(u)+f_2(u),1)$ if $\{i,j\} = \{1,2\}$. Note that $g(u)$ is always continuous. If $d_\Wb((x,i),(y,j)) = g(\nu(x)) > r$, then by continuity of $\nu$, there will be neighborhoods $U \ni (x,i)$ and $V \ni (y,j)$ with $U\subseteq X \times \{i\}$ and $V \subseteq X \times \{j\}$ such that $\dinf_\Wb(U,V) > r$. If on the other hand, $d_\Wb((x,i),(y,j)) > r \geq g(\nu(x))$, then it must be the case that $d_\Wb((x,i),(y,j)) = d(x,y)$. Find $U,V \subseteq X$ with $x \in U$, $y \in V$, and $\dinf(U,V) > r$. We then clearly have that $\dinf_\Wb(U\times \{i\},V \times \{j\}) > r$, so we are done.

  3 is obvious. 4 follows from the fact that definability is a local property.

  For 5, first note that openness passes to open subspaces, so if $d_\Wb$ is an open metric, then $d$ is an open metric. Now assume that $d$ is an open metric. Fix an open set $U \subseteq \Wb(X,\nu,f_1,f_2)$ and an $r > 0$. Assume that $(x,i) \in U^{<r}$. We need to show that $x \in \int U^{<r}$. This is trivial if $r > 1$, so assume that $r \leq 1$. If there is a $(y,i) \in U$ such that $d(x,y) < r$, then we are done by openness of $d$, so assume that there is no such $(y,i) \in U$. There must be a $(y,j) \in U$ with $j \neq i$ such that $d_\Wb((x,i),(y,j)) < r$. Since $r \leq 1$, this implies that $\nu(x) = \nu(y)$. Let $g(u)$ be defined as before. Note that by definition, we have that $g(\nu(x)) < r\leq 1$. Find an interval $(s,t) \ni \nu(x)$ such that for any $u \in (s,t)$, $g(u) < r$. Now we have that
  \[
    x \in \{(z,i) : (\exists (w,j) \in U)d(z,w) < r\}\cap \nu^{-1}[(s,t)] \subseteq U^{<r}.
  \]
  The set $\{(z,i) : (\exists (w,j) \in U)d(z,w) < r\}$ is open by the openness of $d$, and the set $\nu^{-1}[(s,t)]$ is open by continuity of $\nu$. The fact that their intersection is a subset of $U^{<r}$ is immediate from the definition of $d_\Wb$, so $x \in \int U^{<r}$ as required.
\end{proof}

Fix two continuous functions $h_1,h_2 : \Rb_{\geq 0} \to (0,1]$ satisfying that
\begin{itemize}
\item $h_1(0) = 1$,% 
\item for every odd $n \in \Nb$, $h_1(n) = 1$,% 
\item for every even $n \in \Nb$, $h_2(n) = 1$, and % 
\item for every $x $ with $x \geq 1$, $\min(h_1(x),h_2(x)) = \frac{1}{x}$.
\end{itemize}
It is clear that such functions exist.

\begin{defn}
  We write $\Y_{012}(L)$ for the space $\Wb(\Y_0(L),\nu_L,h_1,h_2)$ (\cref{fig:Y-012-L}). We will identify $\Y_0(L)$ with $\Y_0(L)\times \{0\}$ and we will write $\Y_i(L)$ for $\Y_0(L) \times \{i\}$ for both $i \in \{1,2\}$.
\end{defn}

\begin{figure}
  \centering
  \begin{tikzpicture}
    \draw (0,0) node[left,scale=0.75]{$\Y_0(L)$};
    \foreach \i in {0,...,6}{
      \draw[line width={0.6+0.0*\i}] ({\i},{0}) -- ({\i + 1.5},{0});
    }

    \draw[line width=0.6] (0,1) node[left,scale=0.75]{$\Y_1(L)$} -- (1,1);
    \foreach \i in {1,...,6}{
      \draw[domain=\i:{\i+1.5},variable=\x,line width={0.6+0.0*\i}] plot ({\x},{max(1/\x,sin(90*\x)^20)});
    }
    \draw[line width=0.6] (0,-1) node[left,scale=0.75]{$\Y_2(L)$} -- (1,-1);
    \foreach \i in {1,...,6}{
      \draw[domain=\i:{\i+1.5},variable=\x,line width={0.6+0.0*\i}] plot ({\x},{-max(1/\x,sin(90*(\x+1))^20)});
    }

    \foreach \i in {0}{
      \foreach \j in {-0.2,0,0.2}{
        \draw ({8+(\j*0.5)},{\i}) node[circle,fill=black,scale=0.15]{};
      }
    }

  \end{tikzpicture}
  \caption{$\Y_{012}(L)$}
  \label{fig:Y-012-L}
\end{figure}

\subsubsection{Steps 3-5: Completing the construction}
\label{sec:completing-construction}

\begin{defn}\label{defn:Y-L}
  We write $\Y(L)$ to represent $\Y_{012}(L)$ with the following sets of points soldered together:
  \begin{enumerate}
  \item For each odd $n$, we solder the points $\{(\langle n,m \rangle , 1) : \langle n,m \rangle \in \Xi,~m \leq n\} \subseteq \Y_1(L)$ together (\cref{fig:Y-0}).
  \item For each even $n$, we solder the points $\{(\langle n,m \rangle , 2) : \langle n,m \rangle \in \Xi,~m \leq n\}\subseteq \Y_2(L)$ together.
  \item We solder the points $\{(\langle 0,0 \rangle,i) : i < 3\}$ together. 
  \end{enumerate}
  We write $\pi_{012}$ for the quotient map from $\Y_{012}(L)$ to $\Y(L)$. We also regard $\nu$ as a function on $\Y(L)$, satisfying $\nu(\pi_{012}(x)) = \nu(x)$ for any $x \in \Y_{012}(L)$. 
\end{defn}
Note that the points being soldered are all crisply embedded by the definition of $d_\Wb$ and our choice of $h_1$ and $h_2$. Also note that while we are soldering together infinitely many points, for each $r > 0$, we are only soldering finitely many points in $\nu^{-1}[[0,r]]$. Finally, note that $\nu:\Y(L) \to \Rb_{\geq 0}$ is well defined.

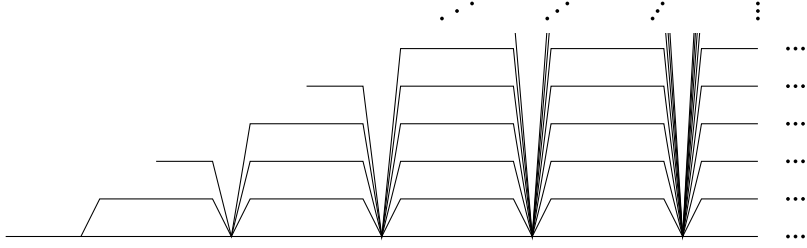
\begin{figure}
  \centering
  \begin{tikzpicture}[yscale=0.5]
    \draw (0,0) -- (10,0);
    \draw (1,0) -- (1.25,1) -- (2.75,1) -- (3,0) -- (3.25,1) -- (4.75 ,1) -- (5,0) -- (5.25,1) -- (6.75,1) -- (7,0) -- (7.25,1) -- (8.75,1) -- (9,0) -- (9.25,1) -- (10,1);
    \draw (2,2) -- (2.75,2) -- (3,0) -- (3.25,2) -- (4.75 ,2) -- (5,0) -- (5.25,2) -- (6.75,2) -- (7,0) -- (7.25,2) -- (8.75,2) -- (9,0) -- (9.25,2) -- (10,2);
    \draw (3,0) -- (3.25,3) -- (4.75 ,3) -- (5,0) -- (5.25,3) -- (6.75,3) -- (7,0) -- (7.25,3) -- (8.75,3) -- (9,0) -- (9.25,3) -- (10,3);
    \draw (4,4) -- (4.75 ,4) -- (5,0) -- (5.25,4) -- (6.75,4) -- (7,0) -- (7.25,4) -- (8.75,4) -- (9,0) -- (9.25,4) -- (10,4);
    \draw (5,0) -- (5.25,5) -- (6.75,5) -- (7,0) -- (7.25,5) -- (8.75,5) -- (9,0) -- (9.25,5) -- (10,5);
    \begin{scope}
      \clip (0,-0.-0.5) rectangle (10,5.4);
      \draw[rounded corners=1] (6.75,6) -- (7,0) -- (7.25,6) -- (8.75,6) -- (9,0) -- (9.25,6);
      \draw[rounded corners=1] (7,0) -- (7.25,7) -- (8.75,7) -- (9,0) -- (9.25,7);
      \draw[rounded corners=1] (8.75,8) -- (9,0) -- (9.25,8);
      \draw[rounded corners=1] (9,0) -- (9.25,9);
    \end{scope}

    \foreach \i in {0,1,2,3}{
      \foreach \j in {-0.2,0,0.2}{
        \draw ({6+(\i*4/3)+\j*(1-(\i/3))},{6 + \j}) node[circle,fill=black,scale=0.15]{};
      }
    }

    \foreach \i in {0,...,5}{
      \foreach \j in {-0.2,0,0.2}{
        \draw ({10.5+(\j*0.5)},{\i}) node[circle,fill=black,scale=0.15]{};
      }
    }

  \end{tikzpicture}
  \caption{The soldering described in \cref{defn:Y-L} part 1}
  \label{fig:Y-0}
\end{figure}

\begin{lem}\label{lem:Y-0-stuff}
  Fix a countable meet-semilattice $L$ with a given enumeration $(\ell_n)_{n<\omega}$ satisfying $\ell_0 = 0^L$.
  \begin{enumerate}
  \item $\Y(L)$ is a well-defined topometric space with an open metric.
  \item A closed set $D \subseteq \Y(L)$ is definable if and only if either $D = \varnothing$ or $D \cap \pi_{012}[\Y_0(L)]$ is definable in $\pi_{012}[\Y_0(L)]$ and $\pi_{012}[\Y_1(L)\cup \Y_2(L)] \subseteq D$.
  \item $\Y(L)$ has a crisp one-point compactification.
  \end{enumerate}
\end{lem}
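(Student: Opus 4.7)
My plan is to realize $\Y(L)$ as the direct limit of a crisp, eventually open system of compact topometric spaces $(X_n)_{n<\omega}$, and then translate the definability question into a question about definable subsets of $\Y_0(L)$ that admits an immediate answer via \cref{prop:Y-0-L-defbl-set-char}.

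\textbf{Parts 1 and 3.} For each $n \in \omega$, set $X_n = \pi_{012}[\nu^{-1}([0,n]) \cap \Y_{012}(L)]$. The preimage is compact by part 2 of \cref{lem:Y-0-basic}, and only finitely many of the solderings of \cref{defn:Y-L} take place inside it---all at points crisply embedded in $\Y_{012}(L)$ by the choice of $h_1,h_2$. Iterating parts 2 and 4 of \cref{lem:soldering} through those finitely many soldering steps shows each $X_n$ is a compact topometric space with an open metric. Since $\nu$ is a crisp slicing, the inclusion $X_m \hookrightarrow X_n$ is crisp for $m \le n$; and every $x \in \Y(L)$ lies in $\nu^{-1}([0,n)) \subseteq \int X_n$ as soon as $n > \nu(x)$, so the system is eventually open. \cref{lem:topo-technical} then delivers Parts 1 and 3 at once.

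\textbf{Part 2.} Combining \cref{lem:directed-system-definable-set-characterization} with iterated applications of part 3 of \cref{lem:soldering} through the finitely many solderings in each $X_n$, and then with the characterization of definable subsets of $\Wb(\cdot)$ as those whose intersection with each copy $X\times\{i\}$ is definable in that copy, one obtains the following translation: a closed $D \subseteq \Y(L)$ is definable if and only if, for each $i < 3$, the saturated slice $A_i := \pi_{012}^{-1}[D] \cap \Y_i(L)$, transported along the canonical isometry $\Y_i(L) \cong \Y_0(L)$, is definable in $\Y_0(L)$.

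By \cref{prop:Y-0-L-defbl-set-char} each $A_i$ takes the form $\Db_0(F_i)$ for a filter $F_i$ on $L$, and the additional constraint is that $\pi_{012}^{-1}[D]$ must be saturated under the identifications of \cref{defn:Y-L}. At each odd $n$ the $\Y_1$-soldering demands that $\{\ell_0, \ldots, \ell_n\}$ lies entirely inside or entirely outside $F_1$; since $\ell_0 = 0^L$ belongs to $F_1$ only when $F_1 = L$, this dichotomy forces $F_1 \in \{\varnothing, L\}$, i.e.\ $A_1 \in \{\Y_0(L), \varnothing\}$, and the same argument in $\Y_2$ yields $A_2 \in \{\Y_0(L), \varnothing\}$. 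The three-way soldering at $\langle 0,0\rangle$ then forces the equivalence $\langle 0,0\rangle \in A_0 \iff A_1 = \Y_0(L) \iff A_2 = \Y_0(L)$. Splitting into the two compatible cases---either all three slices empty (yielding $D = \varnothing$), or $A_1 = A_2 = \Y_0(L)$ with $A_0$ an arbitrary non-empty definable subset of $\Y_0(L)$ (which automatically contains $\langle 0,0\rangle$ by \cref{prop:Y-0-L-defbl-set-char})---recovers precisely the characterization in Part 2.

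The step requiring the most care is the translation itself: running part 3 of \cref{lem:soldering} through the finitely many solderings in each $X_n$ and combining with the $\Wb$ definability criterion to produce three independent definability conditions on the $\Y_i$-slices. Once this translation is in hand, the filter calculation finishing Part 2 follows almost immediately from \cref{prop:Y-0-L-defbl-set-char}.
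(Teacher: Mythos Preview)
Your proof is correct and follows essentially the same approach as the paper: realize $\Y(L)$ as the direct limit of the compact pieces $\{x:\nu(x)\le n\}$ to handle parts 1 and 3 via \cref{lem:topo-technical}, then for part 2 reduce to the three $\Y_i$-slices and use \cref{prop:Y-0-L-defbl-set-char} together with the soldering constraints to pin down $\Y_1$ and $\Y_2$. Your phrasing in terms of filters $F_i$ and the saturation condition is a slightly cleaner packaging of exactly the argument the paper gives in set-theoretic language.
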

\begin{proof}
  1 follows from \cref{lem:soldering} and \cref{lem:topo-technical}, where we regard $\Y(L)$ as the limit of the directed system of topometric spaces $(\{x \in \Y(L) : \nu(x) \leq k\})_{k<\omega}$.

 For 2, we have that if $D \subseteq \Y(L)$ is definable, then by Lemmas~\ref{lem:soldering} and \ref{lem:directed-system-definable-set-characterization}, we have that $D \cap \pi_{012}[\Y_0(L)]$ is definable. By \cref{prop:Y-0-L-defbl-set-char}, we have that if $D \cap \pi_{012}[\Y_0(L)]$ is non-empty, then $\pi_{012}[\Xi(0)] \subseteq D$ (because we have assume that $\ell_0 = 0^L$). On the other hand, the same is true of $\Y_1(L)$ and $\Y_2(L)$ (relative to their copies of $\Xi(0)$), but given the points that have been soldered together in these, we have that if a relatively definable set $D \subseteq \pi_{012}[\Y_i(L)]$ (for some $i \in \{1,2\}$) contains $\Y_i(L)$'s copy of $\Xi(0)$, then it must contain all of $\Y_i(L)$'s copy of $\Xi$ and so by \cref{prop:AND-function}, it must contain all of $\Y_i(L)$. Therefore, since we soldered together points in all three copies of $\Xi(0)$, we have that the following are equivalent:
    \begin{itemize}
    \item $D \subseteq \Y(L)$ is non-empty.
    \item $D \cap \pi_{012}[\Y_i(L)] \neq \varnothing$ for some $i < 3$.
    \item $D \supseteq \pi_{012}[\Xi(0)]$.
    \item $D \supseteq \pi_{012}[\Y_i(L)]$ for some $i \in \{1,2\}$.
    \item $D \supseteq \pi_{012}[\Y_i(L)]$ for both $i \in \{1,2\}$.
    \end{itemize}
    Therefore the required statement follows by \cref{lem:soldering}.

  3 follows from \cref{lem:topo-technical}.
\end{proof}

\begin{thm}\label{thm:semilat-defbl-sets}
  For any countable meet-semilattice $L$, there is a stable theory $T$ in a countable language such that the join-semilattice of definable subsets of $S_1(T)$ is isomorphic to the lattice of filters in $L$. Furthermore, for any family $\mathscr{D}$ of definable subsets of $S_1(T)$, the join of $\mathscr{D}$ is the set-theoretic union.
\end{thm}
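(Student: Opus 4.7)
The plan is to take $X \coloneqq \Y(L)^\ast$, the crisp one-point compactification given by \cref{lem:Y-0-stuff}(3), identify its semilattice of definable sets with the lattice of filters of $L$, and then invoke \cite[Thm.~5.2]{2021arXiv210613261H}. By \cref{lem:Y-0-stuff}(1) and \cref{lem:compactification-of-open-is-open}, $X$ is a compact topometric space with an open metric, so the cited theorem realizes $X$ topometrically as $S_1(T)$ for some stable theory $T$. A routine base-counting argument in the style of the end of the proof of \cref{prop:ordinal-type-spaces}, using that $L$ is countable (so only countably many copies of $\AND$ and of $\Xi(a)$ appear), shows that $X$ is second countable, which permits passage to a countable-language reduct of $T$ with the same topometric type space.

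The core technical work lies in classifying the definable subsets of $X$. Applying \cref{prop:compactification-definable-set-characterization}, it suffices to decide which closed $D \subseteq \Y(L)$ are definable and satisfy the cofinality condition at $\infty$. \cref{lem:Y-0-stuff}(2) tells us that any non-empty definable $D$ contains $\pi_{012}[\Y_1(L) \cup \Y_2(L)]$ and restricts to a definable set on $\pi_{012}[\Y_0(L)]$, which by \cref{prop:Y-0-L-defbl-set-char} must be $\Db_0(F)$ for a unique filter $F \subseteq L$. The shared basepoint identified by \cref{defn:Y-L}(3) forces $\Db_0(F)$ to meet $\Xi(0)$, hence to contain $\Xi(0)$ entirely by \cref{lem:wires}; this is equivalent to $\ell_0 = 0^L \notin F$, and since $0^L$ lies in a filter only when that filter is all of $L$, it amounts to $F \neq L$. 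Conversely, for every filter $F \neq L$ the set $D_F \coloneqq \Db_0(F) \cup \pi_{012}[\Y_1(L) \cup \Y_2(L)]$ is closed and definable in $\Y(L)$, and $D_F \cup \{\infty\}$ is definable in $X$: for the compactness clause of \cref{prop:compactification-definable-set-characterization}, the functions $h_1,h_2$ satisfying $\min(h_1(u),h_2(u)) = 1/u$ for $u \geq 1$ guarantee that every $x \in \pi_{012}[\Y_0(L)]$ with $\nu(x) > 1/r$ has a same-level point of $\pi_{012}[\Y_1(L) \cup \Y_2(L)] \subseteq D_F$ within metric distance $1/\nu(x) < r$, so $\Y(L) \setminus D_F^{<r}$ sits inside the compact set $\nu^{-1}[[0, 1/r]]$; openness of the metric and \cref{fact:def-set-basic}(3) then replace $D_F^{<r}$ by $\int D_F^{<r}$.

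Assembled, the assignment $F \mapsto D_F \cup \{\infty\}$ for $F \neq L$, together with $L \mapsto \varnothing$, is an order-reversing bijection between the lattice of filters of $L$ and the semilattice of definable subsets of $X$, under which union of definable sets matches intersection of filters. For the ``furthermore'' clause, note that the intersection of any family of filters is again a filter, so for any family $(D_i)_{i\in I}$ of definable subsets of $X$ the set-theoretic union $\bigcup_{i \in I} D_i$ is itself definable and is therefore the join in the semilattice. I expect the main obstacle to be the cofinality/compactness verification in the middle paragraph, where the ``taking turns'' design of $\Y_{012}(L)$ via $h_1$ and $h_2$ does its essential work; the remainder is a straightforward assembly of \cref{lem:Y-0-stuff,prop:Y-0-L-defbl-set-char,prop:compactification-definable-set-characterization} with the cited \cite[Thm.~5.2]{2021arXiv210613261H}.
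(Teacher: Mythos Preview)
Your proposal is correct and follows essentially the same approach as the paper. You are slightly more explicit in a couple of places---isolating the condition $F \neq L$ via the soldered basepoint, and invoking \cref{fact:def-set-basic}(3) to identify $D_F^{<r}$ with $\int D_F^{<r}$---but the overall architecture (form $\Y(L)^\ast$, classify its definable sets via \cref{lem:Y-0-stuff}(2) and \cref{prop:Y-0-L-defbl-set-char}, verify the co-compactness clause of \cref{prop:compactification-definable-set-characterization} using the $h_1,h_2$ design, then apply \cite[Thm.~5.2]{2021arXiv210613261H} and reduce to a countable language) matches the paper exactly.
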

\begin{proof}
  Fix an enumeration $(\ell_n)_{n<\omega}$ of $L$ with $\ell_0 = 0^L$. Let $\Y^\ast(L)$ be the crisp one-point compactification of $\Y(L)$ (computed with the enumeration $(\ell_n)_{n<\omega}$), which exists by \cref{lem:Y-0-stuff} part 3. Let $\infty$ denote the point at infinity in $\Y^\ast(L)$.

  Let $D \subseteq \Y^\ast(L)$ be a definable set. Since no non-empty definable subsets of $\Y(L)$ are compact (as they must contain all of $\pi_{012}[\Xi(0)]$), we have by \cref{prop:compactification-definable-set-characterization} that if $D$ is non-empty, it must contain $\infty$ and must have that $D \cap \Y(L)$ is definable in $\Y(L)$.

  Now let $D \subseteq \Y(L)$ be a non-empty definable subset of $\Y(L)$. We need to argue that $D \cup \{\infty\}$ is a definable subset of $\Y^\ast(L)$. Clearly $(D\cup \{\infty\})\cap \Y(L)$ is definable in $\Y(L)$, so by \cref{prop:compactification-definable-set-characterization}, we just need to verify that $\Y^\ast(L)\setminus (\int D^{<r})$ is compact for every $r > 0$. By \cref{lem:Y-0-stuff} part 2, $D$ contains all of $\pi_{012}(\Y_1(L)\cup\Y_2(L))$. By the definition of the metric, we have that for any $x \in \Y_{012}(L)$ with $\nu(x) > \frac{1}{r}$, there is a $y \in \Y_1(L) \cup \Y_2(L)$ such that $d(\pi_{012}(x),\pi_{012}(y)) < r$. Therefore, if $x \in \Y(L)$ has $x \notin D^{<r}$, then $\nu(x) \leq \frac{1}{r}$. The set of such $x$'s is compact, so we have that $\Y(L) \setminus (\int D^{<r})$ is compact. Since we can do this for any $r > 0$, we have that $D\cup \{\ast\}$ is definable in $\Y^\ast(L)$ by \cref{prop:compactification-definable-set-characterization}.

  Therefore the semilattices of definable sets in $\Y(L)$ and $\Y^\ast(L)$ are isomorphic. The fact that arbitrary joins are set-theoretic unions follows immediately from \cref{prop:Y-0-L-defbl-set-char}.

  Finally, $\Y^\ast(L)$ is compact and has an open metric by \cref{lem:compactification-of-open-is-open}. Therefore by \cite[Thm.\ 5.2]{2021arXiv210613261H}, there is a stable theory $T$ such that $S_1(T)$ is isometrically homeomorphic to $\Y^\ast(L)$. Since $\Y^\ast(L)$ has a countable topological base, we can find a countable reduct $T'$ of $T$ with the same space of $1$-types if necessary. Therefore we may assume that $T$ has a countable language.
\end{proof}

\bibliographystyle{plain}
\bibliography{../ref}

\end{document}